\titleformat{\subsection}{\normalfont\bfseries}{\thesubsection}{1em}{}
\DeclareSymbolFont{cyrletters}{OT2}{wncyr}{m}{n}
\DeclareMathSymbol{\Sha}{\mathalpha}{cyrletters}{"58}
\titleformat{\subsection}[runin]{\normalfont\bfseries}{\thesubsection}{1em}{}
\titleformat{\subsubsection}[runin]{\normalfont\bfseries}{\thesubsubsection}{1em}{}
\theoremstyle{plain}
\newtheorem{theorem}[equation]{Theorem}
\newtheorem{lemma}[equation]{Lemma}
\newtheorem{proposition}[equation]{Proposition}
\newtheorem{conjecture}[equation]{Conjecture}
\theoremstyle{definition}
\newtheorem{definition}[equation]{Definition}
\newtheorem{remark}[equation]{Remark}
\newtheorem{set-up}[equation]{Set-up}
\newcommand{\IA}{\mathbb{A}}
\newcommand{\IC}{\mathbb{C}}
\newcommand{\ID}{\mathbb{D}}
\newcommand{\IE}{\mathbb{E}}
\newcommand{\IF}{\mathbb{F}}
\newcommand{\IG}{\mathbb{G}}
\newcommand{\IK}{\mathbb{K}}
\newcommand{\IQ}{\mathbb{Q}}
\newcommand{\IR}{\mathbb{R}}
\newcommand{\IS}{\mathbb{S}}
\newcommand{\IU}{\mathbb{U}}
\newcommand{\IZ}{\mathbb{Z}}
\newcommand{\sK}{\mathcal{K}}
\newcommand{\shG}{\mathscr{G}}
\newcommand{\End}{\mathrm{End}}
\newcommand{\tr}{\mathrm{tr}}
\newcommand{\Res}{\mathrm{Res}\,}
\renewcommand{\deg}{\mathrm{deg}}
\newcommand{\Spec}{\rm Spec\,}
\newcommand{\ch}{\rm ch} 
\newcommand\iso{\,{\cong}\,} 
\newcommand\tensor{{\otimes}}
\newcommand{\<}{\langle}
\renewcommand{\>}{\rangle}
\newcommand{\into}{\hookrightarrow}
\def\d/{/\mspace{-6.0mu}/}
\def\wt{\widetilde}
\def\what{\hat}
\newcommand{\sH}{\mathcal{H}}
\newcommand{\w}{\omega}
\newcommand{\Ohm}{\Omega}
\newcommand{\fl}{\mathrm{fl}}
\newcommand{\Pic}{\mathrm{Pic}\,}
\newcommand{\sM}{\mathcal{M}}
\newcommand{\sO}{\mathcal{O}}
\newcommand{\Cl}{\mathrm{Cl}}
\newcommand{\Gal}{\mathrm{Gal}}
\newcommand{\NS}{\mathrm{NS}}
\newcommand{\Hdg}{\mathrm{Hdg}}
\newcommand{\Isom}{\mathrm{Isom}}
\newcommand{\et}{\text{\'et}}
\newcommand{\Sh}{\mathrm{Sh}}
\newcommand{\shS}{\mathscr{S}}
\newcommand{\CSpin}{\mathrm{CSpin}}
\newcommand{\SO}{\mathrm{SO}}
\newcommand{\ad}{\mathrm{ad}}
\renewcommand{\sp}{\mathrm{sp}}
\newcommand{\GSp}{\mathrm{GSp}}
\newcommand{\GL}{\mathrm{GL}}
\newcommand{\sA}{\mathcal{A}}
\newcommand{\Br}{\mathrm{Br}}
\newcommand{\cris}{\mathrm{cris}}
\newcommand{\Mod}{\mathsf{M}}
\newcommand{\MT}{\mathrm{MT}}
\newcommand{\dR}{\mathrm{dR}}
\newcommand{\ab}{\mathrm{ab}}
\newcommand{\shA}{\mathscr{A}}
\newcommand{\ur}{\mathrm{ur}}
\newcommand{\an}{\mathrm{an}}
\newcommand{\Def}{\mathrm{Def}}
\newcommand{\F}{\mathrm{Fil}}
\newcommand{\Cris}{\mathrm{Cris}}
\newcommand{\Fr}{\mathrm{Fr}}
\newcommand{\Fil}{\mathrm{Fil}}
\newcommand{\BR}{\what{\mathrm{Br}}}
\newcommand{\Nm}{\mathrm{Nm}}
\newcommand{\sX}{\mathcal{X}}
\newcommand{\bH}{\mathbf{H}}
\newcommand{\bP}{\mathbf{P}}
\newcommand{\bL}{\mathbf{L}}
\newcommand{\isog}{\rightsquigarrow}
\newcommand{\shD}{\mathscr{D}}
\newcommand{\val}{\mathrm{val}}
\newcommand{\fq}{\mathfrak{q}}
\renewcommand{\Spec}{\mathrm{Spec\,}}
\newcommand{\bd}{\boldsymbol{\delta}}
\newcommand{\bdeta}{\boldsymbol{\beta}}
\newcommand{\bpi}{\boldsymbol{\pi}}
\newcommand{\bxi}{\boldsymbol{\xi}}
\newcommand{\fK}{\mathsf{K}}
\newcommand{\sD}{\mathcal{D}}
\begin{document}

\title{Isogenies between K3 Surfaces over $\bar{\IF}_p$}
\date{}
\author{Ziquan Yang}

\maketitle
\begin{abstract}
We generalize Mukai and Shafarevich's definitions of isogenies between K3 surfaces over $\IC$ to an arbitrary perfect field and describe how to construct isogenous K3 surfaces over $\bar{\IF}_p$ by prescribing linear algebraic data when $p$ is large. The main step is to show that isogenies between Kuga-Satake abelian varieties induce isogenies between K3 surfaces, in the context of integral models of Shimura varieties. As a byproduct, we show that every K3 surface of finite height admits a CM lifting under a mild assumption on $p$.
\end{abstract}

\maketitle
\section{Introduction}
Efforts to define the notion of isogeny between K3 surfaces have a long history. We refer the reader to \cite{Morrison} for a summary. Shafarevich defined an isogeny between two complex algebraic K3 surfaces $X, X'$ to be a Hodge isometry $H^2(X', \IQ) \stackrel{\sim}{\to} H^2(X, \IQ)$ (cf. \cite{Sha}). Mukai reserved the term for those Hodge isometries that are induced by correspondences (cf. \cite{Mukai}). Thanks to a recent result of Buskin \cite[Theorem 1.1]{Buskin}, now we know these two definitions coincide. Therefore, we make the following definition, which generalizes Mukai and Shafarevich's:
\begin{definition}
\label{isogdef} Let $k$ be a perfect field with algebraic closure $\bar{k}$. Let $X, X'$ be two K3 surfaces over $k$ with a correspondence (i.e., algebraic cycles on $X \times X'$ over $k$ with $\IQ$-coefficients) $f : X \isog X'$. 
\begin{itemize}
\item If $\mathrm{char\,} k = 0$, we say $f$ is an \textit{isogeny} over $k$ if the induced map $H^2_\et(X'_{\bar{k}}, \IA_f) \stackrel{\sim}{\to} H^2_\et(X_{\bar{k}}, \IA_f)$ is an isometry.
\item If $\mathrm{char\,} k = p$, we say $f$ is an \textit{isogeny} over $k$ if the induced maps $H^2_\et(X'_{\bar{k}}, \IA_f^p) \stackrel{\sim}{\to} H^2_\et(X_{\bar{k}}, \IA^p_f)$ and $H^2_\cris(X'/W(k))[1/p] \stackrel{\sim}{\to} H^2_\cris(X/W(k))[1/p]$ are isometries.
\end{itemize}
Two K3 surfaces $X$ and $X'$ over $k$ are said to be isogenous if there exists an isogeny between them. Two isogenies are viewed as equivalent if their cohomological realizations mentioned above all agree. 
\end{definition} 

Starting with a complex algebraic K3 surface, one can easily construct an isogenous one by prescribing a different $\IZ$ lattice in its rational Hodge structure. More precisely, using Buskin's result, the surjectivity of the period map and \cite[IV Theorem 6.2]{Barth}, one easily deduces the following:

\begin{theorem}
\label{teaser}
Let $X$ be a complex algebraic K3 surface. Let $\Lambda$ be a quadratic lattice isomorphic to $H^2(X, \IZ)$. Then for each isometric embedding $\Lambda \subset H^2(X, \IQ)$, there exists another complex algebraic K3 surface $X'$ together with an isogeny $f : X \isog X'$ such that $f^* H^2(X', \IZ) = \Lambda$. 
\end{theorem}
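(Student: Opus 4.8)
The plan is to transport $\Lambda$, equipped with the integral Hodge structure it inherits as a full-rank sublattice of $H^2(X,\IQ)$, onto the second cohomology of a new K3 surface by invoking surjectivity of the period map, to verify that this new surface is algebraic, and finally to realize the resulting rational Hodge isometry by a correspondence via Buskin's theorem.

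First I would note that $\Lambda\otimes\IQ=H^2(X,\IQ)$ forces $\Lambda\otimes\IC=H^2(X,\IC)$, so $\Lambda$ inherits a weight-two integral Hodge structure with $\dim\Lambda^{2,0}=1$, and its underlying lattice is isometric to the K3 lattice $L_{K3}$ by hypothesis. Fixing a marking $\psi\colon\Lambda\xrightarrow{\sim}L_{K3}$, the line $\psi_\IC(\Lambda^{2,0})$ is a point $\omega$ of the period domain $\Omega\subset\IP(L_{K3}\otimes\IC)$, the defining bilinear relations being inherited from those holding for $H^{2,0}(X)\subset H^2(X,\IC)$. Surjectivity of the period map for K3 surfaces (a priori in the analytic category) then yields a K3 surface $X'$ with a marking $\phi\colon H^2(X',\IZ)\xrightarrow{\sim}L_{K3}$ and period $\omega$, and $g:=\phi^{-1}\circ\psi\colon\Lambda\xrightarrow{\sim}H^2(X',\IZ)$ is a lattice isometry carrying $\Lambda^{2,0}$ to $H^{2,0}(X')$; since the $(2,0)$-part pins down the whole Hodge decomposition, $g$ is an isomorphism of integral Hodge structures.

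Next I would check that $X'$ is projective — the one step that is not purely formal. Since $X$ is projective, $H^{1,1}(X)\cap H^2(X,\IZ)$ contains a class $h$ with $h^2>0$; because $\Lambda$ and $H^2(X,\IZ)$ are commensurable lattices in $H^2(X,\IQ)$, some positive multiple $Nh$ lies in $\Lambda$, and it is still an integral $(1,1)$-class with $(Nh)^2>0$. Its image $g(Nh)\in H^2(X',\IZ)$ is then an integral $(1,1)$-class of positive self-intersection, hence (using $H^1(X',\sO_{X'})=0$) the first Chern class of a line bundle $L$ on $X'$ with $L^2>0$, so $X'$ is projective by \cite[IV Theorem 6.2]{Barth}.

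Finally, tensoring $g^{-1}$ with $\IQ$ produces a Hodge isometry $\Psi\colon H^2(X',\IQ)\xrightarrow{\sim}H^2(X,\IQ)$ with $\Psi\bigl(H^2(X',\IZ)\bigr)=\Lambda$; by Buskin's theorem \cite[Theorem 1.1]{Buskin} it is induced by a correspondence $f\colon X\isog X'$, i.e.\ $f^*=\Psi$. Compatibility of cycle classes with the Artin comparison isomorphism identifies the map induced by $f$ on $H^2_\et\bigl((-)_{\IC},\IA_f\bigr)\cong H^2\bigl((-)(\IC),\IQ\bigr)\otimes\IA_f$ with $\Psi\otimes\IA_f$, which is an isometry because $\Psi$ is one over $\IQ$; so $f$ is an isogeny in the sense of Definition \ref{isogdef}, and $f^*H^2(X',\IZ)=\Psi\bigl(H^2(X',\IZ)\bigr)=\Lambda$, as desired. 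I expect the projectivity step to be the real content: the period map only delivers an analytic K3 surface, and it is the presence of a positive class in $\Lambda\cap H^{1,1}$ — here guaranteed by the projectivity of $X$ — together with \cite[IV Theorem 6.2]{Barth} that returns us to the algebraic category.
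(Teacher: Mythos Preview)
Your proof is correct and follows exactly the route the paper indicates: surjectivity of the period map to produce $X'$, \cite[IV Theorem 6.2]{Barth} to secure projectivity via an integral $(1,1)$-class of positive square, and Buskin's theorem to realize the resulting rational Hodge isometry by a correspondence. The paper gives only this three-ingredient sketch rather than a written-out argument, so your proposal is in fact a faithful expansion of it.
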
 

We phrase and prove a partial analogue of Theorem~\ref{teaser} for quasi-polarized K3 surfaces in positive characteristic, using K3 crystals in the sense of Ogus and $\what{\IZ}^p$-lattices to replace $\IZ$-lattices. K3 crystals are F-crystals with properties modeled on the second crystalline cohomology of K3 surfaces (cf. \cite[Def.~3.1]{Ogus}):
\begin{definition}
 Let $k$ be a perfect field of characteristic $p > 0$ and let $\sigma$ be the lift of Frobenius on $W(k)$. A \textit{K3 crystal} over $k$ is a finitely generated self-dual quadratic lattice $D$ over $W(k)$ equipped with a $\sigma$-linear injection $\varphi : D \to D$ such that $p^2 D \subset \varphi(D)$, $\mathrm{rank\,} \varphi \tensor k = 1$ and $\varphi$ satisfies the following equation with the symmetric bilinear pairing $\<-, -\>$ on $D$: For any $x, y \in D$, $\< \varphi(x), \varphi(y)\> = p^2 \sigma(\< x, y \>)$. 
\end{definition}

Recall that a quasi-polarized K3 surface is a pair $(X, \xi)$ where $X$ is a K3 surface and $\xi$ is a big and nef line bundle on $X$. We call the self-intersection number of $\xi$ the degree of $(X, \xi)$. Customarily, we write $P^2_*(X) := \< \mathrm{ch_*}(\xi) \>^\perp \subset H^2_*(X)$ for the primitive cohomology of $(X, \xi)$ ($*= \dR, \cris, \et$, etc). Our main theorem states:

\begin{theorem}
\label{mainCor}
Let $(X, \xi)$ be a quasi-polarized K3 surface over $\bar{\IF}_p$ of degree $2d$. Assume that $p > 18d + 4$. Let $(M_p, \varphi)$ be a K3 crystal over $\bar{\IF}_p$ such that as a quadratic lattice $M_p$ is isomorphic to $P^2_\cris(X/W(\bar{\IF}_p))$ and let $M^p$ be a quadratic lattice isomorphic to $P^2_\et(X, \what{\IZ}^p)$. Then for each pair of isometric embeddings
\begin{enumerate}[label=(\roman*)]
\item $M_p \subset P^2_\cris(X/W(\bar{\IF}_p))[1/p]$ such that $\varphi$ agrees with the Frobenius action
\item $M^p \subset P^2_\et(X, \IA_f^p)$
\end{enumerate}
there exists another quasi-polarized K3 surface $(X', \xi')$ of degree $2d$ over $\bar{\IF}_p$ with an isogeny $f : X \isog X'$ such that $f^*\ch_*(\xi') = \ch_*(\xi)$ for $*= \cris, \et$, $f^* P^2_\cris(X'/W(\bar{\IF}_p)) = M_p$ and $ f^* P^2_\et(X', \what{\IZ}^p) = M^p$.
\end{theorem}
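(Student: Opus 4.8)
The plan is to deduce Theorem~\ref{mainCor} from the paper's main result --- that an isogeny between Kuga--Satake abelian varieties, in the setting of integral models of $\mathrm{GSpin}$ Shimura varieties, induces an isogeny between the associated K3 surfaces --- by manufacturing, out of the prescribed linear-algebraic data, an appropriate isogeny of Kuga--Satake varieties. First I would install the period-theoretic setup: let $L = L_{2d}$ be the quadratic lattice of signature $(19,2)$ attached to degree $2d$, let $(G, X_G)$ be the associated $\mathrm{GSpin}$ Shimura datum with hyperspecial level $K_p$ at $p$ (legitimate since $p \nmid 2d$) and an auxiliary compact open $K^p$ away from $p$, and let $\shS_K$ be the integral canonical model over $\IZ_{(p)}$, carrying the Kuga--Satake abelian scheme $\sA \to \shS_K$ together with the tautological local system $\mathbf{P} \subset \sEnd(\sA)$ of primitive motives and its quadratic form. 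For $p > 18d+4$ the Kuga--Satake period morphism $\iota \colon \shM_{2d} \to \shS_K$ is, after passing to a suitable level cover, an open immersion compatible with all cohomological realizations, so $\iota^{*}\mathbf{P}$ is $P^{2}_{*}$ of the universal K3 surface and $\iota^{*}\sA$ its Kuga--Satake variety for $* = \et$ (away from $p$), $\dR$, $\cris$. Evaluating at $(X,\xi)$ gives a point $s = \iota(X,\xi) \in \shS_K(\bar{\IF}_p)$ with Kuga--Satake variety $A = \sA_s$, and identifies $\mathbf{P}_s$ with $P^{2}_{*}(X)$, with $\ch_{*}(\xi)$ recovered as the distinguished vector spanning the orthogonal complement in $H^{2}_{*}(X)$.

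Next I would translate the two isometric embeddings into an isogeny of Kuga--Satake varieties. Over $\bar{\IF}_p$ there is no Galois action, so an abelian variety within a fixed isogeny class is pinned down by a Frobenius-stable $W(\bar{\IF}_p)$-lattice in its rational crystalline cohomology --- subject to the admissibility $pM \subset FM \subset M$ --- together with $\what{\IZ}^{\ell}$-lattices in its $\ell$-adic cohomologies, equal to the standard ones for almost all $\ell$. By hypothesis (i) the K3 crystal $(M_p,\varphi)$ is exactly a Frobenius-stable $W(\bar{\IF}_p)$-lattice in $P^{2}_{\cris}(X/W(\bar{\IF}_p))[1/p]$, and by (ii) $M^{p}$ is a $\what{\IZ}^{p}$-lattice in $P^{2}_{\et}(X,\IA^{p}_{f})$; pushing these forward through the spin representation produces, by the parametrization just recalled, an abelian variety $A'$ over $\bar{\IF}_p$ and a quasi-isogeny $\psi \colon A \isog A'$ whose realizations restrict on the primitive parts to the prescribed embeddings. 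Because $M_p \cong P^{2}_{\cris}(X/W(\bar{\IF}_p))$ and $M^{p} \cong P^{2}_{\et}(X,\what{\IZ}^{p})$ as quadratic lattices, $\psi$ preserves the spinor-similitude structure and the polarization type, so the tensors defining the $\mathrm{GSpin}$-structure transport from $A$ to $A'$; invoking the stability of $\shS_K$ under the resulting Hecke-type operation (the prime-to-$p$ part coming from $M^{p}$, the $p$-part being controlled precisely because $M_p$ is a K3 crystal whose $\varphi$ matches Frobenius), the pair $(A', \text{tensors})$ defines a point $s' \in \shS_K(\bar{\IF}_p)$ with $\psi$ realizing $s \isog s'$.

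Finally I would invoke the main theorem: applied to the K3 point $s$ and the isogeny $\psi \colon \sA_s \isog \sA_{s'}$ respecting the $\mathrm{GSpin}$-structure, it shows that $s'$ is itself a K3 point, $s' = \iota(X',\xi')$ for a quasi-polarized K3 surface $(X',\xi')$ over $\bar{\IF}_p$, and it produces a correspondence $f \colon X \isog X'$ whose cohomological realizations agree, under $\iota$'s identifications, with those of $\psi$ on the primitive parts and send $\ch_{*}(\xi')$ to $\ch_{*}(\xi)$. Reading this off gives $f^{*}P^{2}_{\cris}(X'/W(\bar{\IF}_p)) = M_p$, $f^{*}P^{2}_{\et}(X',\what{\IZ}^{p}) = M^{p}$, and $f^{*}\ch_{*}(\xi') = \ch_{*}(\xi)$ for $* = \cris,\et$; since $\ch_{*}(\xi')^{2} = \ch_{*}(\xi)^{2} = 2d$ the degree is $2d$, and since the realizations of $\psi$, hence of $f$, are isometries on $H^{2}_{\et}(-,\IA^{p}_{f})$ and $H^{2}_{\cris}(-/W)[1/p]$, the map $f$ is an isogeny in the sense of Definition~\ref{isogdef}.

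I expect the main obstacle to be the middle step: realizing the abstract lattice data by a genuine isogeny of Kuga--Satake abelian varieties that stays inside the integral canonical model. The two delicate points are (a) at $p$, constructing a $p$-divisible group --- equivalently a Dieudonn\'e lattice --- realizing $(M_p,\varphi)$ and compatible with the $\mathrm{GSpin}$-structure, which is exactly where the hypotheses that $M_p$ is a K3 crystal and that $\varphi$ agrees with Frobenius enter, via integral $p$-adic Hodge theory, and where the lower bound on $p$ is spent; and (b) verifying that the point $s'$ produced this way genuinely lies on $\shS_K$, i.e. that the integral model is stable under the relevant Hecke-type operations. Once $s'$ and $\psi$ are in hand, the appeal to the main theorem and the bookkeeping of cohomological identities are routine.
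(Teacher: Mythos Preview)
Your overall architecture is right and matches the paper: realize the prescribed lattices as an isogeny inside the $\CSpin$-isogeny class of the Kuga--Satake point, then invoke Proposition~\ref{main}. But two points are garbled in a way that would leave a real gap.

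First, you misplace the hypothesis $p > 18d+4$. You say it is ``spent'' in step (a), the $p$-adic construction of the Dieudonn\'e lattice from $(M_p,\varphi)$. It is not. That step works for any $p \nmid 2d$, $p \ge 5$: writing $g^{\ad}_p \in G^{\ad}(K_0)$ with $g^{\ad}_p \cdot P^2_\cris(X/W) = M_p$, the K3-crystal axioms on $(M_p,\varphi)$ are exactly what forces $g^{\ad}_p$ to land in the set $X^{\ad}_p$ parametrizing the $p$-part of the isogeny class (this is Lemma~\ref{crysConj}, and is the substantive content you allude to). The bound $p > 18d+4$ is used only at the very end, to know that the period map $\wt{\Mod}_{2d,\fK^{\ad}_p,\IZ_{(p)}} \to \shS_{\fK^{\ad}_p}(G^{\ad},\Omega)$ is \emph{surjective} on $\bar{\IF}_p$-points (Matsumoto). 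The period map is \'etale, not an open immersion; its image is not a priori all of $\shS_{\fK^{\ad}}(G^{\ad},\Omega)$, and without surjectivity you have no way to know that the point $s'$ you built actually comes from a K3 surface.

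Second, and relatedly, your final paragraph reverses the logic. Proposition~\ref{main} does \emph{not} show that $s'$ is a K3 point; it takes as input two points $t,t'$ on the K3 moduli space whose images lift to CSpin-isogenous points $s,s'$, and outputs an isogeny $\sX_t \isog \sX_{t'}$. So the order is: (1) build $s' := \iota^{\ad}_s(g^{\ad}_p, g^{\ad,p})$ in the isogeny class of $s$; (2) use Matsumoto's surjectivity to choose a preimage $t'$ of $s'$ on the K3 moduli space; (3) apply Proposition~\ref{main} to $(t,t')$ to obtain $f : \sX_t \isog \sX_{t'}$; (4) read off the lattice identifications from Proposition~\ref{1.4.15}. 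If you reorganize your argument along these lines and relocate the use of $p>18d+4$, the proof goes through.
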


In section 6, we will propose a conjecture (cf. Conjecture~\ref{conj1}) which is an exact analogue of Thm~\ref{teaser} and explain how Thm~\ref{mainCor} is a partial result towards this conjecture (cf. Rmk~\ref{mainRemark}). Moreover, we show that this conjecture is completely known for supersingular K3 surfaces in odd characteristic (cf. Prop.~\ref{ss+}). Therefore, the paper is mainly concerned with the case when $X$ is not supersingular.

Our main idea is to apply Kisin's results on the Langlands-Rapoport conjecture \cite{Modp} to Shimura varieties which parametrize K3 surfaces, via the Kuga-Satake construction. We summarize the construction in the following diagram: 
\begin{center}
\begin{tikzcd}
\, & \shS(\CSpin(L_d), \Ohm) \arrow{d}{} \arrow{r}{} & \textit{A Siegel modular variety}\\\
\wt{\Mod}_{2d, \IZ_{(p)}}  \arrow{r}{} & \shS(\SO(L_d), \Ohm) & 
\end{tikzcd}
\end{center}
In this diagram, $L_d$ is a certain quadratic lattice; $\shS(\CSpin(L_d), \Ohm)$ and $\shS(\SO(L_d), \Ohm) $ are the canonical integral models of the Shimura varieties associated to algebraic groups $\CSpin(L_d)$, $\SO(L_d)$ and a certain period domain $\Ohm$; $\wt{\Mod}_{2d, \IZ_{(p)}}$ is a moduli stack of primitively quasi-polarized K3 surfaces of degree $2d$ with some additional structures. We have temporarily suppressed level structures, but we will introduce these objects in detail in Section 3. The work of Kisin clarifies the notion of isogeny classes on $\shS(\CSpin(L_d), \Ohm)(\bar{\IF}_p)$. We can prove that, as one would expect, K3 surfaces with isogenous Kuga-Satake abelian varieties are themselves isogenous. 
\begin{proposition} 
\label{mainTh}
Assume $p \nmid d$ and $p \ge 5$. Let $t, t' \in \wt{\Mod}_{2d, \IZ_{(p)}}(\bar{\IF}_p)$ correspond to quasi-polarized K3 surfaces $(X_t, \bxi_t)$ and $(X_{t'}, \bxi_{t'})$ of degree $2d$. If the images of the points $t, t'$ in $\shS(\SO(L_d), \Ohm)(\bar{\IF}_p)$ lift to points $s, s' \in \shS(\CSpin(L_d), \Ohm)(\bar{\IF}_p)$ in the same isogeny class, then there exists an isogeny $X_t \isog X_{t'}$ such that $f^*\ch_*(\bxi_{t'}) = \ch_*(\bxi_t)$ for $* = \cris, \et$.
\end{proposition}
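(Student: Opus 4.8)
The plan has three stages. First, I unwind Kisin's description of isogeny classes on $\shS(\CSpin(L_d),\Ohm)(\bar{\IF}_p)$: the hypothesis that $s$ and $s'$ lie in the same isogeny class yields a quasi-isogeny $\psi\colon \sA_s\isog\sA_{s'}$ between the corresponding Kuga--Satake abelian varieties that respects the crystalline Frobenius-invariant tensors and the prime-to-$p$ \'etale tensors defining the $\CSpin(L_d)$-level structure. After clearing denominators $\psi$ becomes an honest isogeny, so its graph is an algebraic correspondence inducing $\psi$ on $H^1_\et(-,\IA_f^p)$ and on $H^1_\cris(-/W)[1/p]$ compatibly with Frobenius; restricting to the standard $\SO(L_d)$-summand inside $\sEnd(H^1)$ provided by the Clifford action, and invoking the compatibility of the K3 period morphism $\wt{\Mod}_{2d,\IZ_{(p)}}\to\shS(\SO(L_d),\Ohm)$ with the Kuga--Satake construction recalled in Section~3, I obtain Frobenius-equivariant isometries $P^2_\cris(X_{t'}/W)[1/p]\xrightarrow{\sim} P^2_\cris(X_t/W)[1/p]$ and $P^2_\et(X_{t'},\IA_f^p)\xrightarrow{\sim} P^2_\et(X_t,\IA_f^p)$ taking $\ch_*(\bxi_{t'})$ to $\ch_*(\bxi_t)$ for $*=\cris,\et$. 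So the ``cohomological'' isogeny is in hand, and the remaining point is to upgrade it to an algebraic cycle on $X_t\times X_{t'}$.

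Second, I lift the situation to characteristic $0$ using the integral models. By smoothness of $\shS(\CSpin(L_d),\Ohm)$ I lift $s$ to a point in characteristic $0$, and then lift $\psi$ as well --- deforming the pair $(\sA_s,\sA_{s'})$ together with the isogeny, which is where the local structure of the isogeny class at $p$ (and, if needed, replacing $s$ within its isogeny class by a point reducing from a CM point, using the CM-lifting results of the paper) is used --- so as to produce a compatible characteristic-$0$ point lifting $s'$. One must arrange that both lifts fall in the open locus of $\shS(\SO(L_d),\Ohm)$ swept out by the K3 period morphism and lie over lifts of the quasi-polarized surfaces $(X_t,\bxi_t)$ and $(X_{t'},\bxi_{t'})$; this follows by taking the deformations inside the quasi-polarized K3 moduli stack $\wt{\Mod}_{2d,\IZ_{(p)}}$, along which quasi-polarizations lift. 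The output is a pair of quasi-polarized K3 surfaces $(\wt X_t,\wt{\bxi}_t)$, $(\wt X_{t'},\wt{\bxi}_{t'})$ over a $p$-adic ring, lifting $t$ and $t'$, whose Kuga--Satake abelian varieties are isogenous compatibly with the $\CSpin(L_d)$-structure.

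Third, in characteristic $0$ a $\CSpin$-compatible isogeny of Kuga--Satake abelian varieties induces a Hodge isometry $P^2(\wt X_{t'},\IQ)\xrightarrow{\sim} P^2(\wt X_t,\IQ)$; extending it by $\ch(\wt{\bxi}_{t'})\mapsto\ch(\wt{\bxi}_t)$ (both classes have square $2d$) gives a Hodge isometry $H^2(\wt X_{t'},\IQ)\xrightarrow{\sim} H^2(\wt X_t,\IQ)$, which by Buskin's theorem (\cite{Buskin}) --- equivalently, by Theorem~\ref{teaser} and its proof --- is realized by an algebraic correspondence $\wt Z\in\CH^2(\wt X_t\times\wt X_{t'})_\IQ$. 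Spreading $\wt Z$ out over a suitable base and specializing produces $Z\in\CH^2(X_t\times X_{t'})_\IQ$; compatibility of the specialization map with $\ell$-adic ($\ell\neq p$) and crystalline realizations of cycles and of cohomology --- smooth and proper base change, the $p$-adic de Rham comparison, and compatibility of cycle class maps with specialization --- shows $Z^p_*$ and $Z_{\cris,*}$ are isometries carrying $\ch_*(\bxi_{t'})$ to $\ch_*(\bxi_t)$. Hence $Z$ is an isogeny $X_t\isog X_{t'}$ of the asserted type.

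I expect the second stage to be the main obstacle: producing, from Kisin's abstract isogeny class, a genuine characteristic-$0$ lift of the \emph{isogeny} (not merely of the two points) that still lies within the image of the K3 period morphism is exactly the ``in the context of integral models of Shimura varieties'' content, and it is where one must combine the deformation theory of the Kuga--Satake and K3 period morphisms with the fine structure of isogeny classes at $p$; the hypotheses $p\nmid d$ and $p\ge 5$ enter here, guaranteeing that the relevant integral models and period morphisms are available and well behaved. One could instead attempt to establish algebraicity of the Kuga--Satake correspondence directly over $\bar{\IF}_p$, but lifting to characteristic $0$ and appealing to Buskin is the cleaner route.
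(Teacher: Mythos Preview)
Your overall architecture—extract a CSpin-isogeny $\psi$ from Kisin's isogeny class, lift to characteristic $0$, apply Buskin, specialize—matches the paper's approach in the finite height case. But two genuine gaps remain.

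First, the paper splits into supersingular and finite height cases, and treats them by entirely different arguments. In the supersingular case your lifting route is not used and indeed would not go through: the formal Brauer group has infinite height, so the Nygaard--Ogus deformation theory that underpins the lifting step is unavailable, and there is no evident way to choose lifts $t_V,t'_V$ so that $\psi$ respects the induced Hodge filtrations. Instead the paper observes that a CSpin-isogeny carries special endomorphisms to special endomorphisms, hence induces an isometry $\langle\bxi_{t'}\rangle^\perp_\IQ\stackrel{\sim}{\to}\langle\bxi_t\rangle^\perp_\IQ$ inside the N\'eron--Severi groups (via Prop.~\ref{submotive}); extending by $\bxi_{t'}\mapsto\bxi_t$ and invoking the elementary Lemma~\ref{corr} already realizes this as a correspondence, and since in the supersingular case $\NS\otimes\IQ$ spans all of $H^2$, this correspondence is the desired isogeny. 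You do not make this case split, and your Stage~2 as written does not cover the supersingular locus.

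Second, in the finite height case you correctly flag Stage~2 as the main obstacle but do not supply the mechanism that resolves it. The paper's key input is Lemma~\ref{workhorce}: one uses the canonical decomposition $H^2_\cris(X/W)\cong \ID(\BR_X^*)\oplus \ID(\sD^*)\oplus \ID(\BR_X)(-1)$ from Nygaard--Ogus theory, lifts the quasi-isogeny of formal Brauer groups induced by $\psi$ (a one-dimensional formal group problem, Lemma~\ref{LiftFG}), and then Prop.~\ref{NyOMain} produces lifts $X_V,X'_V$ whose Hodge filtrations are, by construction, intertwined by $\psi^*_\cris\otimes K$. Only then does $\psi$ lift by Berthelot--Ogus. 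Your proposal to ``deform the pair together with the isogeny'' or to pass to a CM point does not substitute for this: the CM-lifting theorem (Thm~\ref{CML}) is a \emph{consequence} of the present proposition, so invoking it here would be circular, and generic smoothness of the integral model gives you a lift of $s$ but says nothing about whether $\psi$ extends over it.
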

The precise form of this assertation will be stated in Prop.~\ref{main}. Once we have Prop.~\ref{main}, we can prove Theorem~\ref{mainCor} by a group-theoretic computation. The condition on $p$ arises when we apply the surjectivity of the period map, due to Matsumoto (cf. \cite[Thm~4.1]{Matsumoto}). 

As a key intermediate step in \cite{Modp}, Kisin proved that in each isogeny class of mod $p$ points of a Shimura variety of Hodge type, there exists a point which lifts to a special point. Prop.~\ref{mainTh} implies that, under mild assumptions, K3 surfaces over $\bar{\IF}_p$ also admit CM liftings up to isogeny. 
It turns out that for K3 surfaces of finite height, one can propagate the property of having CM lifting within an isogeny class, so there is no need to pass to isogeny. 
\begin{theorem}
\label{CML}
Let $X/\bar{\IF}_p$ be a K3 surface of finite height. Suppose $X$ admits a quasi-polarization of degree $p \nmid d$ and $p \ge 5$. Then there exists a finite extension $V/W(\bar{\IF}_p)$ and a lift $X_V$ of $X$ to $V$ such that for any $V \into \IC$, the complex K3 surface $X_V \times \IC$ has commutative Mumford-Tate group. Moreover, the natural map $\Pic(X_V) \to \Pic(X)$ is an isomorphism. 
\end{theorem}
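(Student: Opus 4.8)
The plan is to combine Kisin's theorem on the existence of a special point in each isogeny class of $\bar{\IF}_p$-points of a Hodge-type Shimura variety with Proposition~\ref{mainTh}, so as to place a CM-liftable K3 surface inside the isogeny class of $X$, and then to \emph{propagate} the CM lift back along the isogeny to $X$ itself; finiteness of the height of $X$ is precisely what makes this last step go through. Concretely, fix a quasi-polarization $\bxi$ of $X$ of degree $2d$ with $p\nmid d$, let $t\in\wt{\Mod}_{2d,\IZ_{(p)}}(\bar{\IF}_p)$ be the corresponding point, with image $\bar s\in\shS(\SO(L_d),\Ohm)(\bar{\IF}_p)$ and a lift $s\in\shS(\CSpin(L_d),\Ohm)(\bar{\IF}_p)$. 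By \cite{Modp} the isogeny class of $s$ contains the reduction $s_0$ of a special point $\tilde s_0$; projecting $\tilde s_0$ to the orthogonal Shimura variety, realizing its image by a complex projective K3 surface (surjectivity of the period map over $\IC$), and spreading out and reducing at the place over $p$ compatible with the reduction of $\tilde s_0$, one gets a quasi-polarized K3 surface $(X',\bxi')$ over $\bar{\IF}_p$ of degree $2d$ whose $\CSpin$-point lies in the isogeny class of $s$ and which admits a lift $X'_V$ over a finite extension $V/W(\bar{\IF}_p)$ with commutative Mumford--Tate group. Proposition~\ref{mainTh} then produces an isogeny $f:X\isog X'$ with $f^*\ch_*(\bxi')=\ch_*(\bxi)$ for $*=\cris,\et$; as $f$ induces a Frobenius-equivariant isometry $P^2_\cris(X'/W(\bar{\IF}_p))[1/p]\iso P^2_\cris(X/W(\bar{\IF}_p))[1/p]$, the Newton polygon is preserved and $X'$ again has finite height.

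Next I would propagate the lift from $X'$ to $X$. Through Kuga--Satake, $f$ induces a $\bar{\IF}_p$-isogeny $A\to A'$ of the associated abelian varieties, and $A'=\KS(X')$ lifts to the CM abelian variety $A'_V:=\KS(X'_V)$. The aim is to lift $A\to A'$ to characteristic $0$: the prime-to-$p$ part lifts uniquely by \'etale descent, while for the $p$-part one must lift a finite flat subgroup scheme of $A'[p^\infty]$ to one of $A'_V[p^\infty]$. Here finiteness of the height is indispensable: since $X$ has finite height $h$, the $p$-divisible group $A[p^\infty]\sim A'[p^\infty]$ has slopes $\tfrac12\pm\tfrac1{2h}$ and in particular no supersingular (slope $\tfrac12$) part, so the slope filtration of the CM $p$-divisible group $A'_V[p^\infty]$ reduces the lifting problem, up to isogeny, to isoclinic pieces arising from a CM splitting, where it can be handled using the CM structure, at the cost of a further finite extension. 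One thereby obtains a CM abelian variety $A_V$ lifting $A$ with an isogeny $A_V\to A'_V$ lifting $A\to A'$; transporting Hodge cycles and level structure along this isogeny presents $A_V$ as a $V$-point $\tilde s$ of $\shS(\CSpin(L_d),\Ohm)$ lifting $s$, whose Mumford--Tate group is a torus, being a subquotient of that of the CM abelian variety $A_V$. Pushing $\tilde s$ to $\shS(\SO(L_d),\Ohm)$, using that the Kuga--Satake period morphism $\wt{\Mod}_{2d,\IZ_{(p)}}\to\shS(\SO(L_d),\Ohm)$ is \'etale at $t$ (so induces an isomorphism on formal completions at $t$ and $\bar s$, valid for $p\ge5$), and algebraizing the resulting formal quasi-polarized lift of $(X,\bxi)$ by Grothendieck's existence theorem, one obtains a projective lift $X_V$ of $X$ over a finite extension of $W(\bar{\IF}_p)$ such that $X_V\times\IC$ has commutative Mumford--Tate group for any embedding $V\into\IC$.

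It remains to see that $\Pic(X_V)\to\Pic(X)$ is an isomorphism. Injectivity is the standard specialization statement for line bundles on a smooth proper family of K3 surfaces, so it suffices to show $\rho(X_V\times\IC)\ge\rho(X)$. Here finiteness of the height is used once more: combined with the Tate conjecture for finite-height K3 surfaces it forces $\rho(X)=\rho(X')$ (the isomorphism $f^*$ matching $\NS(X')_{\IQ_p}$ with $\NS(X)_{\IQ_p}$ inside $P^2_\cris(X/W(\bar{\IF}_p))[1/p]$ via the eigenspace where Frobenius acts by $p$), and via the Nygaard--Ogus theory of Picard-preserving lifts of finite-height K3 surfaces it ensures that the CM lift produced above can be arranged so that no Picard class is lost, i.e.\ it realizes all of $\NS(X)$. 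When $X$ is ordinary ($h=1$) this part is already classical, the Nygaard canonical lift being a CM lift preserving the Picard lattice.

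The principal obstacle is the propagation step in the second paragraph: lifting the isogeny of Kuga--Satake abelian varieties to characteristic $0$ while retaining all the Hodge cycles and level structure needed to re-enter the $\CSpin$-Shimura variety, and in particular lifting its $p$-part. This is exactly where finiteness of the height cannot be dispensed with---in the supersingular case the $p$-divisible group $A[p^\infty]$ is isoclinic of slope $\tfrac12$, the finite subgroup scheme to be lifted need not lift, and the argument genuinely breaks, in agreement with the restriction to finite height in the statement (and with the fact that the supersingular case is treated by separate means in the paper). The non-dropping of the Picard rank in the third paragraph is a secondary point that likewise rests on the finite-height hypothesis.
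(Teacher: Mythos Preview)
Your overall architecture---Kisin's special-point theorem inside the isogeny class, then propagation back to $X$---matches the paper's. The divergence is entirely in how the propagation is carried out, and this is where your sketch has a genuine gap.

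You try to lift the CSpin-isogeny $\sA_s\to\sA_{s'}$ itself along the given CM lift $\sA_{s'_{V'}}$ of $\sA_{s'}$. This is the hard direction: you must produce a lift of $\sA_s$ whose Hodge filtration is the pullback of the one coming from $\sA_{s'_{V'}}$, and then argue that this lift lands back in the CSpin Shimura variety and descends, via \'etaleness of the period map, to a lift of $X$. Your handling of the $p$-part is not correct as written: the slopes of $\sA_s[p^\infty]$ are \emph{not} simply $\tfrac12\pm\tfrac{1}{2h}$ (the Clifford construction mixes the slopes of $\bL_{\cris}$ in a more complicated way), and ``reducing to isoclinic pieces via the CM splitting'' does not by itself show that the relevant finite flat subgroup lifts compatibly with all the CSpin tensors. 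Your Picard argument is also circular: the lift $X_V$ you produce comes from the abelian-variety side, so you cannot after the fact ``arrange'' it to be a Nygaard--Ogus lift; those are two different constructions of a lift, and it is not clear they coincide.

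The paper sidesteps all of this by never lifting the isogeny $\sA_s\to\sA_{s'}$. Instead it transports a single \emph{endomorphism}: take a generator $\tau'$ of the CM field $\End_{\Hdg}T(\sX_{t'_\IC})_\IQ$ (Theorem~\ref{Zarhin}(c)), lift it to a CSpin-self-isogeny $\psi'_\IC$ of $\sA_{s'_\IC}$ via Lemma~\ref{surjection}, specialize to $\psi_{\tau'}\in\Aut^0(\sA_{s'})$, and conjugate by $\psi$ to get $\psi_\tau\in\Aut^0(\sA_s)$. Now one only needs a lift of $X$ whose Hodge filtration is preserved by the induced crystalline isometry $\tau_\cris$ of $P^2_\cris(\sX_t/W)[1/p]$. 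This is exactly what Lemma~\ref{workhorce} (applied with $X=X'$) provides, via the formal Brauer group and Lemma~\ref{LiftFG}; and because that lift is a Nygaard--Ogus lift (Proposition~\ref{NyOMain}), the isomorphism $\Pic(X_V)\to\Pic(X)$ is built in. One then lifts $\psi_\tau$ to $\sA_{s_V}$ by Berthelot--Ogus, obtains a Hodge endomorphism $\tau$ of $T(\sX_{t_V\otimes\IC})_\IQ$ with $\IQ(\tau)\cong\IQ(\tau')$, and concludes CM from $\dim_{\IQ(\tau)}T(\sX_{t_V\otimes\IC})_\IQ=1$ using Proposition~\ref{CMsp} to match the transcendental ranks. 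The point is that lifting an \emph{auto}-isogeny against a Nygaard--Ogus lift of a single surface is exactly what the paper's Section~4 machinery is designed for; lifting a heterogeneous isogeny against a prescribed lift on one side, as you attempt, is not.
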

While preparing this paper, the author noticed that K. Ito, T. Ito and T. Koshikawa released a new preprint \cite{IIK} which contains the above result with no assumption on $p$.

Finally, we remark that isogenies between K3 surfaces (in our sense) can also be given by constructing moduli of twisted sheaves. The moduli theory of twisted sheaves for complex K3's was initiated by S. Mukai and its generalization to positive characteristic has been studied by Lieblich, Maulik, Olsson and Snowden (cf. \cite{LO1}, \cite{LO2}, \cite{LMS}). Huybrechts has shown that in fact, over $\IC$, \textit{every isogeny} can be realized by a sequence of such operations \cite[Thm~0.1]{Huy}. It would be interesting to figure out whether this is true over $\bar{\IF}_p$. 

\paragraph{Outline/Strategy of the Paper} In section 2 and 3, we review the Kuga-Satake construction and the relevant Shimura varieties, mostly following \cite{Keerthi} and \cite{CSpin}. In addition, we emphasize that an isogeny between Kuga-Satake abelian varieties should not be a bare isogeny between abelian varieties, but one which repects certain tensors. We call such isogenies ``CSpin-isogenies". This is an analogue of Andr\'e's notion of an isomorphism between Kuga-Satake packages (cf. \cite[Def.~4.5.1, 4.7.1]{Andre}). We also discuss \textit{special endomorphisms} of Kuga-Satake abelian varieties, which correspond to line bundles on K3 surfaces. Special endomorphisms are preserved by CSpin-isogenies. 

In Section 4, we prove a lifting lemma, which we use in Section 5 to prove the theorems of the paper. The main idea is that a K3 surface of finite height deforms like an elliptic curve: There is a natural one-dimensional formal group of finite height attached to each such surface $X$, namely the formal Brauer group $\BR_X$ of $X$. Moreover, there is a natural map of deformation functors 
$ \Def_X \to \Def_{\BR_X} $.
Although this map is not an isomorphism, Nygaard and Ogus constructed a natural section to this map. We make use of this section to show that away from the supersingular locus, CSpin-isogenies always lift to characteristic zero. This treats the finite height case for Prop.~\ref{mainTh}. We treat the supersingular case separately.

In Section 6, we discuss how isogenies interact with ample cones, polarizations, and quasi-polarizations, and how Torelli theorems can be understood from the perspective of isogenies. Then we will discuss how Thm~\ref{mainCor} is related to an exact analogue of Thm~\ref{teaser}. 
\paragraph{Notations and Conventions}
\begin{itemize}
    \item $\what{\IZ}$ denotes the profinite completion of $\what{\IZ}$, and $\what{\IZ}^p$ denotes the prime-to-$p$ part of $\what{\IZ}$. We write $\IA_f$ for the finite adeles and $\IA^p_f$ for the prime-to-$p$ part. When $k$ is a perfect field of characteristic $p > 0$, we denote by $W(k)$ the Witt ring of vectors. The lift of Frobenius on $W(k)$ is denoted by $\sigma$. If $X$ is a smooth projective variety over $k$, the $\sigma$-linear Frobenius action on $H^2_\cris(X/W(k))$ is denoted by $F$. 
    \item Suppose $k$ is a perfect field of characteristic $p > 0$ and $S$ is a scheme on which $p$ is locally nilpotent over $W(k)$. 
    For any $p$-divisible group $\shG$ over $S$, we let $\ID(\shG)$ denote its contravariant Dieudonn\'e crystal on $\Cris(S/W(k))$ and $\shG^*$ denote its Cartier dual. When $S = \mathrm{Spec\,} k$, we also write the $W(k)$-module $\ID(\shG)_{W(k)}$ simply as $\ID(\shG)$.
    \item For a ring $R$ and a finite free $R$-module $M$, we denote by $M^\tensor$ the direct sum of the $R$-modules obtained from $M$ by taking duals, tensor products, symmetric and exterior powers.
    \item For any field extension $E/F$ and algebraic group $G$ over $E$, we write $\Res_{E/F} G$ for the Weil restriction of $G$ to $F$. $\IS := \Res_{\IC/\IR} \IG_m$ denotes the Deligne torus.
    \item By a quadratic lattice over a ring $R$ we always mean a finitely generated free module over $R$ equipped with a symmetric bilinear pairing. 
\end{itemize}

\section{Hodge Theoretic Preparations}
\paragraph{Hodge Structures of K3 Surfaces} 
We collect some definitions and facts about Hodge structures of K3 type. The definition below differs from the usual definition (cf. \cite[Ch.~3 Def.~2.3]{Huy}) by a Tate twist.

\begin{definition}
A rational or integral Hodge structure $V$ of weight $0$ is said to be of \textit{K3 type} if $\dim V^{-1,1} = 1$ and $V^{a, b} = 0$ if $|a - b| > 2$.
\end{definition}

Let $X$ be a complex K3 surface. The \textit{transcendental lattice} of $X$, denoted by $T(X)$, is $\Pic(X)^\perp \subset H^2(X, \IZ(1))$. $H^2(X, \IZ(1))$ and $T(X)$ are both Hodge structures of K3 type. Moreover, if $X$ is algebraic, then $T(X)_\IQ := T(X) \tensor \IQ$ is polarizable and irreducible as a Hodge structure (cf. \cite[Lemma 3.1]{K3book}). We denote by $\MT(-)$ the Mumford-Tate group associated to a Hodge structure.

\begin{theorem} Let $V$ be a rational polarized irreducible Hodge structure which is of K3 type. 
\label{Zarhin}
\begin{enumerate}[label = (\alph*)]
\item $E:= \End_{\Hdg}(V)$ is either a totally real field or a CM field. 
\item $\MT(V)$ is commutative if and only if $\dim_E V = 1$. In this case, $E$ is a CM field and $\MT(V) \subset \GL(V)$ is equal to 
\begin{align}
\label{maximalTorus}
    \ker (\mathrm{Nm}: \Res_{E/\IQ} \IG_m \to \Res_{F/\IQ} \IG_m)
\end{align}
where $F$ is the maximal totally real subfield of $E$ and $\mathrm{Nm}$ is the norm map. Note that by definition $E$ is a subalgebra of $\End(V)$, so there is an embedding $\Res_{E/\IQ} \IG_m \subset \GL(V)$. 
\item When $E$ is a CM field, there exists a Hodge isometry $\tau$ such that $E = \IQ(\tau)$.
\end{enumerate}
 \end{theorem}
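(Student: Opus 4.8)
The plan is to study $V$ through the action of $E=\End_\Hdg(V)$, the key input being that $V^{-1,1}$ is a line.

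\emph{Part (a).} Since $V$ is irreducible, Schur's lemma makes $E$ a division algebra over $\IQ$. The polarization $\psi$ gives the adjoint (Rosati) involution $e\mapsto e'$ on $E$, determined by $\psi(ex,y)=\psi(x,e'y)$; it is \emph{positive}, i.e. $\mathrm{Tr}_{E/\IQ}(ee')>0$ for $e\neq0$, because $e'$ is exactly the adjoint of $e$ for the positive-definite symmetric form $(x,y)\mapsto\psi(x,Cy)$ on $V_\IR$, where $C$ is the Weil operator (which commutes with $E$ as $E$ preserves the Hodge decomposition). As $E$ acts $\IC$-linearly on the one-dimensional space $V^{-1,1}$, it admits a nonzero $\IQ$-algebra homomorphism $E\to\IC$; since $E$ is a division algebra this is injective, so $E$ is a number field. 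A number field with a positive involution is either totally real (if the involution is the identity) or a CM field with complex conjugation as its involution: positivity forces the fixed field $F$ to be totally real and $E$ to be totally imaginary over $F$ (Albert's classification).

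\emph{Part (b).} Suppose first $\MT(V)$ is commutative. It is connected (its base change to $\IR$ contains the image of the connected group $\IS$, so it equals its identity component), hence a torus. Since $V$ is irreducible over $\MT(V)$, the image $A$ of $\IQ[\MT(V)]$ in $\End(V)$ is a commutative algebra acting faithfully and simply on $V$, hence a field with $\dim_A V=1$; and $\End_A(V)=A$ coincides with $\End_{\MT(V)}(V)=\End_\Hdg(V)=E$, so $\dim_E V=1$. Conversely, if $\dim_E V=1$ then, since $E$ is precisely the centralizer of $\MT(V)$ in $\End(V)$, the group $\MT(V)$ lies in the centralizer of $E$ in $\GL(V)$, which is $\Aut_E(V)=E^\times=\Res_{E/\IQ}\IG_m$, a torus. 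Here $E$ is not totally real: $V^{-1,1}$ is an $E\otimes\IC$-stable line, so equals the $\sigma_0$-eigenspace $V_{\sigma_0}$ for a unique embedding $\sigma_0\colon E\to\IC$, and then $V^{1,-1}=\overline{V^{-1,1}}=V_{\bar\sigma_0}$; were $\sigma_0$ real this would give $V^{1,-1}=V^{-1,1}$, absurd. So $E$ is CM by (a). Moreover $\MT(V)$ preserves $\psi$, and for $e\in E^\times$ one has $\psi(ex,ey)=\psi(x,e'ey)=\psi(x,\Nm_{E/F}(e)\,y)$, so $e$ preserves $\psi$ iff $\Nm_{E/F}(e)=1$; hence $\MT(V)\subseteq T:=\ker(\Nm\colon\Res_{E/\IQ}\IG_m\to\Res_{F/\IQ}\IG_m)$. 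Finally $\MT(V)=T$: the cocharacter lattice of $\MT(V)$ is the $\IZ$-span of the $\Gal(\bar\IQ/\IQ)$-orbit of the Hodge cocharacter $\mu_h$. Writing $X_*(\Res_{E/\IQ}\IG_m)=\IZ^{\Hom(E,\bar\IQ)}$ with basis $\{e_\sigma\}$ and $X_*(T)=\{(a_\sigma):a_\sigma+a_{\bar\sigma}=0\}$, the identifications $V^{-1,1}=V_{\sigma_0}$, $V^{1,-1}=V_{\bar\sigma_0}$ give $\mu_h=e_{\sigma_0}-e_{\bar\sigma_0}$, and its Galois orbit $\{e_\tau-e_{\bar\tau}\}_\tau$ spans $X_*(T)$ over $\IZ$ (any $(a_\sigma)\in X_*(T)$ equals $\sum_{\tau\in\Phi}a_\tau(e_\tau-e_{\bar\tau})$ for a CM type $\Phi$); hence the minimal such subtorus is $T$.

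\emph{Part (c) and the main difficulty.} A Hodge isometry of $V$ is an element of $\Aut_\Hdg(V)\subseteq E^\times$ preserving $\psi$, so by the computation in (b) it is exactly an element of $T(\IQ)$, where $T=\ker(\Nm\colon\Res_{E/\IQ}\IG_m\to\Res_{F/\IQ}\IG_m)$; we must produce one with $\IQ(\tau)=E$. There are finitely many proper subfields $K\subsetneq E$, and for each one $T\cap\Res_{K/\IQ}\IG_m$ is a proper closed subgroup of the irreducible torus $T$: one checks $T\not\subseteq\Res_{K/\IQ}\IG_m$ because the restriction map $\Hom(E,\bar\IQ)\to\Hom(K,\bar\IQ)$ is non-injective for $K\subsetneq E$ (the sole delicate case $K=F$ gives $T\cap\Res_{F/\IQ}\IG_m=\{\pm1\}\neq T$). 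Since rational points on a torus over $\IQ$ are Zariski-dense, $T(\IQ)$ is not contained in the proper closed subset $\bigcup_K(T\cap\Res_{K/\IQ}\IG_m)$, and any $\tau$ outside it is a Hodge isometry generating $E$ over $\IQ$. The formal ingredients (Schur's lemma, positivity of the Rosati involution, density of torus rational points) are routine; the one point demanding care is the equality $\MT(V)=T$ in (b), where one must match $\mu_h$ with an $E$-eigenline and verify the resulting CM type $\{\sigma_0\}$ is nondegenerate — which is automatic precisely because $\dim V^{-1,1}=1$, i.e. exactly where the K3-type hypothesis (rather than mere irreducibility) is used.
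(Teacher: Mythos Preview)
Your proof is correct. The paper does not actually give a proof of this theorem: it simply cites Zarhin for (a) and (b) and Huybrechts' book (crediting Borcea) for (c). What you have written is a complete, self-contained argument, so there is no ``paper's approach'' to compare against beyond the content of those references.

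That said, a few comments. Your arguments for (a) and (b) are essentially Zarhin's: the positivity of the Rosati-type involution coming from the polarization, the embedding $E\hookrightarrow\IC$ via the line $V^{-1,1}$, and the identification of $\MT(V)$ with the norm-one torus via the Galois orbit of the Hodge cocharacter are the standard steps. Your phrasing ``it is connected (its base change to $\IR$ contains the image of the connected group $\IS$, so it equals its identity component)'' is slightly telegraphic---the point is that the identity component $\MT(V)^\circ$ is already a $\IQ$-subgroup whose real points contain $h(\IS)$, so minimality forces $\MT(V)=\MT(V)^\circ$---but the content is right.

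For (c), your argument via Zariski density of rational points on the norm-one torus $T$ is clean and works uniformly; note that the ``delicate case'' is really any $K\subseteq F$ (not only $K=F$), but the same computation $T\cap\Res_{K/\IQ}\IG_m\subseteq\mu_2$ handles all of these at once. Borcea's original argument is somewhat different in flavor (it works more directly with the eigenspace decomposition and weak approximation), but yours is arguably more transparent.
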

\begin{proof}
(a) and (b) are due to Zarhin (cf. \cite[Thm~1.5.1, Rmk~1.5.3, Thm~2.3.1]{Zarhin}). (c) can be found in \cite[Thm~3.7]{K3book}, where the result is credited to Borcea \cite{Borcea}.
\end{proof}

 \begin{remark}
$X$ is said to have CM if $\MT(T(X)_\IQ)$ is commutative. Some authors say that $X$ has CM if $E$ is a CM field. We emphasize that $E$ being a CM field is not enough to ensure that $\MT(T(X)_\IQ)$ is commutative. Here is an example: Take two elliptic curves $E_1$ and $E_2$. Suppose $E_1$ has CM but $E_2$ does not. Apply the Kummer construction to the abelian surface $E_1 \times E_2$. We obtain a K3 surface $X$ with $\dim T(X)_\IQ = 4$ but $E := \End_{\Hdg}(T(X)_\IQ)$ is a CM field of dimension $2$.  The real points of the Hodge group $\Hdg(T(X)_\IQ)$ form a unitary group of signature $(1+, 1-)$ which is not commutative.  
\end{remark}

\label{KS}
\paragraph{A Review of the Clifford Algebra}
Let $L$ be a self-dual quadratic lattice over a commutative ring $R$. Let $q$ denote the quadratic form attached to $L$. Assume that $2$ is invertible in $R$. We can form the Clifford algebra $$\Cl(L) := (\oplus_{n \ge 0} L^{\tensor n})/(v \tensor v - q(v)). $$
$\Cl(L)$ has a natural $\IZ/2\IZ$-grading and we denote by $\Cl^+(L)$ (resp. $\Cl^-(L)$) the even (resp. odd) part. We define the group $\CSpin(L)$ by 
$$ \CSpin(L) = \{ v \in \Cl^+(L)^\times : v L v^{-1} = L \}. $$

We give $\CSpin(L)$ the structure of an algebraic group over $R$ by defining $\CSpin(L)(R') = \CSpin(L_{R'})$ for any $R$-algebra $R'$. There is a norm map $\mathrm{Nm} : \CSpin(L) \to \IG_m$ given by $v \mapsto v^* \cdot v$, where $v^*$ is the natural anti-involution of $v$. By letting $\CSpin(L)$ act on $L$ by conjugation, we get the adjoint representation of $\CSpin(L)$, which fits into an exact sequence of algebraic groups
\begin{equation}
\label{coreExt}
1 \to \IG_m \to \CSpin(L) \stackrel{\ad}{\to} \SO(L) \to 1.
\end{equation} 
Let $H$ be a free $\Cl(L)$ bi-module of rank $1$. Note that $H$ has a natural $\IZ/2 \IZ$-grading. Left multiplication of $\Cl(L)$ on $H$ gives us a spin representation $\sp : \CSpin(L) \to \GL(H)$ and an embedding 
\begin{equation}
\label{embeddingofL}
L \into \Cl(L) \into \End (H)
\end{equation} 
Equip $\End (H)$ with the metric $(\alpha, \beta) := 2^{-\mathrm{rank\,} L} \tr(\alpha \circ \beta)$. Then we have
\begin{lemma}
\label{stabilizer}
\begin{enumerate}[label=(\alph*)]
\item The embedding (\ref{embeddingofL}) is an isometry with respect to $q$ and $( -, - )$.
\item There exists a unique orthogonal projection $\pi : \End (H) \to L$.
\item $\CSpin(L)$ is the stabilizer of $\pi \in H^{(2, 2)}$ among all automorphisms of $H$ as a $\IZ/2\IZ$-graded right $\Cl(L)$-module. 
\end{enumerate}
\end{lemma}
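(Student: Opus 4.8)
The plan is to dispose of parts (a) and (b) quickly and then concentrate on (c), which carries the content. \emph{For (a):} in $\Cl(L)$ one has $v^2 = q(v)$ for $v \in L$, so under (\ref{embeddingofL}) the square of ``left multiplication by $v$'' on $H$ equals $q(v)\cdot \id_H$; since $H$ is free of rank one over $\Cl(L)$ and $\Cl(L)$ is free of rank $2^{\rank L}$ over $R$, taking traces gives $\tr(v\circ v) = 2^{\rank L} q(v)$, i.e. $(v,v)=q(v)$, and since $2 \in R^\times$ polarizing promotes this to an isometry of the associated bilinear forms. \emph{For (b):} by (a) and the self-duality of $(L,q)$ the form $(-,-)$ restricts to a unimodular pairing on $L \subset \End(H)$, so $\End(H) = L \oplus L^\perp$ and the projection $\pi$ onto $L$ with kernel $L^\perp$ is the unique $R$-linear retraction of (\ref{embeddingofL}) killing $L^\perp$ (two such maps agree on $L$ and on $L^\perp$, hence everywhere). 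We regard $\pi$, via the composite $\End(H) \to L \into \End(H)$, as an element of $H^{(2,2)}$.

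\emph{For (c):} it suffices to prove the equality as an identity of group schemes over $R$, and since all the data in sight commute with base change it is enough to compare $R'$-points for every $R$-algebra $R'$; I therefore suppress the base change. After choosing a homogeneous generator of $H$ as a right $\Cl(L)$-module, the group of $\IZ/2\IZ$-graded right $\Cl(L)$-module automorphisms of $H$ is identified with $\Cl^+(L)^\times$, acting by left multiplication (an odd unit would interchange $H^+$ and $H^-$, so only even units are admissible), and this is precisely the ambient group in the definition of $\CSpin(L)$. For $g \in \Cl^+(L)^\times$ the induced action on $\End(H)$ is conjugation $c_g\colon \alpha\mapsto g\alpha g^{-1}$, which restricts on $L$ to $v\mapsto gvg^{-1}$ by (\ref{embeddingofL}) and is an isometry of $(\End(H),(-,-))$ by cyclicity of the trace. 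If $g\in\CSpin(L)$ then $c_g$ preserves $L$, hence — being an isometry — also preserves $L^\perp$, hence commutes with $\pi$, so $g$ fixes $\pi$. Conversely, if $g$ fixes $\pi$ then $c_g$ commutes with $\pi$; since $\pi|_L=\id_L$ we get $c_g(\ell)=c_g(\pi\ell)=\pi(c_g\ell)\in L$ for $\ell\in L$, i.e. $gLg^{-1}\subseteq L$, and applying the same to $g^{-1}$ gives $g^{-1}Lg\subseteq L$, so $gLg^{-1}=L$ and $g\in\CSpin(L)$.

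The only step I expect to require genuine care is (c): identifying the group of $\IZ/2\IZ$-graded right $\Cl(L)$-module automorphisms of $H$, and arranging the argument functorially so that it yields an equality of group schemes rather than merely of points. The observation that conjugation is an isometry, and the resulting chain of equivalences, are then short, and parts (a)--(b) are essentially formal.
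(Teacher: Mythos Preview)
Your argument is correct. Parts (a) and (b) are handled cleanly, and your treatment of (c) is the standard one: identify the ambient group of $\IZ/2\IZ$-graded right $\Cl(L)$-automorphisms of $H$ with $\Cl^+(L)^\times$ via left multiplication, observe that the induced action on $\End(H)\cong H^{(1,1)}$ is conjugation (hence an isometry for the trace form), and then note that fixing $\pi\in H^{(2,2)}\cong\End(\End(H))$ amounts exactly to $c_g\circ\pi=\pi\circ c_g$, from which preservation of $L$ follows immediately. The two-sided inclusion at the end to upgrade $gLg^{-1}\subseteq L$ to equality is the right move over a general base ring.

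The paper does not actually give a proof: it simply cites \cite[1.3]{CSpin}. So there is nothing to compare against beyond saying that your direct argument supplies what the paper outsources. The content of the reference is essentially what you wrote.
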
  
\begin{proof}
See \cite[1.3]{CSpin}. 
\end{proof}

\paragraph{The Kuga-Satake Construction} Take $R = \IQ$ and assume that $(L, q)$ has signature $(n+, 2-)$. The Kuga-Satake construction associates a Hodge structure $\wt{h}$ on $H$ of type $\{(1,0), (0, 1)\}$ to each Hodge structure $h$ on $L$ of type $\{(-1, 1), (0, 0), (1, -1)\}$ which respects $q$. It can be packed in the following diagram 
\begin{figure}[H]
\centering
\begin{tikzcd}
\, & \CSpin(L_\IR) \arrow{d}{\ad} \arrow{r}{\sp} & \GL(H_\IR)\\
\IS \arrow{ur}{\wt{h}} \arrow{r}{h} & \SO(L_\IR ) & 
\end{tikzcd}
\caption{Kuga-Satake construction for a Hodge structure of K3 type}
\end{figure} 
For each $h$, there is a unique lift $\wt{h}$ such that $\mathrm{Nm} \circ \wt{h}$ gives the norm map $\IS \to \IG_{m, \IR}$. The composition $\sp \circ \wt{h}$ gives the desired Hodge structure of weight $1$ on $H$ (cf. \cite[4.2]{Deligne1}). In fact, the induced Hodge structure on $H$ has a simple description by that on $L$:
$$ \Fil^1 H_\IC = \ker (\Fil^1 L_\IC) $$
Here we are viewing $L$ as a subspace of $\End (H)$ via left multiplication. Since $\Fil^1 L_\IC$ is one-dimensional, it makes sense to talk about $\ker (\Fil^1 L_\IC)$.

The following simple observation makes the Kuga-Satake construction naturally suitable for studying complex multiplication: 
\begin{lemma}
\label{CMcors}
$\MT(\wt{h})$ is commutative if and only if $\MT(h)$ is commutative.  
\end{lemma}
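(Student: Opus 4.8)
The plan is to deduce the lemma from the central extension (\ref{coreExt}). First I would record that the spin representation $\sp$ is faithful: left multiplication of $\Cl(L)$ on the free rank-one module $H$ is injective, and $\CSpin(L) \subset \Cl^+(L)^\times$, so $\sp$ realizes $\CSpin(L)$ as a closed subgroup of $\GL(H)$. Since $\sp \circ \wt h$ factors through $\CSpin(L_\IR)$, we may regard $\MT(\wt h)$ as the smallest $\IQ$-subgroup of $\CSpin(L)$ through which $\wt h$ factors. Because $\ad \circ \wt h = h$ by the very definition of the lift $\wt h$, the image $\ad(\MT(\wt h)) \subseteq \SO(L)$ is a $\IQ$-algebraic subgroup through which $h$ factors, so $\MT(h) \subseteq \ad(\MT(\wt h))$; conversely $\wt h$ factors through $\ad^{-1}(\MT(h))$, whence $\MT(\wt h) \subseteq \ad^{-1}(\MT(h))$ and therefore $\ad(\MT(\wt h)) = \MT(h)$. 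Thus $\ad$ induces a surjection $\MT(\wt h) \twoheadrightarrow \MT(h)$ whose kernel $\MT(\wt h) \cap \IG_m$ sits inside the central $\IG_m$ of (\ref{coreExt}) and is therefore central in $\MT(\wt h)$.

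One direction is then immediate: if $\MT(\wt h)$ is commutative, so is its homomorphic image $\MT(h)$. For the converse, assume $\MT(h)$ is commutative. Here I would invoke that $\MT(\wt h)$ is connected (as a Mumford-Tate group) and reductive (the Hodge structure $\sp \circ \wt h$ on $H$ is polarizable, being essentially the $H^1$ of the Kuga-Satake abelian variety). Then the derived group $\MT(\wt h)^{\mathrm{der}}$ is connected semisimple, and $\ad$ maps it onto $\MT(h)^{\mathrm{der}} = 1$; hence $\MT(\wt h)^{\mathrm{der}} \subseteq \MT(\wt h) \cap \IG_m$, so it is a connected semisimple group which is central, in particular abelian, hence trivial. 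Therefore $\MT(\wt h) = \MT(\wt h)^{\mathrm{der}} \cdot Z(\MT(\wt h))^\circ$ is a torus, and in particular commutative.

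I expect the only subtle point to be this last step. It is tempting to argue directly that a central extension of an abelian group by a central subgroup must be abelian, but that is false in general (Heisenberg groups). What rescues the argument is the reductivity of Mumford-Tate groups of polarizable Hodge structures, which forces the potentially nonabelian part $\MT(\wt h)^{\mathrm{der}}$ to be simultaneously semisimple and central, hence trivial. Everything else is a formal consequence of the exact sequence (\ref{coreExt}) together with the faithfulness of the spin representation used in Lemma~\ref{stabilizer}.
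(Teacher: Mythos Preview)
Your proof is correct. Both you and the paper extract from (\ref{coreExt}) a central extension of $\MT(h)$ by (a subgroup of) $\IG_m$, and the direction ``$\MT(\wt h)$ commutative $\Rightarrow$ $\MT(h)$ commutative'' is handled identically. For the converse the arguments diverge: the paper invokes Theorem~\ref{Zarhin}(b) to upgrade ``$\MT(h)$ commutative'' to ``$\MT(h)$ is a torus'', and then cites the general fact that an extension of a torus by a torus is again a torus. You instead appeal to the reductivity of $\MT(\wt h)$ (coming from polarizability of the Kuga--Satake Hodge structure on $H$) and kill the derived subgroup directly by showing it lands in the central $\IG_m$. Your route avoids Zarhin's structural input on the orthogonal side; the paper's route avoids having to note that the weight-one Hodge structure on $H$ is polarizable. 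Substantively both arguments are doing the same job---ruling out Heisenberg-type central extensions---and your explicit remark about why the naive ``central extension of abelian by abelian'' reasoning fails is a nice touch.
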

\begin{proof}
We obtain the following exact sequence 
$$ 1 \to \IG_{m, \IQ} \to \MT(\wt{h}) \to \MT(h) \to 1 $$
by pulling back (\ref{coreExt}) along the inclusion $\MT(h) \into \SO(L)$. If $\MT(h)$ is commutative, then by Thm~\ref{Zarhin}(b) it is a torus. One easily deduces from \cite[IV Cor.~11.5]{LAG} that an extension of a torus by another torus is again a torus, so $\MT(\wt{h})$ is commutative. The converse implication is clear. 
\end{proof}

\begin{lemma}
\label{surjection}
Every isometry $g \in \SO(L)(\IQ)$ preserving the Hodge structure $h$ lifts to $\wt{g} \in \CSpin(L)(\IQ)$ preserving $\wt{h}$. In other words, the natural map of centralizers $$C_{\CSpin(L)}(\MT(\wt{h}))(\IQ) \to C_{\SO(L)}(\MT(h))(\IQ)$$ 
is surjective. 
\end{lemma}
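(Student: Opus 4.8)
The plan is to separate the two issues at stake — $\IQ$-rationality of the lift, and compatibility with the Hodge structure — and deal with them in that order. First, because $\ad\colon\CSpin(L)\to\SO(L)$ is surjective with kernel $\IG_m$, the long exact sequence in cohomology attached to (\ref{coreExt}) together with Hilbert's Theorem 90 ($H^1(\IQ,\IG_m)=0$) shows that $\CSpin(L)(\IQ)\to\SO(L)(\IQ)$ is surjective; choose any lift $\wt g\in\CSpin(L)(\IQ)$ of $g$. Since $g$ centralizes $\MT(h)$, in particular it centralizes $h(\IS)$. The statement then reduces to showing that $\wt g$ commutes with $\wt h(\IS)$: granting this, $C_{\CSpin(L)}(\wt g)$ is a $\IQ$-subgroup of $\CSpin(L)\subset\GL(H)$ whose real points contain $\wt h(\IS)$, so by the defining minimality of the Mumford--Tate group it contains $\MT(\wt h)$, and hence $\wt g\in C_{\CSpin(L)}(\MT(\wt h))(\IQ)$ maps to $g$.

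To show $\wt g$ commutes with $\wt h(\IS)$, set $c(z)=\wt g\,\wt h(z)\,\wt g^{-1}\,\wt h(z)^{-1}$, which is a morphism $\IS\to\CSpin(L)_\IR$. Applying $\ad$ gives $\ad(c(z))=g\,h(z)\,g^{-1}\,h(z)^{-1}=1$ on $\IR$-points, hence identically (as $\IS(\IR)$ is Zariski dense in $\IS_\IR$), so $c$ factors through $\ker\ad=\IG_m$. Now compose with the norm map $\mathrm{Nm}\colon\CSpin(L)\to\IG_m$: since $c(z)$ is a commutator and $\mathrm{Nm}$ is a homomorphism into the abelian group $\IG_m$, we get $\mathrm{Nm}\circ c\equiv 1$; but $\mathrm{Nm}$ restricted to the central $\IG_m\subset\CSpin(L)$ is $\lambda\mapsto\lambda^2$, so $c$ in fact factors through $\mu_2\subset\IG_m$. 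As $\IS$ is connected and $c$ carries the identity to the identity, $c\equiv 1$, which is exactly the assertion.

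I expect the only genuinely nontrivial point to be this last step: a priori the chosen lift $\wt g$ need not respect $\wt h$, and the discrepancy is a $\IG_m$-valued character of $\IS$ that must be killed. The device is precisely the one above — the norm map annihilates commutators yet acts by squaring on the central $\IG_m$, which forces that character to be $2$-torsion and hence, by connectedness of $\IS$, trivial. Everything else is bookkeeping: Hilbert 90 handles $\IQ$-rationality, and passing from ``commutes with $\wt h(\IS)$'' to ``commutes with $\MT(\wt h)$'' is just the minimality property of the Mumford--Tate group. (Note that this argument in fact works for any $\CSpin$-valued lift of $h$, not only the norm-normalized $\wt h$; the norm normalization is what pins $\wt h$ down uniquely.)
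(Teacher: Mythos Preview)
Your proof is correct, but takes a genuinely different route from the paper's. The paper argues concretely: it takes any lift $\wt g \in \CSpin(L)(\IQ)$ (tacitly using Hilbert 90, which you make explicit), picks a generator $\omega$ of the line $\Fil^1 L_\IC$, and uses the description $\Fil^1 H_\IC = \ker(\omega)$ (where $\omega$ acts on $H$ by left Clifford multiplication) together with $\wt g \omega \wt g^{-1} = \lambda \omega$ to verify directly that $\wt g$ preserves $\Fil^1 H_\IC$. That is a two-line computation once the explicit model of the Kuga--Satake filtration is on the table.

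Your argument is purely group-theoretic: the commutator $c(z) = [\wt g, \wt h(z)]$ lands in the central $\IG_m$, the norm character kills commutators but squares on that $\IG_m$, hence $c$ factors through $\mu_2$, and connectedness of $\IS$ forces $c\equiv 1$. This is clean and makes no reference to the explicit shape of $\Fil^1 H_\IC$; it would go through verbatim for any lift of $h$ to $\CSpin(L)_\IR$, and more generally for any central extension by $\IG_m$ admitting a character that is injective on the center. The paper's argument, by contrast, is tied to the specific Clifford-algebra description but is shorter and immediately shows \emph{why} the Hodge filtration on $H$ is preserved, which is the fact actually used downstream. One small remark: your passage from ``$c$ factors through $\mu_2$'' to ``$c\equiv 1$'' can be phrased without appealing to connectedness by noting that $\Hom(\IS,\IG_m)$ is torsion-free, but your formulation is fine.
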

\begin{proof}
Let $\wt{g} \in \CSpin(L)(\IQ)$ be any lift of $g \in \SO(L)(\IQ)$. In fact, since the kernel of $\ad$ lies in the center of $\CSpin(L)$, one lift works if and only if any other one does. Let $\w$ be any generator of $\Fil^1 L_\IC$. Since $g$ preserves the Hodge structure on $L$ and $\Fil^1 L_\IC$ is one-dimensional, we have $ \wt{g} \w \wt{g}^{-1} = \lambda \w$, or equivalently $\wt{g} \w = \lambda \w \wt{g}, \text{ for some } \lambda \neq 0 \in \IC$. Now if $a \in \Fil^1 H_\IC$, then $\w \wt{g} a = \lambda^{-1} \wt{g} \w a = 0$, which implies that $\wt{g} a \in \Fil^1 H_\IC$. 
\end{proof}

\section{The Kuga-Satake Period Map}
\subsection{Shimura Varieties}
\label{Shimura}
In this section, we review the theory of spinor and orthogonal Shimura varieties that we need. 

\subsubsection{} Let $L$ be a quadratic lattice of signature $((m-2)+, 2-)$ with $m \ge 3$ over $\IZ$. Let $q$ be the quadratic form attached to $L$. Let $p > 2$ be a prime. Assume that $L \tensor \IZ_{(p)}$ is self-dual. Let $G$ (resp. $G^\ad$) denote the reductive group over $\IZ_{(p)}$ given by $\CSpin(L \tensor \IZ_{(p)})$ (resp. $\SO(L \tensor \IZ_{(p)})$). Again let $H$ denote $\Cl(L)$, viewed as a $\Cl(L)$-bimodule. For every compact open subgroup $U$ of $G(\IA_f)$, $G(\IA_f^p)$, or $G(\IQ_p)$, we denote by $U^\ad$ the image of $U$ under the adjoint map $G \to G^\ad$. Set $\fK_p = G(\IZ_p)$. Let $\fK_L \subset G(\IA_f)$ be the intersection $G(\IA_f) \cap \Cl(L \tensor \what{\IZ})^\times$. Then $\fK_L$ (resp. $\fK_L^\ad$) is of the form $\fK_p \fK_{L}^p$ (resp. $\fK^\ad_p \fK_L^{p, \ad}$), where $\fK_L^p$ (resp. $\fK_L^{p, \ad}$) is a compact open subgroup of $G(\IA_f^p)$ (resp. $G^\ad(\IA_f^p)$).

We fix a choice of a non-zero element $\delta \in \det(L) = \wedge^m L$. Since there is a natural embedding $L \into \End(H)$ given by left multiplication, we can also view $\delta$ as an element of $\wedge^m H^{\tensor (1, 1)}$. 

Let $\Ohm$ be the space of oriented negative definite planes in $L_\IR$. Then the pairs $(G_\IQ, \Ohm)$ and $(G^\ad_{\IQ}, \Ohm)$ define Shimura data with reflex field $\IQ$ (cf. \cite[3.1]{CSpin}). We can choose a symplectic form $\psi : H \times H \to \IZ$ such that the embedding $G \into \GL(H)$ induced by the spin representation factors through the general symplectic group $\GSp := \GSp(H, \psi)$ (cf. \cite[3.5]{CSpin}). Let $\sH$ be the Siegel half-spaces attached to $(H, \psi)$. Then we have an embedding of Shimura data $(G, \Ohm) \into (\GSp, \sH)$. 

Let $\sK_H \subset \GSp(\IA_f)$ be the maximal compact open subgroup which stabilizes $H_{\what{\IZ}}$. Then $\sK_H$ is of the form $\sK_{p} \sK_{H}^p$ with $\sK_p \subset \GSp(\IQ_p)$ and $\sK_{H}^p \subset \GSp(\IA^p_f)$. We have that $\fK_p = \sK_p \cap G(\IQ_p)$.

\subsubsection{}
\label{GSpMod} We introduce the moduli interpretation of the Shimura variety $\Sh_{\sK}(\GSp, \sH)$ (cf. \cite[3.7]{CSpin}, see also \cite[1.3.4]{Modp}), assuming that $\sK$ is of the form $\sK_p \sK^p$, where $\sK_p$ is the $p$-part of $\sK_H$ as above and $\sK^p \subset \GSp(\IA^p_f)$ is chosen to be sufficiently small. Let $T$ be a $\IZ_{(p)}$-scheme. Let $\shA(T)$ denote the category of abelian schemes over $T$ and let $\shA(T) \tensor \IZ_{(p)}$ be the category obtained by tensoring the Hom groups in $\shA(T)$ by $\IZ_{(p)}$. An object in $\shA(T) \tensor \IZ_{(p)}$ is called an abelian sheme up to prime-to-$p$ isogeny over $T$. An isomorphism in $\shA(T) \tensor \IZ_{(p)}$ is called a $p'$-quasi-isogeny. Now let $B$ be an object in $\shA(T) \tensor \IZ_{(p)}$ and let $B^*$ be its dual. By a weak polarization on $B$ we mean an equivalence class of $p'$-quasi-isogenies $\lambda : B \to B^*$ such that some multiple of $\lambda$ is a polarization and two such $\lambda$'s are equivalent if they differ by an element in $\IZ_{(p)}^\times$. Let $f : B \to T$ be the structure morphism. For the pair $(B, \lambda)$, denote by $\underline{\Isom}(H \tensor \IA^p_f, R^1 f_{\et *} \underline{\IA}_f^p)$ the set of isomorphisms $\underline{H \tensor \IA_f^p} \stackrel{\sim}{\to} R^1 f_{\et *} \underline{\IA}_f^p$ which are compatible with the pairings induced by $\psi$ and $\lambda$ up to a $(\IA^p_f)^\times$ scalar. Here $\underline{H \tensor \IA_f^p}$ denotes the constant sheaf on $T$ with coefficients $H \tensor \IA_f^p$. A $\sK$-level structure on $(B, T)$ is a section $\varepsilon_{\sK}^p \in H^0(T, \underline{\Isom}(H \tensor \IA^p_f, R^1 f_{\et *} \underline{\IA}_f^p)/\sK^p).$ The functor which sends each $T$ over $\IZ_{(p)}$ to the set of isomorphism classes of triples $(B, \lambda, \varepsilon_{\sK}^p)$ as above is representable by a scheme $\shS_{\sK}(\GSp, \sH)$ over $\IZ_{(p)}$, whose fiber of $\IQ$ is naturally identified with $\Sh_{\sK}(\GSp, \sH)$. 

The above description of points on $\shS_{\sK}(\GSp, \sH)$ or $\Sh_{\sK}(\GSp, \sH)$ depends only on the $\IZ_{(p)}$-lattice $H_{\IZ_{(p)}}$. Since we have a chosen $\IZ$-lattice $H$ inside $H_{\IZ_{(p)}}$ and $\psi$ induces an embedding $H \into H^\vee$, there is a universal abelian scheme over $\shS_{\sK}(\GSp, \sH)$ (cf. \cite[2.3.3]{int}). More precisely, if $d'$ is the index of $H$ in $H^\vee$ under the embedding induced by $\psi$, then for each triple $(B, \lambda, \varepsilon^p_\sK)$ as in the previous paragraph, there is a unique triple of the form $(A \stackrel{q}{\to} T, \mu, \epsilon^p_\sK)$ where $A$ is an abelian scheme over $T$, $\mu$ is a polarization on $A$ of degree $d'$, and $ \epsilon_\sK^p \in H^0(T, \underline{\mathrm{Isom}}(H \tensor \what{\IZ}^p, R^1 q_{\et *} \underline{\what{\IZ}}^p)/ \sK^p)$ such that when we view $A$ only as an abelian scheme up to prime to $p$ isogeny and $\mu$ as a weak polarization on $\lambda$, the triple $(A, \mu, \epsilon_\sK^p \tensor \IQ)$ is isomorphic to $(B, \lambda, \varepsilon_\sK^p)$. Here the (pro)-\'etale sheaf $\underline{\mathrm{Isom}}(H \tensor \what{\IZ}^p, R^1 q_{\et *} \underline{\what{\IZ}^p})$ on $T$ is defined to be the set of the isomorphisms $\underline{H \tensor \what{\IZ}^p} \stackrel{\sim}{\to} R^1 f_{\et *} \underline{\what{\IZ}}^p$ which are compatible with the pairings induced by $\psi$ and $\lambda$ up to a $(\what{\IZ}^p)^\times$ scalar. Note that since we have assumed that $\sK^p$ is sufficiently small, the triple $(B, \lambda, \varepsilon^p_\sK)$, and hence $(A, \mu, \epsilon^p_\sK)$, have no nontrivial automorphisms.

By \cite[Lem.~2.1.2]{int}, for any choice of compact open subgroup $\fK^p \subset G(\IA^p_f)$, there exists a compact open subgroup $\sK^p \subset \GSp(\IA_f^p)$ which contains $\fK^p$ such that for $\sK := \sK_p \sK^p$ and $\fK := \fK_p \fK^p$, the embedding of Shimura data $(G, \Ohm) \into (\GSp, \sH)$ induces an embedding $\Sh_{\fK}(G, \Ohm) \into \Sh_{\sK}(\GSp, \sH)$. When $\fK^p$ and $\sK^p$ are chosen to be sufficiently small, the canonical integral model $\shS_{\fK}(G, \Ohm)$ of $\Sh_{\fK}(G, \Ohm)$ over $\IZ_{(p)}$ is contructed by taking the normalization of the closure of $\Sh_{\fK}(G, \Ohm)$ inside $\shS_{\sK}(\GSp, \sH)$ (cf. \cite[Thm~2.3.8]{int}), and the canonical integral model $\shS_{\fK^\ad}(G^\ad, \Ohm)$ of $\Sh_{\fK^\ad}(G^\ad, \Ohm)$ over $\IZ_{(p)}$ can be constructed out of $\shS_{\fK}(G, \Ohm)$ as a quotient (cf. \cite[2.3.9]{int}, see also \cite[Thm~4.4]{CSpin}). What is important for us is that the natural morphism $\Sh_{\fK}(G, \Ohm) \to \Sh_{\fK^\ad}(G^\ad, \Ohm)$ is a finite \'etale cover which is a torsor of the group (cf. \cite[(3.2)]{CSpin})\footnote{We have communicated to Madapusi Pera and confirmed that in \cite[(3.2)]{CSpin} there should be no ``$>0$" sign in the formula of  $\Delta(K)$. }
\begin{equation}
\label{Delta}
\Delta(K) = \IA^\times_f / \IQ^\times (K \cap \IA^\times_f) = \IA^{p, \times}_f / \IZ_{(p)}^\times (\fK^p \cap \IA^{p, \times}_f)
\end{equation} 
and this morphism extends to a pro-\'etale cover $\shS_{\fK}(G, \Ohm) \to \shS_{\fK^\ad}(G^\ad, \Ohm)$ which is a torsor of the same group. 

\subsubsection{} Next, we introduce the sheaves on spinor or orthogonal Shimura varieties. To begin with, the spin representation $\CSpin(L) \to \GL(H)$ (resp. adjoint representation $\CSpin(L) \to \SO(L)$) gives rise to a variation of $\IZ$-Hodge structures $(\bH_B, \Fil^\bullet \bH_{\dR, \IC})$ (resp. $(\bL_B, \Fil^\bullet \bL_{\dR, \IC})$) on $\Sh_{\fK}(G, \Ohm)_\IC$ (cf. \cite[3.4]{CSpin}). $(\bH_B, \Fil^\bullet \bH_{\dR, \IC})$ carries in addition a $\IZ/2\IZ$-grading, right $\Cl(L)$-action and a tensor $\bpi_B \in H^0(\Sh_K(G, \Ohm)_\IC, \bH_B^{\tensor (2, 2)} \tensor \IQ)$ such that $(\bL_B \tensor \IQ, \Fil^\bullet \bL_{\dR, \IC})$ can be identified with the image $\bpi_B((\bH_B \tensor \IQ)^{\tensor (1, 1)}, \Fil^\bullet \bH_{\dR, \IC})$ and $\bL_B^\vee \subset \bL_B\tensor \IQ$ is identified with $\bpi_B(\bH_B^{\tensor (1, 1)})$ (cf. \cite[1.5]{CSpin}). Moreover, since $\CSpin(L)$ stabilizes $\delta \in \det(L) \subset (\wedge^m H^{\tensor (1, 1)})$, we have a global section $\bd_B \in H^0(\Sh_{\fK}(G, \Ohm)_\IC, \det(\bL_B))$. For the following, assume that $\fK = \fK_p \fK^p$ with $\fK^p \subset \fK_L^{p}$ sufficiently small.

Now let $h : \sA \to \shS_{\fK}(G, \Ohm)$ be the pullback of the universal abelian scheme over $\shS_\sK(\GSp, \sH)$. Let $h_\IQ$ (resp. $h_\IC$, resp. $\bar{h}$) denote the fiber of $h$ over $\IQ$ (resp. $\IC$, resp. $\IF_p$). Let $\bH_\dR$ be the first relative de Rham cohomology of $\sA$ over $\shS_{\fK}(G, \Ohm)$. For every rational prime $\ell$, write $\bH_{\ell, \IQ}$ for $R^1 h_{\IQ, \et *} \underline{\IZ}_\ell$ and for $\ell \neq p$ set $\bH_\ell = R^1 h_{\et *} \underline{\IZ}_\ell$. Let $h_\IC^\an$ be the morphism between complex analytic manifolds associated to $h_\IC$. The variation of $\IZ$-Hodge structures given by the first relative Betti and de Rham cohomology of $h_\IC^\an$ can be identified with the aforementioned $(\bH_B, \Fil^\bullet \bH_{\dR, \IC})$. Let $\bH_\cris$ denote the crystal of vector bundles on the the crystalline site $\Cris(\shS_{\fK}(G, \Ohm)_{\IF_p}/ \IZ_p)$ given by $R^1 \bar{h}_{\cris *} \sO_{\sA \tensor \IF_p / \IZ_p}$.

Since the variation of $\IZ$-Hodge structures $(\bH_B, \Fil^\bullet \bH_{\dR, \IC})$ carries a $\IZ/2 \IZ$-grading and a right $\Cl(L)$-action, the abelian scheme $\sA_\IC$ over $\Sh_{\fK}(G, \Ohm)_\IC$ carries a $\IZ/2 \IZ$-grading and a left $\Cl(L)$-action. The global section $\bpi_B$ gives rise to a global section $\bpi_{\dR, \IC}$ of the vector bundle $\Fil^0 \bH_{\dR, \IC}^{\tensor (2, 2)}$ and a global section $\bpi_{\ell, \IC}$ of $\bH^{\tensor (2, 2)}_{\ell, \IC}$, where $\bH_{\ell, \IC}$ is the \'etale local system over $\Sh_\fK(G, \Ohm)_\IC$ obtained by restricting $\bH_\ell$. Similarly, $\bd_B$ gives rise a global section $\bd_{\ell, \IC}$ of $\wedge^m \bH^{\tensor (1, 1)}_{\ell, \IC}$. 

Now we explain how to equip $\sA$ with ``CSpin-structures''. Results here are taken from \cite[(2.2)]{int} and \cite[Prop. 3.11, Sect.~4.5]{CSpin}. 
The $\IZ/2 \IZ$-grading and the left $\Cl(L)$-action on $\sA_\IC$ descend to $\sA_\IQ$ and extend to $\sA$. The global sections $\bpi_{\ell, \IC}$ and $\bd_{\ell, \IC}$ descend to global sections $\bpi_{\ell, \IQ}$ and $\bd_{\ell, \IQ}$ of $\bH^\tensor_{\ell, \IQ}$ and if $\ell \neq p$, extend to global sections $\bpi_{\ell}$ and $\bd_{\ell}$ of $\bH_\ell^\tensor$. Similarly, the de Rham section $\bpi_{\dR, \IC}$ descend to a global section $\bpi_{\dR, \IQ}$ of $\bH_{\dR, \IQ}^\tensor := \bH_{\dR}^\tensor |_{\Sh_\fK(G, \Ohm)}$ and extend to a global section $\bpi_\dR$ of $\bH_\dR^\tensor$ (cf. \cite[Cor.~2.3.9]{int}). Finally, we introduce the crystalline realization of $\pi$. Let $\nabla$ be the Gauss-Manin connection on $\bH_\dR$. Since $\shS_{\fK}(G,\Ohm)$ is smooth, the crystal of vector bundles $\bH_\cris$ is completely determined by the restriction of the pair $(\bH_\dR, \nabla)$ to the completion of $\shS_{\fK}(G, \Ohm)$ along the special fiber. Since the global section $\bpi_\dR$ of $\bH_\dR^{\tensor (2, 2)}$ is horizontal with respect to $\nabla$, it gives rise to a global section $\bpi_\cris$ of $\bH_\cris$ (cf. \cite[4.14]{CSpin}).

\begin{remark}
\label{dRham}
The following can be extracted from the proof of \cite[2.3.5, 2.3.9]{int}, or \cite[Prop.~4.7]{CSpin}:
Let $k$ be a perfect field of characteristic $p$. Let $K$ be a totally ramified extension of $W(k)[1/p]$, $\sO_K$ be the ring of integers of $K$ and $\bar{K}$ be an algebraic closure of $K$. Let $s \in \shS_{\fK}(G, \Ohm)(k)$ be a point. Let $s_K$ be a $K$-valued point on $\shS_{\fK}(G, \Ohm)$ and $\bar{s}_K$ be the geometric point over $s_K$ associated to $\bar{K}$. Suppose that $s_K$ specializes to $s$ via an $\sO_K$-valued point. Under the de Rham comparison isomorphism 
$$ H^1_\dR(\sA_{s}/K) \tensor_K B_\dR \stackrel{\sim}{\to} H^1_\et(\sA_{s_K} \tensor \bar{K}, \IQ_p) \tensor_{\IQ_p} B_\dR $$
$\pi_{\dR, s_K}$ is sent to $\pi_{p, \bar{s}_K}$ by a result of Blasius \cite[Thm~0.3]{Blasius} and Wintenberger.  $\bpi_{\cris, s}$ is sent to $\bpi_{\dR, s_K}$ via the Berthelot-Ogus isomorphism $H^1_\cris(\sA_s/W(k)) \tensor_{W(k)} K \iso H^1_\dR(\sA_{s_K}/K)$.

In \textit{loc. cit.}, Blasius restricted to considering abelian varieties which can be obtained via base change from the ones defined over $\bar{\IQ}$, but this condition can be removed (cf. \cite[Thm~5.6.3]{Moonen}). The original reference for the de Rham comparison isomorphism in \cite{Blasius} is \cite[Thm~8.1]{Fal}. It is remarked in \cite[Sect.~11.3]{IIK} that for Blasius' theoerem one can alternatively use the de Rham comparison isomorphism constructed by Scholze (cf. \cite[Thm~8.4]{Scholze}, see also \cite[Sect.~11.1]{IIK}). For our purposes, all that is important is that $\bpi_{\dR, s_K}$, and hence $\bpi_{\cris, s}$, are completely determined by $\bpi_{p, \bar{s}_K}$. 
\end{remark}

The $\IZ/2\IZ$-grading, $\Cl(L)$-action and various realizations of the tensor $\pi$ are the ``CSpin-structure'' on $\sA$. Note that we do not think of realizations of $\delta$ as part of the ``CSpin-structure'' because $\delta$ is uncessary in cutting out $\CSpin(L)$ inside $\GL(H)$. Later will only make use of $\bd_B$ and $\bd_{\ell}$ in the construction of the integral period morphism.

\subsection{Isogeny Classes}\label{expisog} Consider the limits $\shS_{\fK_p}(G, \Ohm) = \varprojlim_{\fK^p} \shS_{\fK_p \fK^p}(G, \Ohm)$ and $\Sh_{\fK_p}(G, \Ohm) = \varprojlim_{\fK^p} \Sh_{\fK_p \fK^p}(G, \Ohm)$ where $\fK^p$ varies over compact open subgroups of $G(\IA_f^p)$. Similarly, take the limit $\shS_{\sK_p}(\GSp, \sH)$. We explain the notion of isogeny classes of points on $\shS_{\fK_p}(G, \Ohm)$ under the assumption that $\psi$ induces a self-dual pairing on $H_{\IZ_{(p)}}$\footnote{This assumption is imposed in \cite[(1.3.3)]{Modp}. We do not want to replace $H$ by $H^{\tensor (4, 4)}$.}. 

\begin{definition}
\label{CSpin-isog} 
Let $k$ be a perfect field and $s, s' \in \shS_{\fK}(G, \Ohm)(k)$. Let $f : \sA_s \to \sA_{s'}$ be a quasi-isogeny over $k$ which respects the $\IZ/2\IZ$-grading and $\Cl(L)$-action on $\sA_s, \sA_{s'}$. Let $\bar{k}$ be an aglebraic closure of $k$. If $\mathrm{char\,} k = 0$, we say $f$ is a \textit{CSpin-isogeny} if it sends $\bpi_{\ell, s \tensor \bar{k}}$ to $\bpi_{\ell, s' \tensor \bar{k}}$ for every prime $\ell$. If $\mathrm{char\,} k = p$, we say $f$ is a \textit{CSpin-isogeny} if it sends $\bpi_{\ell, s \tensor \bar{k}}$ to $\bpi_{\ell, s' \tensor \bar{k}}$ for every prime $\ell \neq p$ and $\bpi_{\cris, s}$ to $\bpi_{\cris, s'}$. 
\end{definition}

\subsubsection{}\label{3.2.1} We now begin to explain the notion of isogeny classes for $\bar{\IF}_p$ points, following \cite{Modp}. We write $W$ for $W(\bar{\IF}_p)$ and $K_0$ for $W[1/p]$. Let $s$ be an $\bar{\IF}_p$-point on $\shS_{\fK_p}(G, \Ohm)$. There is a triple $(\sA_s, \mu_s, \epsilon^p_s)$ attached to $s$, where $(\sA_s, \mu_s)$ is a polarized abelian variety over $\bar{\IF}_p$ and $\epsilon^p_s$ is an isomorphism $H \tensor \what{\IZ}^p \to H^1_\et(\sA_s, \what{\IZ}^p)$ which respects the pairings induced by $\psi$ and $\mu_s$ up to a $(\IA_f^p)^\times$-multiple. Moreover, $\epsilon^p_s$ respects the $\IZ/2\IZ$-grading and $\Cl(L)$-action and sends $\pi$ to $\bpi_{\IA^p_f, s}$ (cf. \cite[(1.3.6)]{Modp}). 

By the extension property of the integral model $\shS_{\fK_p}(G, \Ohm)$, the (right) $G(\IA_f^p)$-action on $\Sh_{\fK_p}(G, \Ohm)$ extends to $\shS_{\fK_p}(G, \Ohm)$. Take $g^p \in G(\IA_f^p)$. If $s' = s \cdot g^p$, then the triple $(\sA_{s'}, \mu_{s'}, \epsilon_{s'}^p)$ is isomorphic to the triple $(\sA_s, \mu_s, \epsilon_s^p \circ g^p)$ when we view $(\sA_{s'}, \mu_{s'})$ and $(\sA_s, \mu_s)$ as weakly polarized abelian schemes up to prime-to-$p$ isogeny. In other words, there exists a $p'$-quasi-isogeny $f : \sA_s \to \sA_{s'}$ which sends $\mu_{s'}$ to a $(\IA^p_f)^\times$-multiple of $\mu_s$ and the composition 
$$ H \tensor \IA^p_f \stackrel{\epsilon_{s'}^p \tensor \IQ}{\to} H^1_\et(\sA_{s'}, \IA_f^p) \stackrel{f^*}{\to} H^1_\et(\sA_s, \IA_f^p) $$
is equal to $(\epsilon_s^p \tensor \IQ) \circ g^p$. Such $f$ is unique, as the induced map $H^1_\et(\sA_{s'}, \IA^p_f) \to H^1_\et(\sA_s, \IA^p_f)$ is completely determined. We argue that $f$ is a CSpin-isogeny. We only need to check that $f$ sends $\bpi_{\cris, s}$ to $\bpi_{\cris, s'}$. This is easily done by a lifting argument: Lift $s$ to a $W$-point $s_W$ on $\shS_{\fK_p}(G, \Ohm)$ and set $s'_W := s_W \cdot g^p$. Then $s'_W$ is a $W$-point which lifts $s'$. Moreover, there is a $p'$-quasi-isogeny $f_W : \sA_{s_W} \to \sA_{s_W'}$ which lifts $f$. Let $s_{K_0}$ and $s_{K_0}'$ be the generic points of $s_W$ and $s_W'$. Choose an algebraic closure $\bar{K}_0$ of $K_0$ and let $\bar{s}_{K_0}, \bar{s}_{K_0}'$ be the corresponding geometric points over $s_{K_0}, s_{K_0}'$. We argue that $f_W \tensor K_0 : \sA_{s_{K_0}} \to \sA_{s_{K_0}'}$ sends $\bpi_{p, \bar{s}_{K_0}}$ to $\bpi_{p, \bar{s}'_{K_0}}$. Choose an isomorphism $\bar{K}_0 \iso \IC$ and let $s_\IC, s_\IC'$ be the $\IC$-points given by $\bar{s}_{K_0}, \bar{s}_{K_0}'$. It suffices to check that $f_W \tensor \IC : \sA_{s_\IC} \to \sA_{s_\IC'}$ sends $\bpi_{B, s}$ to $\bpi_{B, s'}$, but this is clear: By the smooth and proper base change theorem, $f_W \tensor \IC$ sends $\bpi_{\IA_f^p, s_\IC}$ to $\bpi_{\IA^p_f, s_\IC'}$ as $f$ sends $\bpi_{\IA_f^p, s}$ to $\bpi_{\IA_f^p, s'}$. One deduces that $f$ sends $\bpi_{\cris, s}$ to $\bpi_{\cris, s'}$ using that $f_W \tensor K_0$ sends $\bpi_{p, \bar{s}_{K_0}}$ to $\bpi_{p, \bar{s}'_{K_0}}$ (cf. Rmk~\ref{dRham}). 

\subsubsection{}\label{pisog} We fix an isomorphism of $\IZ/2\IZ$-graded right $\Cl(L)$-modules $H^1_\cris(\sA_s/W) \iso H \tensor W$ which sends $\bpi_{\cris, s}$ to $\pi$ (cf. \cite[Prop.~4.7]{CSpin}).  Then the Frobenius action on $H^1_\cris(\sA_s /W)$ takes the form $b\sigma$ for some $b \in G(W)$. By \cite[Lem.~1.1.12]{Modp}, there exists a $G_W$-valued cocharacter $\nu$ such that  
$b \in G(W) \nu(p) G(W)$ and $\sigma^{-1}(\nu)$ gives the filtration on $H^1_\dR(\sA_s/\bar{\IF}_p)$. Now define $$X_p := \{ g_p \in G(K_0)/G(W) : g_p^{-1} b \sigma(g_p) \in G(W) \nu(p) G(W) \}. $$
Let $\shG_s$ be the $p$-divisible group of $\sA_s$. If $g_p \in X_p$, then $g_p \cdot \ID(\shG_s)$ is stable under Frobenius and satisfies the axioms of a Dieudonn\'e module. Hence $g_p \cdot \ID(\shG_s)$ corresponds to a $p$-divisible group $\shG_{g_p s}$ naturally equipped with a quasi-isogeny $\shG_s \to \shG_{g_p s}$. We denote by $\sA_{g_ps}$ the corresponding abelian variety, such that the $p$-divisible group of $\sA_{g_p s}$ is identified with $\shG_{g_p s}$ and there is a quasi-isogeny $f_{g_p} : \sA_{s} \to \sA_{g_p s}$ which induces $\shG_s \to \shG_{g_p s}$. We can view $\sA_{g_p s}$ as an object in $\shA(\bar{\IF}_p) \tensor \IZ_{(p)}$ and equip it with a weak polarization and a level structure using those on $\sA_s$ via $f_{g_p}$. This gives rise to a point in $\shS_{\sK_p}(\GSp, \sH)(\bar{\IF}_p)$, which we denote by $g_p s$. Thus we obtain a map $X_p \to \shS_{\sK_p}(\GSp, \sH)(\bar{\IF}_p)$ (cf. Sect.~\ref{GSpMod}) by sending $g_p$ to $g_p s$. By \cite[Prop.~1.4.4]{Modp}, this map lifts uniquely to a map $\iota_{p} : X_p \to \shS_{\fK_p}(G, \Ohm)(\bar{\IF}_p)$ such that $f_{g_p} : \sA_s \to \sA_{\iota_{p}(g_p)} = \sA_{g_p s}$ respects the $\IZ/2\IZ$-grading, $\Cl(L)$-action and sends $\bpi_{\cris, s}$ to $\bpi_{\cris, \iota_{p}(g_p)}$. Since the level structure on $\sA_{\iota_{p}(g_p)}$ is induced from the one on $\sA_{s}$ via $f_{g_p}$, $f_{g_p}$ sends $\bpi_{\IA^p_f, s}$ to $\bpi_{\IA^p_f, \iota_{p}(g_p)}$. Therefore, $f_{g_p s}$ is a CSpin-isogeny.

Set $X^p = G(\IA_f^p)$. We construct a map $\iota_s : X_p \times X^p \to \shS_{\fK_p}(G, \Ohm)(\bar{\IF}_p)$ by sending the pair $(g_p, g^p)$ to $[\iota_{p}(g_p)] \cdot g^p$. The image of $\iota_s$ is called the \textit{isogeny class} of $s$. By \cite[Prop.~1.4.15]{Modp} and its proof, we have: 

\begin{proposition}
\label{1.4.15}
Let $s \in \shS_{\fK_p}(G, \Ohm)(\bar{\IF}_p)$ be a point. If $s' = [\iota_{p}(g_p)] \cdot g^p$ for $(g_p, g^p) \in X_p \times X^p$, then there exists a CSpin-isogeny $f : \sA_s \to \sA_{s'}$ such that $f^* H^1_\cris(\sA_{s'}/W) = g_p \cdot H^1_\cris(\sA_{s}/W)$ and $f^* H^1_\et(\sA_{s'}, \what{\IZ}^p) = g^p \cdot H^1_\et(\sA_s, \what{\IZ}^p)$.  
\end{proposition}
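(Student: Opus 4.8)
The plan is to produce $f$ as the composite of the two elementary operations out of which the parametrization $\iota_s$ of the isogeny class was built in Sections~\ref{pisog} and~\ref{3.2.1}. Write $s_1 := \iota_p(g_p) \in \shS_{\fK_p}(G, \Ohm)(\bar{\IF}_p)$, so that by hypothesis $s' = [s_1] \cdot g^p$: the passage $s \leadsto s_1$ is the $p$-adic adjustment governed by $g_p \in X_p$, and the passage $s_1 \leadsto s'$ is the prime-to-$p$ translation by $g^p \in X^p = G(\IA^p_f)$. Both halves have essentially been carried out in the text already, so the work reduces to assembling them and checking that the crystalline and prime-to-$p$ lattices compose as asserted; alternatively one can simply quote \cite[Prop.~1.4.15]{Modp} for the bare existence of a quasi-isogeny $f$ with the stated cohomological identities and add only the verification that $f$ respects the CSpin-structure of Definition~\ref{CSpin-isog}.

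For the $p$-adic step, recall from Section~\ref{pisog} that $\sA_{s_1}$ is the abelian variety (up to prime-to-$p$ isogeny) whose Dieudonné module is $g_p \cdot \ID(\shG_s)$, equipped with the quasi-isogeny $f_{g_p} : \sA_s \to \sA_{s_1}$ inducing $\shG_s \to \shG_{g_p s}$ and carrying the weak polarization and prime-to-$p$ level structure of $s$ to those of $s_1$. Hence $f_{g_p}$ is an isomorphism on prime-to-$p$ cohomology compatibly with the level structures $\epsilon^p$; it respects the $\IZ/2\IZ$-grading and $\Cl(L)$-action and sends $\bpi_{\cris, s}$ to $\bpi_{\cris, s_1}$ by \cite[Prop.~1.4.4]{Modp}; and, the level structure on $s_1$ being transported from $s$, it sends $\bpi_{\IA^p_f, s}$ to $\bpi_{\IA^p_f, s_1}$ and each $\bpi_{\ell, s}$ to $\bpi_{\ell, s_1}$. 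Thus $f_{g_p}$ is a CSpin-isogeny with $f_{g_p}^* H^1_\cris(\sA_{s_1}/W) = g_p \cdot H^1_\cris(\sA_s/W)$ and $f_{g_p}^* H^1_\et(\sA_{s_1}, \what{\IZ}^p) = H^1_\et(\sA_s, \what{\IZ}^p)$.

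For the prime-to-$p$ step, I would run the analysis of the $G(\IA^p_f)$-action from Section~\ref{3.2.1} verbatim with $s_1$ in place of $s$. This yields a $p'$-quasi-isogeny $f' : \sA_{s_1} \to \sA_{s'}$ taking $\mu_{s_1}$ to a $(\IA^p_f)^\times$-multiple of $\mu_{s'}$, with $f'^* \circ (\epsilon^p_{s'} \tensor \IQ) = (\epsilon^p_{s_1} \tensor \IQ) \circ g^p$, hence $f'^* H^1_\et(\sA_{s'}, \what{\IZ}^p) = g^p \cdot H^1_\et(\sA_{s_1}, \what{\IZ}^p)$; being a $p'$-quasi-isogeny it is an isomorphism on $p$-divisible groups, so $f'^* H^1_\cris(\sA_{s'}/W) = H^1_\cris(\sA_{s_1}/W)$. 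That $f'$ is a CSpin-isogeny follows exactly as in Section~\ref{3.2.1}: the prime-to-$p$ tensors are preserved because $g^p \in G(\IA^p_f)$ stabilizes $\pi$ by Lemma~\ref{stabilizer}(c), and $\bpi_{\cris}$ is handled by lifting $s_1$ to a $W$-point (possible since $\shS_{\fK_p}(G,\Ohm)$ is smooth), translating by $g^p$, spreading $f'$ to the generic fibre, passing to $\IC$, and invoking smooth--proper base change together with Remark~\ref{dRham}.

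Finally I would set $f := f' \circ f_{g_p} : \sA_s \to \sA_{s'}$. A composite of CSpin-isogenies is again a CSpin-isogeny, since the $\IZ/2\IZ$-grading, the $\Cl(L)$-action, and the tensors $\bpi_\ell$ $(\ell \neq p)$ and $\bpi_\cris$ are all preserved under composition. Combining the two computations gives $f^* H^1_\cris(\sA_{s'}/W) = f_{g_p}^* H^1_\cris(\sA_{s_1}/W) = g_p \cdot H^1_\cris(\sA_s/W)$ and $f^* H^1_\et(\sA_{s'}, \what{\IZ}^p) = f_{g_p}^*\big(g^p \cdot H^1_\et(\sA_{s_1}, \what{\IZ}^p)\big) = g^p \cdot H^1_\et(\sA_s, \what{\IZ}^p)$, where the last equality uses that $f_{g_p}^*$ identifies the two prime-to-$p$ lattices compatibly with the level structures $\epsilon^p$ and is therefore equivariant for the $G(\IA^p_f)$-action defined through them. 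The only step with genuine content beyond bookkeeping is the one already isolated in Section~\ref{3.2.1}: checking that the prime-to-$p$ quasi-isogeny $f'$ carries $\bpi_{\cris, s_1}$ to $\bpi_{\cris, s'}$, which is invisible at the level of $p$-divisible groups and must be extracted from a characteristic-zero lift via the de Rham comparison isomorphism; everything else is assembly of constructions already in place.
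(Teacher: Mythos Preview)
Your proposal is correct and follows essentially the same approach as the paper. The paper does not give a separate proof but simply records the proposition as a consequence of \cite[Prop.~1.4.15]{Modp} together with the discussion in Sections~\ref{3.2.1} and~\ref{pisog}, which already establish that both $f_{g_p}$ and the $p'$-quasi-isogeny arising from the $G(\IA^p_f)$-action are CSpin-isogenies; your argument is a faithful unpacking of exactly those ingredients and their composition.
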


\subsubsection{} We now study the images on $\shS_{\fK^\ad_p}(G^\ad, \Ohm)(\bar{\IF}_p)$ of isogeny classes on $\shS_{\fK_p}(G, \Ohm)(\bar{\IF}_p)$. Let $s \in \shS_{\fK_p}(G, \Ohm)(\bar{\IF}_p)$ and let $X_p, X^p$ be as above. Let $X^{\ad}_p$ and $X^{\ad, p}$ be the quotients of $X_p$ and $X^p$ under the action of $\IG_m(\IQ_p)$ and $\IG_m(\IA^p_f)$ respectively. 
\begin{lemma}
$\iota_s$ descends to a map $\iota^\ad_s$ which fits into a commutative diagram 
\begin{center}
\begin{tikzcd}
X_p \times X^p \arrow{r}{\iota_s} \arrow{d}{} & \shS_{\fK_p}(G, \Ohm)(\bar{\IF}_p) \arrow{d}{} \\
X^\ad_p \times X^{\ad, p} \arrow{r}{\iota^\ad_s} & \shS_{\fK^\ad_p}(G^\ad, \Ohm)(\bar{\IF}_p)
\end{tikzcd}
\end{center}
\end{lemma}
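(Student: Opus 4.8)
The plan is to reduce the statement to a single identity about the map $\iota_p$ of \S\ref{pisog} and then verify that identity by bookkeeping of linear-algebraic data. Since $\iota^\ad_s$, if it exists, is forced to be the unique map making the square commute, what has to be shown is that $\mathrm{pr}\circ\iota_s$, with $\mathrm{pr}\colon\shS_{\fK_p}(G,\Ohm)\to\shS_{\fK^\ad_p}(G^\ad,\Ohm)$ the natural morphism, is invariant under the $\IG_m(\IQ_p)\times\IG_m(\IA_f^p)$-action on $X_p\times X^p$. First I would use that $\iota_s(g_p,g^p)=[\iota_p(g_p)]\cdot g^p$ and that the $G(\IA_f^p)$-Hecke action extends to the integral models (cf.\ \S\ref{3.2.1}, \cite[2.3.9]{int}), so that $\mathrm{pr}$ is equivariant for $\ad\colon G(\IA_f^p)\to G^\ad(\IA_f^p)$. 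By the exact sequence \eqref{coreExt} the subgroup $\IG_m(\IA_f^p)$ is killed by $\ad$, so $\mathrm{pr}(s\cdot g^p)=\mathrm{pr}(s)\cdot\ad(g^p)=\mathrm{pr}(s)$ for $g^p\in\IG_m(\IA_f^p)$; combined with equivariance this reduces everything to the claim that $\mathrm{pr}(\iota_p(z_p g_p))=\mathrm{pr}(\iota_p(g_p))$ for $z_p\in\IG_m(\IQ_p)$. Writing $z_p=p^nu$ with $u\in\IZ_p^\times=\IG_m(\IZ_p)\subset\fK_p\subset G(W)$ acting trivially on $G(K_0)/G(W)\supset X_p$, and using that $\IG_m(\IQ_p)$ acts on $X_p$ through the group law, it is enough to treat $z_p=p$.

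The core step is to prove that $\iota_p(p\,g_p)=\iota_p(g_p)\cdot z^p$ in $\shS_{\fK_p}(G,\Ohm)(\bar{\IF}_p)$, where $z^p\in\IG_m(\IA_f^p)$ is the idèle equal to the prime-to-$p$ unit $p^{-1}$ at every place $\ell\neq p$; this immediately yields the desired equality since $\ad(z^p)=1$. For the identity I would invoke that, by \cite[Prop.~1.4.4]{Modp} (cf.\ \S\ref{pisog}), a lift to $\shS_{\fK_p}(G,\Ohm)(\bar{\IF}_p)$ of a point of $\shS_{\sK_p}(\GSp,\sH)(\bar{\IF}_p)$ is uniquely characterised by the requirement that the canonical quasi-isogeny out of $\sA_s$ be a CSpin-isogeny with prescribed cohomological effect; hence it suffices to match, on the two sides, the triple $(\sA,\mu,\epsilon^p)$ together with its CSpin-structure and the accompanying quasi-isogeny. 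Now $\ID(\shG_{p g_p\cdot s})=p\cdot\ID(\shG_{g_p\cdot s})$, and multiplication by $p^{-1}$ identifies this Dieudonné module with $\ID(\shG_{g_p\cdot s})$; as $g_p$ changes nothing away from $p$, this upgrades to an isomorphism $\phi\colon\sA_{p g_p\cdot s}\xrightarrow{\sim}\sA_{g_p\cdot s}$ with $f_{p g_p}=\phi^{-1}\circ f_{g_p}\circ[p]_{\sA_s}$, where $[p]_{\sA_s}$ is the action of the central element $p\in\IG_m\subset G$ through the spin representation. Since $[p]_{\sA_s}$ acts by $p$ on first-degree cohomology, $\phi$ carries the prime-to-$p$ level structure of $\sA_{p g_p\cdot s}$ to $p^{-1}$ times, and its weak polarization to $\Nm(p^{-1})=p^{-2}$ times, those of $\sA_{g_p\cdot s}$ — exactly the effect of the central Hecke operator $z^p$ (cf.\ \S\ref{3.2.1}) — whereas every realization of $\bpi$ is unchanged because $p^{-1}\in\ker(\ad)$, so both $z^p$ and $[p]_{\sA_s}$ fix $\bpi$ (the central $\IG_m$ acting trivially on the $(2,2)$-tensor space, functoriality of the CSpin-tensors in quasi-isogenies being through the contragredient on duals); the crystalline realization is untouched because $z^p$ is prime-to-$p$ while $\phi$ preserves the relevant $(2,2)$-tensor lattice. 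Uniqueness of the CSpin-lift then closes the identity.

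I expect the main obstacle to be precisely this last comparison: trading the $p$-adic operation ``scale the Dieudonné lattice by $p$'' for the away-from-$p$ operation ``apply the central Hecke operator $p^{-1}$''. A priori these belong to disjoint parts of the formalism, and their reconciliation rests on the central torus $\IG_m\subset G$ acting through $\ad$, hence trivially, on all tensors defining the CSpin-structure, while acting by genuine scalars on the spin representation; one must be careful that the compensating powers of $p$ in the polarization, the level structure, the Hodge filtration and both realizations of $\bpi$ all cancel, which is cleanest to organize using the defining properties of the canonical models and the $\Delta$-torsor structure \eqref{Delta} relating $\shS_{\fK_p}(G,\Ohm)$ and $\shS_{\fK^\ad_p}(G^\ad,\Ohm)$, together with Kisin's uniqueness statement. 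If one prefers to avoid the explicit Hecke identity, an alternative is to use that $\mathrm{pr}$ is a torsor under $\Delta=\IA^{p,\times}_f/\IZ_{(p)}^\times$ realized by the central Hecke action, so it suffices that $\iota_p(p g_p)$ and $\iota_p(g_p)$ share a central Hecke orbit; this in turn follows from the fact that the $\bL$-crystal with its Frobenius and Hodge filtration, and the prime-to-$p$ $\bL$-lattice, attached to the two points coincide — they depend only on $\ad(g_p)=\ad(p g_p)$ — provided one has the corresponding uniqueness for the abelian-type integral model $\shS_{\fK^\ad_p}(G^\ad,\Ohm)$.
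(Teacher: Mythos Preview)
Your approach is correct and follows the same underlying idea as the paper's, but is considerably more hands-on where the paper is terse. The paper's proof is two sentences: it cites \cite[Cor.~1.4.13]{Modp} for the equivariance of $\iota_s$ under the action of $\IG_m(\IQ_p)\times\IG_m(\IA_f^p)\subset Z_G(\IQ_p)\times G(\IA_f^p)$, then notes that since $\shS_{\fK_p}(G,\Ohm)\to\shS_{\fK^\ad_p}(G^\ad,\Ohm)$ is a $\Delta$-torsor (cf.\ \eqref{Delta}) and the $\IG_m(\IQ_p)\times\IG_m(\IA_f^p)$-action factors through $\Delta$ via the obvious quotient map (checked on the generic fiber), the descent follows. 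Your ``core step'' identity $\iota_p(p\,g_p)=\iota_p(g_p)\cdot z^p$ is exactly the content of Kisin's equivariance statement in this case, so you are reproving it by explicit bookkeeping with Dieudonn\'e modules and level structures rather than invoking it. Your alternative at the end---using the $\Delta$-torsor structure and observing that the relevant $\bL$-data depends only on $\ad(g_p)$---is essentially the paper's argument. What your longer route buys is self-containment (one sees concretely why the $p$-adic rescaling is absorbed by a prime-to-$p$ central Hecke operator); what the paper's buys is brevity, since the equivariance is already packaged in Kisin's work.
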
 
\begin{proof}
$\iota_s$ is equivariant under the action of  $\IG_m(\IQ_p) \times \IG_m(\IA^p_f) \subset Z_G(\IQ_p) \times G(\IA_f^p)$ (cf. \cite[Cor.~1.4.13]{Modp} ). By (\ref{Delta}), $\shS_{\fK_p}(G, \Ohm) \to \shS_{\fK^\ad_p}(G^\ad, \Ohm)$ is a pro-\'etale cover which is a torsor of the group $\Delta = \IG_m(\IA_f^p)/ \IG_m(\IZ_{(p)})$. One may easily check, for example by examining the generic fiber, that the action of $\IG_m(\IQ_p) \times \IG_m(\IA^p_f)$ factors through $\Delta$ via the quotient map 
$$ \IG_m(\IQ_p) \times \IG_m(\IA^p_f) \to [\IG_m(\IQ_p)/\IG_m(\IZ_p) \times \IG_m(\IA^p_f)]/\IG_m(\IQ) \stackrel{\sim}{\to} \IG_m(\IA_f^p)/ \IG_m(\IZ_{(p)}) = \Delta $$

\end{proof}

The isomorphism $H^1_\cris(\sA_s/W) \iso H \tensor W$ we fixed in section~\ref{pisog} gives us an isometry $\bL_{\cris, s} \iso L \tensor W$. Set $\nu^\ad := \ad \circ \nu$ and $b^\ad := \ad \circ b$.  
\begin{lemma}
\label{surjatp}
 $X^\ad_p = \{ g_p' \in G^\ad(K_0)/G^\ad(W) :  (g_p')^{-1} b^\ad \sigma(g_p') \in G^\ad(W) {\nu^\ad}(p) G^\ad(W) \} $
\end{lemma}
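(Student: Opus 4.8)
The plan is to show that the adjoint map $\ad\colon G\to G^\ad$ induces a bijection from $X^\ad_p$ onto the set, call it $Y$, appearing on the right-hand side of the claimed identity. Since only the double coset $G(W)\nu(p)G(W)$ — hence only the Weyl-orbit of $\nu$ — enters the definition of $X_p$, I may and will assume $\nu$ is dominant with respect to a fixed Borel and split maximal torus $T\subset G_W$; here $G_W$ is split because it is reductive over the strictly Henselian ring $W$, and likewise $\nu^\ad=\ad\circ\nu$ is dominant in $G^\ad_W$.

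First I would construct the map. Base-changing $(\ref{coreExt})$ to $W$ and to $K_0$ and using $\Pic(W)=0$ and $\Pic(K_0)=0$, the homomorphisms $\ad\colon G(W)\to G^\ad(W)$ and $\ad\colon G(K_0)\to G^\ad(K_0)$ are surjective, so $\ad$ induces a surjection $G(K_0)/G(W)\twoheadrightarrow G^\ad(K_0)/G^\ad(W)$. A short computation, using that $\sigma$ acts trivially on $\IG_m(\IQ_p)=\IQ_p^\times$ and that $\ker(\ad)=\IG_m$ is central, shows that this surjection carries $X_p$ into $Y$ and is constant on $\IG_m(\IQ_p)$-orbits, hence descends to a map $\ad\colon X^\ad_p\to Y$.

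Injectivity is elementary: if $g,h\in X_p$ satisfy $\ad(g)G^\ad(W)=\ad(h)G^\ad(W)$ then $\ad(h^{-1}g)\in G^\ad(W)$, so surjectivity of $\ad$ on $W$-points gives $h^{-1}g\in\IG_m(K_0)\,G(W)$; since $K_0$ is the fraction field of the complete discrete valuation ring $W$ with uniformizer $p$ we have $\IG_m(K_0)=\IG_m(W)\cdot p^{\IZ}$, and as $\IG_m(W)\subset G(W)$ and $p^{\IZ}\subset\IG_m(\IQ_p)$ this forces $h^{-1}g\in\IG_m(\IQ_p)\,G(W)$, i.e. $g$ and $h$ agree in $X^\ad_p$.

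Surjectivity is the heart of the matter, and the step I expect to be the main obstacle. Given $g'\in Y$, lift it to $\tilde g\in G(K_0)$ with $\ad(\tilde g)G^\ad(W)=g'G^\ad(W)$; replacing $g'$ by $\ad(\tilde g)$ we may assume $g'=\ad(\tilde g)$, so $c:=\tilde g^{-1}b\,\sigma(\tilde g)$ satisfies $\ad(c)\in G^\ad(W)\nu^\ad(p)G^\ad(W)$. By the Cartan decomposition of $G(K_0)$ with respect to $W$ (valid since $G_W$ is split), $c\in G(W)\mu(p)G(W)$ for a unique dominant $\mu\in X_*(T)$. Applying $\ad$, comparing with the Cartan decomposition of $G^\ad(K_0)$, and using that the central cocharacter $\lambda_0\colon\IG_m\hookrightarrow T$ given by the inclusion in $(\ref{coreExt})$ is Weyl-fixed and pairs to zero with every root — so that the dominant cocharacters of $G$ lying above $\nu^\ad$ are exactly $\{\nu+n\lambda_0:n\in\IZ\}$ — I get $\mu=\nu+n\lambda_0$ for some $n\in\IZ$. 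To force $n=0$ I would invoke the Clifford norm $\Nm\colon G\to\IG_m$, whose restriction to the central $\IG_m$ is $t\mapsto t^2$: writing $\Nm_*\colon X_*(T)\to X_*(\IG_m)=\IZ$ for the induced map, $\Nm_*(\lambda_0)=2$, so $\Nm(\mu(p))=p^{\Nm_*(\nu)+2n}$ while $\Nm(\nu(p))=p^{\Nm_*(\nu)}$. On the other hand $\Nm$ is defined over $\IZ_p$, hence $\sigma$-equivariant, and $v_p\circ\sigma=v_p$; evaluating $\Nm$ on $c=\tilde g^{-1}b\,\sigma(\tilde g)$ gives $v_p(\Nm(c))=v_p(\Nm(b))=\Nm_*(\nu)$, whereas $c\in G(W)\mu(p)G(W)$ gives $v_p(\Nm(c))=\Nm_*(\mu)=\Nm_*(\nu)+2n$. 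Hence $n=0$, so $c\in G(W)\nu(p)G(W)$, i.e. $\tilde g\,G(W)\in X_p$ with image $g'$ in $Y$. This gives surjectivity and completes the argument.
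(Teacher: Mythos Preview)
Your proposal is correct and follows essentially the same approach as the paper: both arguments reduce surjectivity to showing that a lift $\tilde g$ of $g'$ lies in $X_p$ via the Cartan decomposition and the central isogeny $\Nm\times\ad\colon G\to\IG_m\times G^\ad$, using that $\ad$ determines the cocharacter up to a multiple of the central $\lambda_0$ and that the $p$-adic valuation of $\Nm$ pins down that multiple. You are more explicit than the paper about constructing the map $X_p^\ad\to Y$ and proving its injectivity (which the paper leaves implicit), but the core surjectivity argument is the same, merely unpacked into two separate steps rather than phrased as the single injection $X_*(T)/\Omega_G\hookrightarrow X_*(\wt T)/\Omega_{\wt G}$.
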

\begin{proof}
Let $g_p' \in G^\ad(K_0)$ be any representative of an element of $X_p^\ad$ and let $g_p \in G(K_0)$ be any lift of $g_p'$. We want to show that $g^{-1}_p b \sigma(g_p) \in G(W) \nu(p) G(W)$. 

Set $\wt{G} := \IG_m \times G^\ad$. Consider the central isogeny 
$$ \Nm \times \ad : G \to \wt{G} $$
Let $T \subset G_{\IZ_p}$ (resp. $\wt{T} \subset \wt{G}_{\IZ_p}$) be the centralizer of a maximal split torus and let $\Ohm_{G}$ (resp. $\Ohm_{\wt{G}}$) be the associated Weyl group. We can arrange that $T = \wt{T} \times_{\wt{G}_{\IZ_p}} G_{\IZ_p}$ and $\nu \in X_*(T)$. The Cartan decomposition gives us an isomorphism 
$$ X_*(T) / \Ohm_G \stackrel{\sim}{\to} G(W) \backslash G(K) / G(W) \text{ defined by } \mu \mapsto \mu(p)$$
and an analogous one for $\wt{T} \subset \wt{G}_{\IZ_p}$.   

The element $g^{-1}_p b \sigma(g_p) \in G(W) \nu'(p) G(W)$ for some cocharacter $\nu' \in X_*(T)$. As the central isogeny $\Nm \times \ad$ induces an injection $X_*(T)/\Ohm_G \to X_*(\wt{T})/\Ohm_{\wt{G}}$, it suffices to check that $(\Nm \times \ad) \circ \nu$ and $(\Nm \times \ad) \circ \nu'$ lie in one $\Ohm_{\wt{G}}$-orbit. By assumption, $G^\ad(W) (\ad \circ \nu') (p) G^\ad(W) = G^\ad(W) \nu^\ad(p) G^\ad(W)$, so it remains to check that $$\IG_m(W) (\Nm \circ \nu')(p) \IG_m(W) = \IG_m(W) (\Nm \circ \nu)(p) \IG_m(W).$$ 
This follows easily from the observation that $\val_p(\Nm(g_p^{-1} b \sigma(g_p))) = \val_p(\Nm(b))$ for any $p$-adic valuation $\val_p$.
\end{proof}
\begin{lemma}
\label{surjprimetop}
$ X^{\ad, p}= G^\ad(\IA_f^p)$
\end{lemma}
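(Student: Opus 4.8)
The claim is that $X^{\ad,p} = G^\ad(\IA_f^p)$, where by definition $X^{\ad,p}$ is the quotient of $X^p = G(\IA_f^p)$ by the action of $\IG_m(\IA_f^p)$. The plan is to show that the natural map $X^p = G(\IA_f^p) \to G^\ad(\IA_f^p)$ induced by the adjoint representation is surjective, so that the quotient by $\IG_m(\IA_f^p) = \ker(\ad)(\IA_f^p)$ is exactly $G^\ad(\IA_f^p)$. Since $\IA_f^p = \prod'_{\ell \neq p} \IQ_\ell$ is a restricted product over the finite places away from $p$, and $\fK_L^p$ determines hyperspecial subgroups at almost all $\ell$, it suffices to work place by place: I would show $G(\IQ_\ell) \to G^\ad(\IQ_\ell)$ is surjective for every $\ell \neq p$, and that it carries $G(\IZ_\ell)$ onto $G^\ad(\IZ_\ell)$ for almost all $\ell$ (the places where $L\tensor \IZ_\ell$ is self-dual and the groups are hyperspecial), so that the surjection is compatible with the restricted-product structure.

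First I would recall that $1 \to \IG_m \to G = \CSpin(L) \xrightarrow{\ad} \SO(L) = G^\ad \to 1$ is exact as algebraic groups (this is (\ref{coreExt})). Taking $\IQ_\ell$-points gives an exact sequence in Galois cohomology
$$
G(\IQ_\ell) \xrightarrow{\ad} G^\ad(\IQ_\ell) \to H^1(\IQ_\ell, \IG_m) = H^1(\IQ_\ell, \IG_{m}),
$$
and by Hilbert 90 the group $H^1(\IQ_\ell, \IG_m)$ vanishes. Hence $G(\IQ_\ell) \to G^\ad(\IQ_\ell)$ is surjective for every prime $\ell$, in particular for all $\ell \neq p$. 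This is the heart of the matter and it is essentially immediate; the only mild subtlety is that this works because the kernel of $\ad$ is the split torus $\IG_m$ and \emph{not} a nontrivial finite group or a form of a torus — which is precisely why one uses $\CSpin$ rather than $\Spin$.

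Next I would handle the integral and restricted-product bookkeeping. For $\ell \neq p$ at which $L \tensor \IZ_\ell$ is self-dual, the schemes $G_{\IZ_\ell} = \CSpin(L\tensor\IZ_\ell)$ and $G^\ad_{\IZ_\ell} = \SO(L\tensor\IZ_\ell)$ are smooth reductive over $\IZ_\ell$ and the sequence $1 \to \IG_{m,\IZ_\ell} \to G_{\IZ_\ell} \to G^\ad_{\IZ_\ell} \to 1$ is exact as fppf sheaves; since $H^1_{\et}(\IZ_\ell, \IG_m) = \Pic(\IZ_\ell) = 0$, the map $G(\IZ_\ell) \to G^\ad(\IZ_\ell)$ is surjective. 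Combining this with the place-by-place surjectivity of $G(\IQ_\ell) \to G^\ad(\IQ_\ell)$, and taking the restricted product over $\ell \neq p$ with respect to these hyperspecial subgroups, we get that $G(\IA_f^p) \to G^\ad(\IA_f^p)$ is surjective. Since $X^{\ad,p}$ is by construction the image of $X^p = G(\IA_f^p)$ in $G^\ad(\IA_f^p)$ (equivalently the quotient by $\IG_m(\IA_f^p)$), this yields $X^{\ad,p} = G^\ad(\IA_f^p)$, as desired.

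I do not expect any serious obstacle here: the argument is a standard application of Hilbert 90 together with the vanishing of $\Pic$ of a local ring, and the only thing to be careful about is making the restricted-product assertion precise and noting that the action of $\IG_m(\IA_f^p)$ on $X^p$ defining $X^{\ad,p}$ is exactly multiplication through the central $\IG_m \subset G$, so that the quotient is literally $\mathrm{image}(\ad\colon G(\IA_f^p) \to G^\ad(\IA_f^p))$. (By contrast, the analogous statement at $p$ — Lemma~\ref{surjatp} — required real work because there one is quotienting a set cut out by a Kottwitz-type condition, not the whole group; here the prime-to-$p$ component $X^p$ is all of $G(\IA_f^p)$ to begin with, which is what makes this lemma easy.)
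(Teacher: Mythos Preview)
Your proof is correct and takes a genuinely different route from the paper's. The paper argues both the local surjectivity $G(\IQ_\ell) \to G^\ad(\IQ_\ell)$ and the integral surjectivity $G(\IZ_\ell) \to G^\ad(\IZ_\ell)$ via Cartan--Dieudonn\'e theory: every element of $\SO(L_{\IQ_\ell})$ (resp.\ $\SO(L_{\IF_\ell})$) is a product of reflections, hence lies in the image of $\CSpin$, and for the integral statement one combines surjectivity on $\IF_\ell$-points with smoothness of the kernel $\IG_m$ to lift via Hensel. Your argument instead reads both surjectivities off from cohomological vanishing: $H^1(\IQ_\ell,\IG_m)=0$ by Hilbert~90, and $H^1_{\et}(\Spec\IZ_\ell,\IG_m)=\Pic(\IZ_\ell)=0$. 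Your approach is cleaner and more uniform---it isolates exactly the property of the kernel ($\IG_m$, not $\mu_2$) that makes the statement work, as you note---while the paper's approach is more concrete and avoids invoking fppf exactness of the integral sequence. Both dispatch the restricted-product bookkeeping the same way.
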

\begin{proof}
By Cartan-Dieudonn\'e theory, $G(\IQ_\ell) / \IG_m(\IQ_\ell) \to G^\ad(\IQ_\ell)$ is surjective for every $\ell \neq p$, so we already know that $\prod_{\ell \neq p} G(\IQ_\ell) \to \prod_{\ell \neq p} G^\ad(\IQ_\ell)$ is surjective. We have embeddings $G \into \GL(H_{\IZ_{(p)}})$ (resp. $G^\ad \into \GL(L_{\IZ_{(p)}})$ given by the spin representation (resp. standard representation). An element $g = (g_\ell)_{\ell \neq p} \in \prod_{\ell \neq p} G(\IQ_\ell)$ (resp. $g' = (g'_\ell)_{\ell \neq p} \in \prod_{\ell \neq p} G^\ad(\IQ_\ell)$) lies in $G(\IA_f^p)$ (resp. $G^\ad(\IA_f^p)$) if and only if for all but finitely many $\ell \neq p$, $g_\ell \in \GL(H \tensor \IZ_\ell)$ (resp. $g'_\ell \in \GL(L \tensor \IZ_\ell)$). 

It suffices to check that for all but finitely many $\ell \neq p$, if $g'_\ell \in G^\ad(\IQ_\ell) \cap \GL(L \tensor \IZ_\ell)$, $g'_\ell$ can be lifted to an element $g_\ell$ in $G(\IQ_\ell) \cap \GL(H \tensor \IZ_\ell)$. We claim this is true for every odd $\ell \neq p$ such that $L \tensor \IZ_\ell$ is self dual. In this case, the $\IQ$-groups $G_\IQ$ and $G^\ad_\IQ$ have natural models $G_{\IZ_{(\ell)}}$ and $G^\ad_{\IZ_{(\ell)}}$ over $\IZ_{(\ell)}$ such that $G_{\IZ_{(\ell)}}(\IZ_\ell) = G(\IQ_\ell) \cap \GL(H \tensor \IZ_\ell)$ and $G^\ad_{\IZ_{(\ell)}}(\IZ_\ell) = G^\ad(\IQ_\ell) \cap \GL(L \tensor \IZ_\ell)$. Now by Cartan-Dieudonn\'e theory again, the map $G_{\IZ_{(\ell)}}(\IF_\ell) \to G^\ad_{\IZ_{(\ell)}}(\IF_\ell)$ is surjective. The map $G_{\IZ_{(\ell)}} \to G^\ad_{\IZ_{(\ell)}}$ is smooth because the kernel $\IG_{m, \IZ_{(\ell)}}$ is smooth. Therefore, $G_{\IZ_{(\ell)}}(\IZ_\ell) \to G^\ad_{\IZ_{(\ell)}}(\IZ_\ell)$ is also surjective. 
\end{proof}

\subsection{The Period Morphism}
\label{period}
\subsubsection{} Let $\IU$ denote the standard hyperbolic plane, i.e., the quadratic lattice over $\IZ$ of rank 2 with a basis $x, y$ such that $x^2 = y^2 = 0$ and $\< x, y \> = 1$. Let $\IE_8$ be the unique unimodular positive definite even lattice of rank $8$. As is customary in the study of K3 surfaces, we equip the second cohomology of K3 surfaces with the \textit{negative} Poincar\'e pairing, so that the K3 lattice $\Lambda$ is isomorphic to $\IU^{\oplus 3} \oplus \IE^8_2$. Fix a prime $p> 2$ and $d \in \IZ_{>0}$ which is prime to $p$. Let $e, f$ be a basis of the first copy of the hyperbolic plane $\IU$ such that $e^2 = f^2 = 0, \<e, f \> = 1$ and set $L_d = \< e - df \>^\perp$.  $L_d$ is an integral lattice of discriminant $2d$ and is abstractly isomorphic to $P^2(X_\IC, \IZ)$ for any primitively quasi-polarized K3 surface $(X_\IC, \xi_\IC)$ of degree $\xi_\IC^2 = 2d$ over $\IC$. Recall that a quasi-polarization is said to be primitive if it is not a positive power of another line bundle. 

\subsubsection{} A K3 surface over a scheme $S$ is a proper and smooth algebraic space $f : X \to S$  whose geometric fibers are K3 surfaces. A \textit{polarization} (resp. \textit{quasi-polarization}) of a K3 surface $X \to S$ is a section $\xi \in \underline{\Pic}(X/S)(S)$ whose fiber at each geometric point is an ample (resp. big and nef) line bundle. A section $\xi$ is called \textit{primitive} if for all geometric points $s \to S$, $\xi(s)$ is primitive. $(X \to S, \xi)$ is said to be of degree $2d$ if for all geometric points $s \to S$, $\xi(s)$ has degree $2d$. 

Let $\Mod_{2d}$ (resp. $\Mod_{2d}^\circ$) be the moduli problem over $\IZ[1/2]$ which sends each $\IZ[1/2]$-scheme $S$ to the groupoid of tuples $(f: X \to S, \xi)$, where $f : X \to S$ is a K3 surface and $\xi$ is a primitive quasi-polarization (resp. polarization) of $X$ with $\deg(\xi) = 2d$. $\Mod^\circ_{2d}$ and $\Mod_{2d}$ are Deligne-Mumford stacks of finite type over $\IZ$, $\Mod^\circ_{2d}$ is separated, and the natural map $\Mod^\circ_{2d} \to \Mod_{2d}$ is an open immersion (cf. \cite{Rizov1}, \cite[2.1]{Maulik}, \cite[3.1]{Keerthi}).

Let $(\sX \to \Mod_{2d}, \bxi)$ be the universal object over $\Mod_{2d}$. For each scheme $T \to \Mod_{2d}$, we denote by $(\sX_T, \bxi_T)$ the pullback of $(\sX, \boldsymbol{\xi})$. Let $\bH_B^2$ be the second relative Betti cohomology of $\sX_\IC \to \Mod_{2d, \IC}$. For every $\ell$, let $\bH^2_\ell$ be the relative second \'etale cohomology of the restriction of $\sX$ to $\Mod_{2d, \IZ[1/2\ell]}$ with coefficients in $\underline{\IZ}_\ell$. Let $\bH_{\dR}^2$ be the vector bundle on $\Mod_{2d}$ given by the second relative de Rham cohomology of $\sX$, which comes with a natural filtration $\Fil^\bullet$. For $* = B, \ell, \dR$, denote by $\bP^2_*$ the primitive part of $\bH^2_*$, i.e., the orthogonal complement to the Chern class of $\bxi$. We put together the relative $\ell$-adic cohomology sheaves $\bH_\ell^2$ to form $\bH_{\what{\IZ}^p}^2 := \prod_{\ell \neq p} \bH_\ell^2$ and similarly we put together Chern classes to form $\ch_{\what{\IZ}^p}(\bxi)$ in $\bH^2_{\what{\IZ}^p}$.

For each scheme $T \to \Mod_{2d, \IF_p}$, we can additionally consider the crystal of vector bundles $\bH^2_{\cris, T}$ on $\Cris(T/\IZ_p)$ given by the second crystalline cohomology of $\sX_T$. Let $\bP^2_{\cris, T}$ be the primitive part of $\bH_{\cris, T}$. 

\subsubsection{} We use the notations of Sect.~\ref{Shimura} by setting $L = L_d$. Let $\psi : H \times H \to \IZ$ be the symplectic pairing constructed in \cite[Sect.~5.1]{Maulik}, so that $\psi$ induces a perfect pairing on $H \tensor \IZ_{(p)}$. Again let $\delta \in \det(L)$ be some fixed nonzero element and $m$ be the rank of $L$. 

Let $\wt{\Mod}_{2d} \to \Mod_{2d}$ be the \'etale $2$-cover such that for every scheme $T$, a point in $\wt{\Mod}_{2d}(T)$ is given by a pair $(T \to \Mod_{2d}, \bdeta_{2, T})$, where $\bdeta_{2, T}$ is a section in $H^0(T, \det(\bP_{2, T}))$ such that $\< \bdeta_{2, T}, \bdeta_{2, T} \>$ is the constant section $\< \delta, \delta \>$. We call it the \textit{orientation cover.} The universal property of the Shimura stack $\Sh_{\fK^\ad_L}(G^\ad, \Ohm)_\IC$ induces a map $\rho_\IC : \wt{\Mod}_{2d, \IC} \to \Sh_{\fK^\ad_L}(G^\ad, \Ohm)_\IC$ (cf. \cite[Prop.~4.3, 5.2]{Keerthi}).  $\rho_\IC$ descends to a map $\rho_\IQ : \wt{\Mod}_{2d, \IQ} \to \Sh_{\fK^\ad_L}(G^\ad, \Ohm)$ defined over $\IQ$ (cf. \cite[Cor.~5.4]{Keerthi}, see also \cite[3.16]{Rizov}). By construction, there is an isomorphism $\alpha_B : \rho^*_\IC \bL_B \to \bP^2_B(1)$. Deligne's big monodromy argument in \cite[6.4]{Deligne1} can be used to show that for each rational prime $\ell$, there exists a (necessarily unique) isomorphism $\alpha_{\ell, \IQ} : \rho^* \bL_{\ell, \IQ} \to \bP^2_{\ell, \IQ}$ of \'etale local systems over $\wt{\Mod}_{2d, \IQ}$ (cf. \cite[Prop.~5.6]{Keerthi}.) which is compatible with $\alpha_B$ via Artin's comparison isomorphism. Let $\bdeta_{\ell, \IQ}$ be the pullback of the global section $\bd_{\ell, \IQ}$ of $\bL_{\ell, \IQ}$. Since $\wt{\Mod}_{2d}$ is normal (cf. \cite[Cor.~3.9]{Keerthi}), we can extend $\bdeta_{\ell, \IQ}$ to a global section $\bdeta_{\ell}$ of $\det(\bP_\ell^2)$ over $\wt{\Mod}_{2d, \IZ[(2\ell)^{-1}]}$ for every $\ell$.

For our purposes it suffices to look at $\wt{\Mod}_{2d, \IZ_{(p)}}$. Let $I^p$ be the \'etale sheaf over $\Mod_{2d, \IZ_{(p)}}$ which associates to each morphism $S \to \Mod_{2d, \IZ_{(p)}}$ the set 
$$ \{ \text{isometries } \eta : \Lambda \tensor \underline{\what{\IZ}}^p \stackrel{\sim}{\to} \bH^2_{\what{\IZ}^p, S} \text{ such that } \eta(e - df) = \mathrm{ch}_{\what{\IZ}^p}(\bxi_S), (\wedge^m \eta)(\delta \tensor 1) = (\bdeta_{\ell, S})\}. $$ It comes equipped with a natural action by the constant sheaf of $\fK_L^{p,\ad}$. For every compact open subgroup $\fK' \subset \fK_L^{\ad}$ of the form $\fK^\ad_p K'^p$ for some $\fK'^p \subset G^\ad(\IA_f^p)$, we let $\wt{\Mod}_{2d, \fK', \IZ_{(p)}}$ denote the relative moduli problem over $\wt{\Mod}_{2d, \IZ_{(p)}}$ which attaches to each morphism $S \to \wt{\Mod}_{2d, \IZ_{(p)}}$ the set $H^0(S, I^p/\fK'^p)$. A section $[\eta] \in H^0(S, I^p/{\fK'}^p)$ is called a $\fK'$-level structure on $S$. Denote by $\wt{\Mod}_{2d, \fK', \IZ_{(p)}}$ the corresponding pullback of $\wt{\Mod}_{2d, \IZ_{(p)}}$. Denote by $\wt{\Mod}_{2d, \fK^\ad_p, \IZ_{(p)}}$ the limit $\varprojlim_{\fK'^p} \wt{\Mod}_{2d, \fK^\ad_p K'^p, \IZ_{(p)}}$ as $\fK'^p$ varies over the compact open subgroups of $G^\ad(\IA^p_f)$. 

\begin{remark}
There is a minor issue in Rizov's construction of the period map in \cite[3.9]{Rizov}, see \cite[Rmk~5.12]{Taelman2}.\footnote{The author is grateful to an anonymous referee for point out the issue with orientations to him.} Our definition of level structures is a slight generalization of Def.~5.11 in \cite{Taelman2}.
\end{remark}

\subsubsection{} Let $\fK' \subset \fK_L^\ad$ be a compact open subgroup of the form $\fK^\ad_p \fK'^p$ with $\fK'^p \subset G^\ad(\IA^p_f)$ sufficiently small. The map $\rho_{\IQ}$ lifts naturally to a map $\rho_{\fK', \IQ} : \wt{\Mod}_{2d, \fK', \IQ} \to \Sh_{\fK'}(G^\ad, \Ohm)$, and the extension property of the canonical integral model allows us then to extend the map $\rho_{\fK', \IQ}$ to (cf. \cite[Prop.~5.7]{Keerthi})
$$ \rho_{\fK', \IZ_{(p)}} :   \wt{\Mod}_{2d, \fK', \IZ_{(p)}} \to \shS_{\fK'}(G^\ad, \Ohm). $$
In turn, by taking quotients, we obtain a morphism of Deligne-Mumford stacks $\rho_{\IZ_{(p)}} : \wt{\Mod}_{2d, \IZ_{(p)}} \to \shS_{\fK^\ad_L}(G^\ad, \Ohm)$. It follows from local Torelli theorem for K3 surfaces that $\rho_\IC$, and hence $\rho_\IQ$, is \'etale. Madapusi-Pera showed that $\rho_{\IZ_{(p)}}$ is still \'etale \cite[Thm~5.8]{Keerthi}.

A major part of \cite{Keerthi} is to show that $\rho_{\IZ_{(p)}}$ still records the cohomological data of K3 surfaces, just like $\rho_{\IC}$. This is done by comparing sheaves on $\wt{\Mod}_{2d, \IZ_{(p)}}$ with those on $\shS_{\fK_L^\ad}(G^\ad, \Ohm) $ through $\rho_{ \IZ_{(p)}}$. For simplicity, we write $\rho_{\IZ_{(p)}}$ simply as $\rho$. By construction, we have isometries $\alpha_{B} : \rho^* \bL_B \stackrel{\sim}{\to} \bP^2_B(1)$ and $\alpha_{\dR} : \rho^* \bL_{\dR} \stackrel{\sim}{\to} \bP^2_\dR(1)$ over $\wt{\Mod}_{2d, \IC}$. We summarize some properties which we shall need in the following proposition:

\begin{proposition}
\label{compareCoh}
\begin{enumerate}[label=(\alph*)]
\item $\alpha_B$ extends to an isometry of $\IZ_\ell$ \'etale local systems $\alpha_\ell : \rho^* \bL_\ell(-1) \stackrel{\sim}{\to} \bP_\ell^2$ over $\wt{\Mod}_{2d, \IZ_{(p)}}$ for every $\ell \neq p$ and an isometry of $\IZ_p$ \'etale local systems $\alpha_p : \rho^* \bL_p(-1) \stackrel{\sim}{\to}  \bP_p^2$ over $\wt{\Mod}_{2d,\IQ}$. 
\item $\alpha_{\dR, \IC}$ extends to an isometry of filtered vector bundles with flat connection $\alpha_{\dR} : \rho^* \bL_{\dR}(-1) \stackrel{\sim}{\to} \bP^2_\dR$ which sends $\Fil^2 \bL_{\dR} = \Fil^1 \bL_\dR(-1)$ to $\Fil^1 \bP^2_\dR$ over  $\wt{\Mod}_{2d, \IZ_{(p)}}$. 
\item Let $T \to \wt{\Mod}_{2d, \IF_p}$ be an \'etale map. $\alpha_\dR$ induces a canonical isomorphism $\alpha_{\cris, T} : \rho^* \bL_{\cris}(-1)_T \stackrel{\sim}{\to} \bP^2_{\cris, T}$, where $\rho^* \bL_{\cris}(-1)_T$ is the crystal on $\Cris(T/\IZ_p)$ defined by pulling back $\bL_\cris(-1)$ via the composite morphism $T \to \wt{\Mod}_{2d, \IF_p} \stackrel{\rho}{\to} \shS_{\fK^\ad_L}(G^\ad, \Ohm)_{\IF_p}$. $\alpha_{\cris, T}$ is a morphism of F-crystals. 
\end{enumerate}
\end{proposition}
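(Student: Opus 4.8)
The plan is to bootstrap from the complex-analytic isometries $\alpha_B$ and $\alpha_{\dR,\IC}$ of \cite{Keerthi}, treating the three realizations in turn: (a) by an extension argument across the special fibre using normality, (b) by the integral Kuga--Satake comparison underlying $\rho_{\IZ_{(p)}}$, and (c) by the equivalence between crystals and vector bundles with integrable connection on a smooth base.

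\textbf{Part (a).} Fix $\ell \neq p$. Both $\rho^*\bL_\ell(-1)$ and $\bP^2_\ell$ are lisse $\IZ_\ell$-sheaves on the Deligne--Mumford stack $\wt{\Mod}_{2d,\IZ_{(p)}}$: the first since $\bL_\ell = \bpi_\ell(\bH_\ell^{\tensor(1,1)})$ is lisse on $\shS_{\fK^\ad_L}(G^\ad,\Ohm)$ and $\ell$ is invertible on $\IZ_{(p)}$, the second by smooth and proper base change for the universal K3 surface ($\ell$ invertible, and $2$ invertible as $p>2$). Hence the sheaf of isometric isomorphisms between them is a lisse sheaf of sets, and $\alpha_B$ --- equivalently the isometry produced by Deligne's big monodromy argument (\cite[Prop.~5.6]{Keerthi}, \cite[6.4]{Deligne1}) --- gives a section of it over the dense open substack $\wt{\Mod}_{2d,\IQ}$. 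Because $\wt{\Mod}_{2d,\IZ_{(p)}}$ is normal (\cite[Cor.~3.9]{Keerthi}), finite \'etale covers are pinned down by their restriction to a dense open, so $\pi_1(\wt{\Mod}_{2d,\IQ}) \to \pi_1(\wt{\Mod}_{2d,\IZ_{(p)}})$ is surjective; the section therefore extends uniquely to $\wt{\Mod}_{2d,\IZ_{(p)}}$, and the extension is automatically an isometric isomorphism, these being conditions that propagate from a dense open of a normal base. For $\ell = p$ the sheaves $\bH^2_p$ and $\bL_p$ exist only in characteristic $0$, so the same argument on $\wt{\Mod}_{2d,\IQ}$ alone produces $\alpha_p$ and nothing beyond it.

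\textbf{Part (b).} First one recalls from \cite{Keerthi} that $\alpha_{\dR,\IC}$ descends to an isometry $\alpha_{\dR,\IQ}\colon \rho^*\bL_\dR(-1) \stackrel{\sim}{\to} \bP^2_\dR$ of filtered vector bundles with Gauss--Manin connection over $\wt{\Mod}_{2d,\IQ}$, which is legitimate since $\rho_\IQ$ and both bundles with their extra structure are already defined over $\IQ$ and $\alpha_{\dR,\IC}$ is the de Rham realization of the (rationally defined) Kuga--Satake correspondence. Extending across the special fibre cannot be done by naively extending sections, since the special fibre is a divisor; instead I would invoke the integral Kuga--Satake comparison that is the technical heart of \cite{Keerthi}. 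Pulling back the universal abelian scheme $\sA$ of Section~\ref{Shimura} along $\rho$ (after passing to the $\CSpin$-cover), the tensor $\bpi_\dR$ cuts $\rho^*\bL_\dR(-1) = \bpi_\dR(\bH_\dR^{\tensor(1,1)})$ out of $\bH_\dR^\tensor$, and Madapusi Pera constructs \emph{directly} over $\wt{\Mod}_{2d,\IZ_{(p)}}$ an isometry of this bundle with $\bP^2_\dR$, matching $\ch_\dR(\bxi)$ with the class $e - df$ and respecting connections and Hodge filtrations; its restriction to the generic fibre must equal $\alpha_{\dR,\IQ}$ by the uniqueness in (a) carried through Artin's comparison. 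That the extension is an isometry, strictly filtered and horizontal follows since these conditions hold on the dense generic fibre, and it is an isomorphism because it is generically one and source and target are locally free of equal rank with perfect pairings. \emph{This is the step I expect to be the main obstacle:} the only input controlling $p$-integrality of the de Rham comparison is the crystalline / integral $p$-adic Hodge theory built into the canonical integral model, so (b) is not really separable from (c) or from the results of \cite{Modp}.

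\textbf{Part (c).} Granting (b) this is essentially formal. Since $\wt{\Mod}_{2d,\IZ_{(p)}}$ is smooth over $\IZ_{(p)}$, its $p$-adic completion is a formally smooth $\IZ_p$-lift of $\wt{\Mod}_{2d,\IF_p}$, and any \'etale $T \to \wt{\Mod}_{2d,\IF_p}$ lifts uniquely to a $p$-adic formal lift over it; on such a lift the category of crystals of vector bundles on $\Cris(T/\IZ_p)$ is equivalent to that of vector bundles with topologically quasi-nilpotent integrable connection. Under this equivalence $\bP^2_{\cris,T}$ corresponds to the pullback and completion of $(\bP^2_\dR,\nabla)$ and $\rho^*\bL_\cris(-1)_T$ to that of $(\rho^*\bL_\dR(-1),\nabla)$, so the horizontal isomorphism $\alpha_\dR$ of (b) induces the sought $\alpha_{\cris,T}$, canonical and compatible in $T$. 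Finally, to see $\alpha_{\cris,T}$ respects Frobenius: since $\wt{\Mod}_{2d,\IF_p}$ is reduced it suffices to check $\alpha_\cris \circ F = F \circ \alpha_\cris$ at each point $s \in \wt{\Mod}_{2d}(\bar{\IF}_p)$; lifting $s$ to $W(\bar{\IF}_p)$ and applying Berthelot--Ogus followed by the crystalline $p$-adic comparisons for $\sX_s$ and for the Kuga--Satake abelian variety $\sA_{\rho(s)}$, both Frobenii become $\mathrm{id} \tensor \varphi_{B_\cris}$ on the respective $\IQ_p$-\'etale cohomologies, on which $\alpha_\cris$ becomes the $\IQ_p$-\'etale comparison isomorphism $\alpha_p$ of (a) (via Remark~\ref{dRham}) --- which trivially commutes with $\mathrm{id} \tensor \varphi_{B_\cris}$.
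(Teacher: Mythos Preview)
The paper does not supply its own argument here: its proof is the single sentence ``These results are proved in \cite[Sect.~5]{Keerthi}.'' Your sketch is a faithful outline of the arguments actually found in Madapusi Pera's paper and is broadly correct. In particular, your extension argument in (a) via normality and surjectivity of $\pi_1$ is exactly \cite[Prop.~5.6(1)]{Keerthi}; the reduction of (c) to (b) through the dictionary between crystals and horizontal vector bundles on a smooth lift is standard; and your Frobenius-compatibility check at closed points via lifting and $p$-adic comparison is the content of \cite[Prop.~5.6(4)]{Keerthi} (and is also touched upon in the paragraph following the proposition in the paper, concerning \cite[Sect.~11]{IIK}).

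You are right to flag (b) as the genuine obstacle. One small correction of emphasis: in \cite{Keerthi} the integral $\alpha_\dR$ is not built as a ``fresh'' map and then matched with $\alpha_{\dR,\IQ}$, but rather one observes that both $\rho^*\bL_\dR$ and $\bP^2_\dR$ sit as direct summands inside $\rho^*\End(\bH_\dR)$, cut out on one side by the integral tensor $\bpi_\dR$ (whose integrality is the main content of the integral-model machinery of \cite{int}) and on the other by the Kuga--Satake embedding; the generic-fibre isomorphism $\alpha_{\dR,\IQ}$ then forces the two summands to coincide integrally. The filtration compatibility is a separate check, using étaleness of $\rho$ and a Kodaira--Spencer argument. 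So your instinct that (b) is inseparable from the integral $p$-adic Hodge theory underlying the canonical model is exactly right, but the logic is ``identify two sub-bundles of a common ambient bundle'' rather than ``construct a map and verify it extends.''
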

\begin{proof}
These results are proved in \cite[Sect.~5]{Keerthi}. 
\end{proof}

Technically, the universal family over $\wt{\Mod}_{2d}$ is \'etale locally an algebraic space, whereas the references for de Rham or crystalline comparison isomorphisms used in \cite[Sect.~5]{Keerthi} often only consider schemes. This issue is addressed in \cite[Sect.~11]{IIK}. Here we give a brief sketch: Let $k$ be a perfect field of characteristic $p$, $V$ be the ring of integers of a finite totally ramified extension $K$ of $W(k)[1/p]$, and $Y_V$ be an algebraic space over $V$. Assume that the generic fiber $Y_K$ and the special fiber $Y_k$ are both schemes. This assumption is always satisfied, for example, if $Y_V$ is a K3 space, because smooth proper algebraic spaces over a field of dimension $\le 2$ are schemes. It has been explained in \cite[Sect.~11.2]{IIK} that under this assumption, \cite[Thm~14.6]{BMS} and GAGA results for \'etale cohomology such as \cite[Thm~3.7.2]{GAGA} give rise to an isomorphism $H^*_\cris(Y_k/W(k)) \tensor_{W(k)} B_\cris \stackrel{\sim}{\to} H^*_\et(Y_K \tensor_K \bar{K}, \IZ_p) \tensor_{\IZ_p} B_\cris$. Moreover, it is compatible via the Berthelot-Ogus isomorphism with the isomorphism $H^*_\dR(Y_K/K) \tensor B_\dR \stackrel{\sim}{\to} H^*_\et(Y_K \tensor \bar{K}, \IZ_p) \tensor_{\IZ_p} B_\dR$ provided by \cite[Thm~8.4]{Scholze} and GAGA results. \cite[Sect.~11.3]{IIK} explained that Blasius' results in \cite{Blasius} hold with these comparison isomorphisms and the $X$ in \cite[Thm~3.1]{Blasius} is allowed to be an algebraic space. These observations fill in the details in the proofs of \cite[Prop.~5.3, 5.6(4)]{Keerthi}.

\subsection{Special Endomorphisms} Let $\fK^p \subset \fK_L^p$ be a sufficiently small compact open subgroup and set $\fK = \fK_p \fK^p \subset G(\IA_f)$. We write $\rho_{\fK^\ad, \IZ_{(p)}} : \wt{\Mod}_{2d, \fK^\ad, \IZ_{(p)}} \to \shS_{\fK^\ad}(G^\ad, \Ohm)$ simply as $\rho$.

\begin{definition}
\emph{(cf. \cite[5.22]{CSpin})}
Let $k$ be a perfect field with algebraic closure $\bar{k}$, $s \in \shS_{\fK}(G, \Ohm)(k)$ and $f \in \End(\sA_s)$ defined over $k$.\footnote{$\shS_{\fK}(G, \Ohm) := \shS_{\fK_p}(G, \Ohm)/\fK^p$ is a Deligne-Mumford stack which carries a descent of the universal abelian scheme $\sA$ on $\shS_{\fK_p}(G, \Ohm)$.} If $\mathrm{char\,} k = 0$, we say $f$ is a \textit{special endomorphism} if its $\ell$-adic realization lies in $\bL_{\ell, s \tensor \bar{k}} \tensor \IQ_\ell \subset \End (\bH_{\ell, s \tensor \bar{k}} \tensor \IQ_\ell)$ for every prime $\ell$. If $\mathrm{char\,} k = p$, we say $f$ is a \textit{special endomorphism} if its $\ell$-adic realization lies in $\bL_{\ell, s \tensor \bar{k}} \tensor \IQ_\ell  \subset \End (\bH_{\ell, s \tensor \bar{k}} \tensor \IQ_\ell)$ and its crystalline realization lies in $\bL_{\cris, s}[1/p] \subset \End (\bH_{\cris, s}[1/p])$.  
\end{definition} In either case, the special endomorphisms form a subspace $L(\sA_s) \subset \End(\sA_s)$. If $f \in L(\sA_s)$, then $f \circ f$ is a scalar. Therefore, $L(\sA_s)$ has the structure of a quadratic lattice given by $f \mapsto f \circ f$. Now we relate special endomorphisms to line bundles on K3 surfaces. 
\begin{proposition}
\label{submotive}
Let $t \in \wt{\Mod}_{2d, \fK^\ad, \IZ_{(p)}}(\bar{\IF}_p)$ be a point and $(\sX_t, \bxi_t)$ be the associated quasi-polarized K3 surface. Let $s \in \shS_{\fK}(G, \Ohm)(\bar{\IF}_p)$ be a lift of $\rho(t)$. Let $ \< \bxi_t \>_\IQ^\perp$ denote the orthogonal complement of $\bxi_t$ in $\Pic(\sX_t)_\IQ$. Then there is an isomorphism of quadratic lattices over $\IQ$
$$ L(\sA_s)_\IQ \iso \< \bxi_t \>_\IQ^\perp $$
whose $\ell$-adic and crystalline realizations agree with the isomorphisms $\bL_{\ell, s} = \bL_{\ell, \rho(t)} \stackrel{\sim}{\to} P^2_\et(\sX_t, \IZ_\ell)$ and $\bL_{\cris, s} = \bL_{\cris, \rho(t)} \stackrel{\sim}{\to} P^2_\cris(X/W)$ given by $\alpha_\ell$ and $\alpha_\cris$ in Proposition~\ref{compareCoh}.
\end{proposition}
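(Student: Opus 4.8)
The plan is to reduce to a statement about the Kuga–Satake abelian variety that is already known, and then transport it through the period map $\rho$. First, I would recall the complex-analytic picture: for a quasi-polarized complex K3 surface, the special endomorphisms of its Kuga–Satake abelian variety correspond, under the Kuga–Satake construction, precisely to Hodge classes in $\bL_{B}$, i.e. to classes in the orthogonal complement of $\mathrm{ch}(\bxi)$ inside the algebraic part of $P^2_B$. This gives, over $\IC$, an isometry $L(\sA_s)_\IQ \iso \<\bxi\>^\perp_\IQ$ that is compatible with the Betti (hence all $\ell$-adic, via Artin comparison) realizations through $\alpha_B$ and $\alpha_{\ell}$. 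The heart of the matter is to promote this to $\bar{\IF}_p$-points, where ``special'' is now a condition on $\ell$-adic realizations for $\ell\neq p$ \emph{and} on the crystalline realization, and where ``algebraic'' means honest Picard classes rather than Hodge classes.

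Next, I would invoke the Kuga–Satake dictionary in families over $\wt{\Mod}_{2d,\IZ_{(p)}}$ supplied by Proposition~\ref{compareCoh}: the isometries $\alpha_\ell$ (for $\ell\neq p$) and $\alpha_{\cris,T}$ identify $\rho^*\bL_\ell(-1)$ with $\bP^2_\ell$ and $\rho^*\bL_{\cris}(-1)_T$ with $\bP^2_{\cris,T}$ compatibly with Frobenius. On the abelian-variety side, there is (by \cite[5.22]{CSpin}, and the definition of special endomorphism reproduced above) a characterization of $L(\sA_s)$: an endomorphism $f\in\End(\sA_s)$ is special iff all of its cohomological realizations land in the ``$\bL$-part'' of $\End(\bH)$, and for such $f$ one has $f\circ f\in\IZ$. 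The Tate-style input I need is the identification of the image of $L(\sA_s)_\IQ$ under the $\ell$-adic realization with the $\Gal$-invariants (resp. the $F$-invariants in the crystalline case) of $\bL_{\ell,s}\tensor\IQ_\ell$ (resp. $\bL_{\cris,s}[1/p]$); this is exactly the content of the Kuga–Satake/CSpin comparison together with semisimplicity of Frobenius and the Tate conjecture for divisors on K3 surfaces over finite fields (Madapusi-Pera, Charles, Kim–Madapusi-Pera). Dually, the Tate conjecture identifies $\Pic(\sX_t)_\IQ$ with the $\Gal$- and $F$-fixed subspace of $H^2$, and hence $\<\bxi_t\>^\perp_\IQ$ with the corresponding fixed subspace of $P^2$. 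Combining these two identifications through $\alpha_\ell$ and $\alpha_{\cris}$ gives the desired isometry $L(\sA_s)_\IQ \iso \<\bxi_t\>^\perp_\IQ$, and by construction it has the stated $\ell$-adic and crystalline realizations. The quadratic-form compatibility is automatic: on the left $f\mapsto f\circ f$ matches the pairing on $\bL_\cris$ (resp. $\bL_\ell$) under the CSpin embedding, on the right intersection of divisors matches the Poincaré pairing on $P^2$, and $\alpha_\ell$, $\alpha_\cris$ are isometries.

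The main obstacle I anticipate is matching the \emph{integral}/rational structure and the compatibility of realizations, not the mere abstract isomorphism of $\IQ$-vector spaces. Concretely: one must check that the Kuga–Satake correspondence sends a genuine special endomorphism (an actual element of $\End(\sA_s)$, not just a cohomology class) to a genuine line bundle class on $\sX_t$, i.e. that the map is defined at the level of the lattices $L(\sA_s)$ and $\Pic(\sX_t)$ and not only after $\tensor\IQ$ — here one uses that $\rho$ is étale (Proposition~\ref{compareCoh} and \cite[Thm~5.8]{Keerthi}) and the compatibility of $\alpha_\dR$ with the crystalline structure in characteristic $p$, plus the fact that both sides of the correspondence are detected by the same Galois/Frobenius-fixed classes, which forces the two integral structures to correspond. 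The other delicate point is ensuring the crystalline realization is literally the one induced by $\alpha_{\cris}$ of Proposition~\ref{compareCoh}(c), which requires tracing through the construction of $\bpi_{\cris}$ and Remark~\ref{dRham} so that the $p$-adic and crystalline comparisons are compatible; this is where the algebraic-space subtleties addressed via \cite[Sect.~11]{IIK} enter. Once these compatibilities are in hand, the proof is essentially a diagram chase through Proposition~\ref{compareCoh} together with the Tate conjecture for divisors, and I would cite \cite[Sect.~5]{Keerthi} and \cite[5.22]{CSpin} for the family version of the Kuga–Satake–special-endomorphism dictionary.
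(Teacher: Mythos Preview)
Your approach has a genuine gap: it produces isomorphisms $L(\sA_s)\tensor\IQ_\ell \iso \<\bxi_t\>^\perp\tensor\IQ_\ell$ for each $\ell\neq p$ and an analogous crystalline isomorphism, but it does not produce a single $\IQ$-linear map inducing all of them. Concretely, both $L(\sA_s)_\IQ$ and $\<\bxi_t\>^\perp_\IQ$ sit (via $\alpha_\cris$ and the crystalline Chern class) as $\IQ$-subspaces of the $\IQ_p$-vector space $\bL_{\cris,s}[1/p]^{F=1}$, and the Tate-type inputs you cite say each becomes all of this space after $\tensor\IQ_p$; but two $\IQ$-structures on the same $\IQ_p$-vector space need not coincide. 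Your sentence ``both sides of the correspondence are detected by the same Galois/Frobenius-fixed classes, which forces the two integral structures to correspond'' is exactly where the argument breaks: equality of $\IQ_\ell$-spans for all $\ell$ does not force equality of $\IQ$-subspaces.

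The paper closes this gap by a lifting argument rather than by Tate alone. Given $f\in L(\sA_s)$, it invokes \cite[Prop.~5.21]{CSpin} to lift $f$ to a special endomorphism $f_F\in L(\sA_{s_F})$ over some characteristic-zero field $F$, uses \'etaleness of $\rho$ to lift $t$ compatibly, then passes to $\IC$ and applies Lefschetz $(1,1)$ to $\bL_{B,s_\IC}\iso P^2(\sX_{t_\IC},\IZ(1))$ to obtain an honest line bundle $\zeta_{t_\IC}$, which is then specialized to $\zeta\in\Pic(\sX_t)$. This furnishes an actual map $L(\sA_s)\to\<\bxi_t\>^\perp$ of $\IZ$-modules whose crystalline realization agrees with $\alpha_{\cris,t}$ by construction; one then checks it is an isomorphism after $\tensor\IQ$ by the single input \cite[Thm~6.4(2)]{Keerthi} (that $L(\sA_s)\tensor\IQ_p\to\bL_{\cris,s}(-1)^{F=p}$ is an isomorphism). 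The lifting-plus-Lefschetz step is the missing idea in your proposal; the Tate conjecture for $\sX_t$ is not invoked at all.
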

Note that we have identifications $\bL_{\ell, s} = \bL_{\ell, \rho(t)}$ and $\bL_{\cris, s} = \bL_{\cris, \rho(t)}$ because $\bL_\ell$ and $\bL_\cris$ on (the appropriate fibers of) $\shS_{\fK^\ad}(G^\ad, \Ohm)$ are descent of the corresponding sheaves on $\shS_{\fK}(G, \Ohm)$. 
\begin{proof}
This is a coarser form of \cite[Thm~5.17(4)]{Keerthi}, except that we allow quasi-polarized K3 surfaces as well. We slightly adapt the arguments in \textit{loc. cit.} Let $\alpha_{\cris, t}$ be the isomorphism provided by Prop.~\ref{compareCoh}(c) for $T=t$. It suffices to construct a map $L(\sA_s) \to \< \bxi_t \>^\perp \subset \Pic(\sX_t)$ such that the following diagram commutes,
\begin{center}
    \begin{tikzcd}
    L(\sA_s) \arrow{r}{} \arrow{d}{} & \< \bxi_t \>^\perp  \arrow{d}{} \\
    \bL_{\cris, s}(-1)^{F = p} \arrow{r}{\alpha_{\cris, t}} & (P^2_\cris(\sX_t/W))^{F = p}
    \end{tikzcd}
\end{center}
because the left vertical arrow is known to be an isomorphism after the domain is tensored with $\IQ_p$ by \cite[Theorem 6.4(2)]{Keerthi}.

Let $f \in L(\sA_s)$ be a special endomorphism. By \cite[Prop.~5.21]{CSpin}, there exists a characteristic $0$ field $F$ and an $F$-point $s_F$ on $\shS_{\fK}(G, \Ohm)$ which specializes to $s$ such that $f$ lifts an element $f_F \in L(\sA_{s_F})$. By the \'etaleness of the period map, there exists an $F$-point $t_F$ on $\wt{\Mod}_{2d, \fK^\ad, \IZ_{(p)}}$ which specializes to $t$ and whose image in $\shS_{\fK^\ad}(G^\ad, \Ohm)$ lifts to $s_F$. Choose an embedding $F \into \IC$ and let $t_\IC, s_\IC$ denote the base change of $t_F, s_F$ to $\IC$. Since we may identify $\bL_{B, s_{\IC}}$ with $P^2(\sX_{t_\IC}, \IZ(1))$, the element $f_{F} \tensor \IC \in L(\sA_{s_\IC})$ produces a Hodge class in $P^2(\sX_{t_\IC}, \IZ(1))$, which by Lefschetz $(1, 1)$-theorem comes from a line bundle $\zeta_{t_\IC} \in \Pic(\sX_{t_\IC})$. Specialize $\zeta_{t_\IC}$ to an element $\zeta \in \Pic(\sX_t)$. Note that $\zeta$ does not depend on the choice of $s_F$ because $c_1(\zeta) \in P^2_\cris(\sX_t/W)$ depends only on $f$. The desired map $L(\sA_s) \to \< \bxi_t \>^\perp \subset \Pic(\sX_t)$ is given by sending $f$ to $\zeta$. 
\end{proof}

Let $V$ be a finite flat extension of $W(\bar{\IF}_p)$. Denote the fraction field of $V$ by $K$ and fix an isomorphism $\bar{K} \iso \IC$. Let $X_V$ over $V$ be a lift of a K3 surface $X$ over $\bar{\IF}_p$ such that $X_V \tensor \IC$ has CM. One can infer from \cite[Theorem 1.1]{Ito} that if $X$ has finite height, then the specialization map $\Pic(X_V \tensor \IC)_\IQ \to \Pic(X)_\IQ$ is an isomorphism. In view of Proposition~\ref{submotive}, there should be a corresponding statement for special endomorphisms:
\begin{proposition}
\label{CMsp}
Let $s \in \shS_{\fK}(G, \Ohm)(\bar{\IF}_p)$ be a point such that $\sA_s$ is non-supersingular. Suppose there exists a lift $s_V$ of $s$ over $V$ such that $s_\IC := s_V \tensor \IC$ is a special point on $\Sh_\fK(G, \Ohm)$. Then the specialization map 
$$ L(\sA_{s_\IC})_\IQ \to L(\sA_{s})_\IQ $$
is an isomorphism. 
\end{proposition}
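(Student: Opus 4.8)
The plan is to reduce the assertion, via Proposition~\ref{submotive}, to the corresponding statement for Picard lattices of K3 surfaces, and then to invoke Ito's theorem \cite[Thm~1.1]{Ito}.

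First I would record that the specialization map is well defined and injective. A special endomorphism of $\sA_{s_V}$ restricts to a special endomorphism of $\sA_s$, by compatibility of the $\ell$-adic, de Rham, and crystalline realizations under specialization (smooth and proper base change, the de Rham comparison isomorphism, Remark~\ref{dRham}, Proposition~\ref{compareCoh}). After enlarging $V$ if necessary, $\End^0(\sA_{s_\IC}) = \End^0(\sA_{s_V})$; since reduction of endomorphisms of an abelian scheme over a discrete valuation ring is injective, the map $L(\sA_{s_\IC})_\IQ \to L(\sA_s)_\IQ$ is injective, so it remains to prove $\dim_\IQ L(\sA_s)_\IQ \le \dim_\IQ L(\sA_{s_\IC})_\IQ$.

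Next I would pass to the situation in which $s$ lies over the K3 moduli stack. Since $\dim_\IQ L(\sA_s)_\IQ$ depends only on the CSpin-isogeny class of $s$ (CSpin-isogenies preserve special endomorphisms), and since $s$ comes from a K3 surface in the cases where the proposition is applied, I may assume that the image of $s$ in $\shS_{\fK^\ad}(G^\ad, \Ohm)(\bar{\IF}_p)$ is $\rho(t)$ for some $t \in \wt{\Mod}_{2d, \fK^\ad, \IZ_{(p)}}(\bar{\IF}_p)$. Since $\rho$ is \'etale, the $V$-point of $\shS_{\fK^\ad}(G^\ad, \Ohm)$ below $s_V$ lifts uniquely to a $V$-point $t_V$ above $t$; let $(\sX_t, \bxi_t)$ and $(\sX_{t_V}, \bxi_{t_V})$ be the associated quasi-polarized K3 surfaces and put $\sX_{t_\IC} := \sX_{t_V} \tensor \IC$. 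By Proposition~\ref{submotive}, $L(\sA_s)_\IQ \iso \< \bxi_t \>^\perp_\IQ \subset \Pic(\sX_t)_\IQ$, while the complex-analytic form of the same statement --- implicit in the proof of Proposition~\ref{submotive}, via the Lefschetz $(1,1)$-theorem --- gives $L(\sA_{s_\IC})_\IQ \iso \< \bxi_{t_\IC} \>^\perp_\IQ \subset \Pic(\sX_{t_\IC})_\IQ$, compatibly with the specialization maps. Since $\bxi_{t_V}$ specializes $\bxi_{t_\IC}$ to $\bxi_t$, the orthogonal complements correspond, so the problem reduces to showing that $\Pic(\sX_{t_\IC})_\IQ \to \Pic(\sX_t)_\IQ$ is surjective. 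Because $s_\IC$ is a special point, $\MT(\sA_{s_\IC})$ is a torus, hence $\sX_{t_\IC}$ has complex multiplication by Lemma~\ref{CMcors}; and because $\sA_s$ is non-supersingular, $\sX_t$ is of finite height. Therefore \cite[Thm~1.1]{Ito} applies and completes the proof.

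The main obstacle is Ito's theorem itself: its proof uses the finite-height hypothesis in an essential way --- the formal Brauer group $\BR_{\sX_t}$ is then of finite height, which rigidifies the crystalline Tate classes and keeps new ones from appearing in the reduction. The non-supersingularity cannot be dropped: a supersingular K3 surface over $\bar{\IF}_p$ has Picard number $22$, whereas any lift of it to characteristic $0$ has Picard number at most $20$, so for supersingular $s$ the specialization map on special endomorphisms is far from surjective. A secondary point deserving care is the reduction to a point lying over the K3 moduli stack for a general $s \in \shS_{\fK}(G, \Ohm)(\bar{\IF}_p)$; in every situation where the proposition is invoked, $s$ already arises from a K3 surface, so this is automatic.
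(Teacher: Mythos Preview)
Your reduction to Ito's theorem via Proposition~\ref{submotive} is exactly the sketch the paper gives in its opening paragraph of the proof, and it is valid \emph{when} the image of $s_V$ in $\shS_{\fK^\ad}(G^\ad,\Ohm)$ lies in the image of the period map $\rho$ from $\wt{\Mod}_{2d,\fK^\ad,\IZ_{(p)}}$. The paper, however, does not stop there: it goes on to reinterpret Ito's computation (Steps~1--6) purely on the Shimura variety side, via the explicit description of the Mumford--Tate group from Theorem~\ref{Zarhin}, the reciprocity law for the canonical model, and a direct Frobenius-eigenvalue analysis on $\bL_{\ell,s}$, never mentioning a K3 surface.

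This matters, and the gap you flag as ``secondary'' is in fact the main one. Your claim that ``in every situation where the proposition is invoked, $s$ already arises from a K3 surface'' is not correct. In the proof of Theorem~\ref{CML}, the proposition is applied to the point $s'$, which is produced by \cite[Thm~(0.4)]{Modp} as a point in the isogeny class of $s$ admitting a CM lift $s'_{V'}$. There is no reason for the image of $s'$ (or of $s'_{V'}$) in $\shS_{\fK^\ad_p}(G^\ad,\Ohm)$ to lie in the image of $\rho$: surjectivity of the period map over $\bar{\IF}_p$ is only known under Matsumoto's hypothesis $p>18d+4$, whereas Theorem~\ref{CML} assumes only $p\ge 5$ and $p\nmid d$. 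Nor does your remark that $\dim_\IQ L(\sA_s)_\IQ$ is a CSpin-isogeny invariant help: moving $s$ within its isogeny class would require moving the lift $s_V$ along with it, and there is no guarantee the new lift remains special. So your argument establishes the proposition only under an extra hypothesis that is unavailable in the key application; the paper's direct proof is needed precisely to cover that case.
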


Here we are implicitly using \cite[Lem.~5.13]{CSpin}, which implies that the specalization map of endomorphisms $\End(\sA_{s_\IC}) \to \End(\sA_s)$ restricts to a map of special endomorphisms $L(\sA_{s_\IC}) \to L(\sA_s)$. 

\begin{proof}
This can be seen as a reinterpretation of \cite[Thm~1.1]{Ito}. To explain the idea, we first assume that there exists $t_V \in \wt{\Mod}_{2d, \fK^\ad, \IZ_{(p)}}(V)$ such that $\rho_{\fK^\ad, \IZ_{(p)}}(t_V)$ is the image of $s_V$.  Set $t = t_V \tensor \bar{\IF}_p$. Since $\sA_{s_V \tensor \IC}$ has CM, so does $X_{t_V \tensor \IC}$ (cf. Lem.~\ref{CMcors}). Then we can conclude by Prop.~\ref{submotive}. Now we sketch how to interpret the computation in \cite[Section 4]{Ito} without appealing to K3 surfaces.\\

\noindent \textit{Step 1: }Recall that $\bL_{B, s_\IC}$ carries a Hodge structure of K3 type. Set $T(\bL_{B, s_\IC}) := (\bL_{B, s_\IC}^{(0 ,0)})^\perp \subset \bL_{B, s_\IC} \tensor \IQ$. By Thm~\ref{Zarhin} and Lem.~\ref{CMcors}, $\End_\Hdg T(\bL_{B, s_\IC}) = E$ for some CM field $E$ such that $T(\bL_{B, s_\IC})$ is one-dimensional over $E$. Let $E_0$ be the maximal totally real subfield of $E$. Thm~\ref{Zarhin}(b) tells us that
$$\MT(\bL_{B, s_\IC}) = \MT(T(\bL_{B, s_\IC})) = \ker (\Nm : \mathrm{Res}_{E/\IQ} \IG_m \to \mathrm{Res}_{E_0/\IQ} \IG_m)$$
Denote this group by $G_0$. \\\\
\textit{Step 2: }By extending $V$ if necessary, we can find a number field $F$ with $E \subset F \subset K = V[1/p]$ such that $s_{K}$ arises from a $F$-valued point $s_F$ on $\Sh_K(G, \Ohm)$. Let $v$ be the finite place above $p$ given by the inclusion $F \subset K$ and let $\fq$ be the place of $E$ below $v$.  Let $F_v$ be the completion of $F$ at $v$ and $\sO_{F_v}$ be the ring of integers of $F_v$ with residue field $k(v)$. Then $s_{F_v} := s_F \tensor F_v$ extends to a $\sO_{F_v}$-valued point $s_{\sO_{F_v}}$ on $\shS_{\fK}(G, \Ohm)$ such that $s_{k(v)} := s_{\sO_{F_v}} \tensor k(v)$ gives the point $s$ when base changed to $\bar{\IF}_p$. To sum up, we have a commutative diagram 
\begin{center}
\begin{tikzcd}
\Spec \bar{\IF}_p \arrow[bend left]{rr}{s}\ \arrow{r}{} \arrow{d}{} & \Spec V \arrow{r}{s_V} \arrow{d}{} & \shS_{\fK}(G, \Ohm) \\
\Spec k(v)\arrow{r}{} & \Spec \sO_{F_v} \arrow{ur}{s_{\sO_{F_v}}}  & \Spec F_v \arrow{l}{} \arrow{u}{}
\end{tikzcd}
\end{center}
The descent of $s$ to $s_{k(v)}$ clearly endows $\bL_{\ell, s}$ with an action by the geometric Frobenius $\Fr \in \Gal(\bar{\IF}_p / k(v))$.  \\\\
\textit{Step 3: }Choose algebraic closures $\bar{F}$ and $\bar{F}_v$ for $F$ and $F_v$ and embeddings $\bar{F} \subset \bar{F}_v \subset \bar{K}$ which are compatible with $F \subset F_v \subset K$. Let $\overline{s_F}$ be the geometric point over $s$ given by $\bar{F}$. Then we have natural isomorphisms $L(\sA_{\overline{s_F}}) \iso L(\sA_{s_\IC}) $ and $\bL_{\IA_f^p, s_\IC} \iso \bL_{\IA^p_f,\overline{s_F}}$. Therefore, if we define $T(\bL_{\IA_f, \overline{s_F}})$ to be the orthogonal complement of the image of $L(\sA_{\overline{s_F}})$ in $\bL_{\IA_f, \overline{s_F}}$, then $\bL_{B, s_\IC} \tensor \IA_f \iso \bL_{\IA_f, \overline{s_F}}$ restricts to an isomorphism $$T(\bL_{B, s_\IC}) \tensor \IA_f \iso T(\bL_{\IA_f, \overline{s_F}}).$$
The $\Gal(\bar{F}/F)$-action on $\bL_{\IA_f,\overline{s_F}}$ restricts to one on $T(\bL_{\IA_f, \overline{s_F}})$. The theory of canonical models tells us that the induced action of $\Gal(\bar{F}/F)$ on $T(\bL_{B, s_\IC}) \tensor \IA_f$ is given by a homorphism $\gamma : \Gal(\bar{F}/F) \to G_0(\IA_f)$. Moreover, we obtain the following commutative diagram 
\begin{center}
\begin{tikzcd}
\IA^\times_F \arrow{r}{\Nm_{\IA_F/\IA_E}} \arrow{d}{\mathrm{art}_F} & \IA_E^\times \arrow{r}{\mathrm{proj}} & \IA^\times_{E, f} \arrow{d}{y \mapsto c(y)y^{-1}} \\
\Gal(F^\ab/F) \arrow{r}{\gamma} & G_0(\IA_f) \arrow{r}{} & G_0(\IA_f)/ G_0(\IQ)
\end{tikzcd}
\end{center}
where $F^\ab$ denotes the maximal abelian extension of $F$, $c$ the complex conjugation on $E \subset \IC$, and $\mathrm{art}_F$ the Artin reciprocity map (cf. \cite[Thm~2.1]{Ito}, \cite[Cor.~3.9.2]{Rizov}, \cite[Thm~12]{Taelman}).\footnote{In these references, the above diagram is stated in terms of K3 surfaces. However, Rizov's theorem \cite[Cor.~3.9.2]{Rizov} is a formal consequence of the rationality of the period map $\rho_{K, \IC}$. The diagram itself comes from the canonical model $\Sh_{\fK^\ad}(G^\ad, \Ohm)$ of $\Sh_{\fK^\ad}(G^\ad, \Ohm)_\IC$.} Here $\mathrm{art}_F$ is normalized so that the images of the uniformizers under the local Artin reciprocity maps act as lifts of geometric Frobenii on unramified extensions. \\\\
\textit{Step 4: }
Let $\pi_v$ be a uniformizer of $F_v$ and $x \in \IA^\times_F$ be the element with $\pi_v$ at $v$ and $1$ at other places. Let $\wt{\sigma} := \mathrm{art}_F(x)$. We use the diagram in step 3 to analyze the characteristic polynomial $f_{\wt{\sigma}}$ of $\wt{\sigma}$ acting on $T(\bL_{B, s_\IC}) \tensor \IQ_\ell$ for $\ell \neq p$. Let $z$ be the image of $x$ under the composition 
$$ \IA^\times_F \stackrel{\Nm}{\to} \IA^\times_{E} \stackrel{\mathrm{pr}}{\to} \IA^\times_{E, f} \stackrel{y \mapsto c(y)y^{-1}}{\to} G_0(\IA_f)  \subset (E \tensor \IA_f)^\times. $$
The commutativity of the diagram in step 3 tells us that there exists $a \in G_0(\IQ)$ such that $\gamma(\wt{\sigma}) = z a$. Then we have that
$$ f_{\wt{\sigma}} = f_{a}^{[E : \IQ(q)]}$$
where $f_a$ is the minimal polynomial of $a \in G_0(\IQ) \subset E^\times$. \\\\ 
\textit{Step 5: }We claim that $\fq$ splits in $E$. By way of contradiction, suppose that $\fq$ does not split in $E$. Then by \cite[Lemma 4.1]{Ito} $a \in E^\times$ is a root of unity. This implies that a power of $\Fr$ acts trivially on $\bL_{\ell, s} \subset \End (H^1(\sA_s, \IQ_\ell))$. By the proof of \cite[Proposition 21]{Charles}, $\sA_s$ must be supersingular. We repeat the argument in \textit{loc. cit.} using our notation for convenience of the reader: By construction of the \'etale local system $\bL_\ell$, there is a $\Fr$-equivariant isomorphism
\begin{align}
\label{equivar}
    \Cl(\bL_{\ell, s}) \iso \End_{\Cl(L_d)} (H^1_\et(\sA_s, \IZ_\ell)).
\end{align}
Here $\Cl(L)$ acts on $H^1_\et(\sA_s, \IZ_\ell)$ from the right. Recall that $\sA_s$ carries a $\IZ/2 \IZ$-grading and a (left) $\Cl(L)$-action. Let $\Cl^+(L)$ (resp. $\Cl^-(L)$) be the even (resp. odd) part of $\Cl(L)$. Similarly, let $\sA_s^+$ (resp. $\sA_s^-$) be the even (resp. odd) part of $\sA_s$. Any invertible element of odd degree in the algebra $\Cl(L)$ induces a quasi-isogeny $\sA^+_s \to \sA_s^-$, so $\sA_s^+$ and $\sA_s^-$ are isogenous. Therefore, it suffices to show that $\sA_s^+$ is supersingular. For $\sA_s^+$, (\ref{equivar}) restricts to an isomorphism 
\begin{align}
    \label{equivar+}
    \Cl^+(\bL_{\ell, s}) \iso \End_{\Cl^+(L_d)} (H^1_\et(\sA_s^+, \IZ_\ell)).
\end{align} If for some $m$, $\Fr^m$ acts trivially on $\End_{\Cl^+(L_d)} (H^1_\et(\sA_s^+, \IZ_\ell))$, then $\Fr^m$ acts through the center of $\Cl^+(\bL_{\ell, s})$. However, $\Cl^+(\bL_{\ell,s})$ is a central simple algebra, so $\Fr^m$ must be a homothety. This forces $\sA_s$ to be supersingular, which contradicts our assumption. Therefore, $\fq$ splits in $E$. By \cite[Lemma 4.1]{Ito} again, none of the roots of $f_{\wt{\sigma}}$ can be a root of unity. \\\\
\textit{Step 6: }Now embed $L(\sA_{s_\IC})$ into $\bL_{\ell, s} \tensor \IQ_\ell$ via the composition of maps
$$ L(\sA_{s_\IC}) \to L(\sA_s) \into \bL_{\ell, s} \tensor \IQ_\ell $$
where the first map is given by specialization. By the smooth and proper base change theorem, $f_{\wt{\sigma}}$ is equal to the characteristic polynomial of $\Fr$ acting on the orthogonal complement of $L(\sA_{s_\IC})$ in $\bL_{\ell, s} \tensor \IQ_\ell$. Since none of the roots of $f_{\wt{\sigma}}$ is a root of unity, the composition
$$ L(\sA_{s_{\IC}}) \tensor \IQ_\ell  \to L(\sA_{s}) \tensor \IQ_\ell \to \varinjlim_{m} (\bL_{\ell, s} \tensor \IQ_\ell)^{\Fr^m = 1}$$
is an isomorphism.  Therefore, $L(\sA_{s_{\IC}})_\IQ \to L(\sA_s)_\IQ$ is an isomorphism. 
\end{proof}

\section{Formal Groups and Cohomology} 
In this section, we review Nygaard-Ogus theory and prove an important lifting lemma which we shall use in Section~\ref{Proofs}. 

\subsection{Formal Brauer Groups}
An abelian variety's crystalline cohomology can be recovered from its $p$-divisible group via Dieudonn\'e modules. There is a similar story for K3 surfaces of finite height, for which the role of $p$-torsion is taken by the formal Brauer group. Let $k$ be an algebraically closed field of characteristic $p > 2$.  

\subsubsection{} Let $X$ be a K3 surface over $R$, where $R$ is an Artinian local ring over $W_n(k) := W(k)/(p^n)$ for some $n$. We denote the formal Brauer group of $X$ by $\BR_X$. Assume that $\BR_{X \tensor_R k}$ has finite height. 

Define a functor $\Psi_{X}$ from the category $\mathsf{Art}_R$ of Artinian local $R$-algebras with residue field $k$ to the category $\mathsf{Ab}$ of abelian groups by setting 
$$ \Psi_{X}(A) = H^2_\fl(X \tensor_R A, \mu_{p^\infty}) $$
where $H^2_\fl$ denotes the second flat cohomology. Set $\sD = H^2_\fl(X \tensor_R k, \mu_{p^\infty})$. By \cite[Lem.~3.1]{NO}, $\sD$ is an abstract $p$-divisible group. We will view it as an \'etale $p$-divisible group over $k$. 

\begin{proposition}
$\Psi_{X}$ defines a $p$-divisible group over $\Spec R$ with connected component $\Psi_{X}^0 = \BR_{X}$ and \'etale part $\Psi_{X}^\et = \sD_R$, where $\sD_R$ denotes the unique $p$-divisible group over $\Spec R$ lifting $\sD/ \Spec k$. 
\end{proposition}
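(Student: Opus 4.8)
The plan is to isolate the connected and \'etale parts of $\Psi_X$ directly, using the Kummer exact sequence together with Artin--Mazur's analysis of the formal Brauer group, and then to recognize the resulting extension as the connected--\'etale sequence of a $p$-divisible group.

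Working on the category $\mathsf{Art}_R$, I would set $\Phi_X(A) := \ker(\Psi_X(A) \to \Psi_X(k) = \sD)$. Since $X \to \Spec R$ is a K3 surface, on the closed fibre one has $H^1(X_k, \mathcal{O}_{X_k}) = 0$, $H^2(X_k, \mathcal{O}_{X_k}) = k$ and $H^3(X_k, \mathcal{O}_{X_k}) = 0$. The vanishing of $H^3$ kills the obstruction to lifting a class along any square-zero extension, so $\Psi_X$ is formally smooth on $\mathsf{Art}_R$; in particular $\Psi_X(A) \twoheadrightarrow \sD$ for every $A \in \mathsf{Art}_R$, giving a short exact sequence of functors $0 \to \Phi_X \to \Psi_X \to \underline{\sD} \to 0$, where $\underline{\sD}$ is the constant functor with value $\sD$. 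By Artin--Mazur's theorem (as developed for this setting in \cite{NO}), $\Phi_X$ is pro-represented by a formal Lie group over $R$; the Kummer sequence $0 \to \Pic(X\otimes_R A)/p^n \to \Psi_X[p^n](A) \to \Br(X\otimes_R A)[p^n] \to 0$ together with $H^1(X_k,\mathcal{O}_{X_k}) = 0$ (which forces $\Pic(X\otimes_R A) \hookrightarrow \Pic(X_k)$, so the Picard term contributes nothing infinitesimal) identifies this formal group with the formal Brauer group $\BR_X$, of dimension one by $H^2(X_k,\mathcal{O}_{X_k}) = k$. The hypothesis that $\BR_{X\otimes_R k}$ has finite height then makes $\BR_X$ a connected $p$-divisible formal group over $R$, with each $\BR_X[p^n]$ finite locally free.

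For the quotient, \cite[Lem.~3.1]{NO} says $\sD$ is an abstract $p$-divisible group, so the constant functor $\underline{\sD}$ is represented by the unique \'etale $p$-divisible group $\sD_R$ over $R$ lifting $\sD$ over $k$ (\'etale $p$-divisible groups over the Artinian local ring $R$ being equivalent, via reduction, to those over $k$). Hence the displayed sequence becomes $0 \to \BR_X \to \Psi_X \to \sD_R \to 0$, an extension of fppf sheaves of a $p$-divisible group by a $p$-divisible group; therefore $\Psi_X$ is itself a $p$-divisible group over $\Spec R$, each $\Psi_X[p^n]$ being finite locally free as an extension of $\sD_R[p^n]$ by $\BR_X[p^n]$. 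Since $\BR_X$ is connected and $\sD_R$ is \'etale, this is precisely the connected--\'etale sequence of $\Psi_X$, so $\Psi_X^0 = \BR_X$ and $\Psi_X^\et = \sD_R$, as claimed.

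The step I expect to be most delicate is the identification of $\Phi_X$ with the $p$-divisible formal group $\BR_X$: it combines Artin--Mazur pro-representability, the three K3 cohomology computations, and --- crucially --- the finite-height hypothesis, which is exactly what rules out $\BR_X$ having infinite height (in which case $\Psi_X$ would fail to be $p$-divisible, as in the supersingular case). Checking that the Picard contribution in the Kummer sequence genuinely carries no infinitesimal part, so that the whole connected part of $\Psi_X$ comes from the Brauer side, is the one place where $H^1(X_k,\mathcal{O}_{X_k}) = 0$ must be used with care; the remaining ingredients --- formal smoothness from $H^3 = 0$, the unique lift of an \'etale $p$-divisible group over $R$, and the order bookkeeping giving local freeness --- are routine.
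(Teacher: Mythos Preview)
Your argument is correct and is essentially an unpacking of the references the paper cites: the paper's own proof consists only of pointing to \cite[IV Prop.~1.8]{AM} and \cite[Prop.~3.2]{NO}, and what you have written is precisely the strategy those results implement (Kummer sequence, Artin--Mazur pro-representability, vanishing of $H^1$ and $H^3$, finite height to get $p$-divisibility, and the connected--\'etale decomposition). There is no substantive difference in approach.
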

\begin{proof}
This is a corollary to \cite[IV Prop.~1.8]{AM}. See also \cite[Prop.~3.2]{NO}. 
\end{proof}

Recall that Tate established an equivalence between the category of divisible connected formal Lie groups and that of connected $p$-divisible groups over a complete Noetherian local ring with residue field of characteristic $p$ (cf. \cite[Prop.~1]{Tate}). We will freely make use of this equivalence of categories. 

There is a canonical sequence of F-crystals on $\Cris(R/W(k))$ (cf. \cite[Thm~3.20]{NO}): 
 \begin{align}
 \label{rhoeta}
      0 \to \ID(\Psi_X^*) \stackrel{\varrho}{\to} H^2_\cris(X) \stackrel{\theta}{\to} \ID(\BR_X)(-1) \to 0
 \end{align}
For any object $S$ in the nilpotent crystalline site of $R/W(k)$, the above sequence induces a short exact sequence of (Zarsiki) sheaves over $S$. Here $H^2_\cris(X)$ denotes the crystal $R^2 f_{\cris *} \sO_{X/W(k)}$, where $f : X \to \mathrm{Spec\,} R$ is the structure morphism. 

The readers are refered to \cite[Sect.~3]{NO} for the construction of $\varrho$. Here we explain that $\theta$ is constructed out of $\varrho$ by duality: Let $\iota : \ID(\BR_X^*) \to \ID(\Psi_X^*)$ be the map induced by the inclusion $\BR_X \to \Psi_X$ and let $\kappa : \ID(\BR_X^*)^\vee \stackrel{\sim}{\to} \ID(\BR_X)(1)$ be the isomorphism induced by the canonical pairing $\ID(\BR_X^*) \times \ID(\BR_X) \to \sO_{R/W(k)}(-1)$. Poincar\'e duality on $H^2_\cris(X)$ induces a map $\varrho^\vee : H^2_\cris(X) \to \ID(\Psi_X^*)^\vee(-2)$. The map $\theta$ is defined as the composition 
\begin{align}
\label{duality}
    H^2_\cris(X) \stackrel{\varrho^\vee}{\to} \ID(\Psi_X^*)^\vee(-2) \stackrel{\iota^\vee(-2)}{\to} \ID(\BR_X^*)^\vee(-2) \stackrel{\kappa(-2)}{\to} \ID(\BR_X)(-1).  
\end{align}

\subsection{Constructing Liftings} 
For the rest of section $4$, assume that $p \ge 5$. Let $X$ be a K3 surface over $k$ with $\BR_X$ of finite height. There is a unique splitting of (\ref{rhoeta}) for $X$, so that there is a canonical identification (cf. the proof of \cite[Prop.~5.4]{NO})
\begin{equation}
    \label{decomp}
    H^2_\cris(X/W(k)) = \IK(\BR_X^*) := \ID(\BR_{X}^*) \oplus \ID(\sD^*) \oplus \ID(\BR_{X})(-1).
\end{equation}
The construction of $\IK(\BR_X)$ explains the slope filtration on $H^2_\cris(X/W(k)) $:
$ H^2_\cris(X/W(k)) _{< 1} = \ID(\BR_{X}^*) , H^2_\cris(X/W(k)) _{= 1} =  \ID(\sD^*)$ and $H^2_\cris(X/W(k)) _{> 1} = \ID(\BR_{X})(-1)$. 

Let $K$ be a finite extension of $K_0 := W(k)[1/p]$ and let $V = \sO_K$ be the ring of integers of $K$. Suppose $\shG_V$ is a lifting of $\BR_{X}$ over $V$. Then $\shG_V$ determines a filtration $\Fil_{\shG_V} \subset \ID(\BR_X) \tensor_{W(k)} K$. We can use $\Fil_{\shG_V}$ to define on $\IK(\BR_X) \tensor_{W(k)} K$ a two-step filtration $\Fil_{\shG_V}^\bullet$:
$$ \Fil^2_{\shG_V} \subset \Fil^1_{\shG_V} \subset \Fil^0_{\shG_V} = \IK(\BR_X) \tensor_{W(k)} K $$
by setting $\Fil^2_{\shG_V} := \Fil_{\shG_V}(-1) \subset \ID(\BR_X)(-1) \tensor_{W(k)} K$ and $\Fil^1_{\shG_V} := (\Fil^2_{\shG_V})^\perp$ with respect to the bilinear pairing on $\IK(\BR_X) \tensor_W K = H^2_\cris(X/W(k)) \tensor_{W(k)} K$.  

The following result of Nygaard-Ogus allows us to choose a lift a K3 surface by choosing one for its formal Brauer group.
\begin{proposition}
\label{NyOMain}
For each $\shG_V$ lifting $\what{\Br}_{X}$ as above, there exists a K3 surface $X_V$ over $V$ lifting $X$ with the following properties: (a) $\Psi_{X_V} \iso \shG_V \oplus \shD_V$, where $\shD_V$ is the unique $p$-divisible group over $\mathrm{Spec\,} V$ lifting $\sD$. (b) The natural map $\Pic(X_{V}) \to \Pic(X)$ is an isomorphism. (c) The Berthelot-Ogus isomorphism $\sigma_\cris : H^2_\dR(X_V/V) \tensor K \stackrel{\sim}{\to} H^2_\cris(X/W(k)) \tensor K = \IK(\BR_X) \tensor K$ respects the pairings and takes the Hodge filtration $\Fil^\bullet_\Hdg$ to the filtration $\Fil^\bullet_{\shG_V}$. 
\end{proposition}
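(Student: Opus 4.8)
The plan is to produce $X_V$ from the Nygaard--Ogus deformation dictionary, so that properties (a)--(c) fall out of the construction. Write $\mathfrak{Def}_X$ for the formal deformation functor of $X$ on complete local $W(k)$-algebras with residue field $k$, and $\mathfrak{Def}_{\BR_X}$ for that of the $p$-divisible group $\BR_X$; the first is pro-representable and formally smooth of relative dimension $20$ since K3 deformations are unobstructed, and the second is pro-representable and formally smooth by the deformation theory of $p$-divisible groups. Taking formal Brauer groups gives a morphism $\mathfrak{Def}_X\to\mathfrak{Def}_{\BR_X}$, and \cite{NO} constructs a canonical section $\sigma$ of it. Unwinding the construction, $\sigma$ sends a lift $\shG$ of $\BR_X$ --- which via \eqref{rhoeta} and \eqref{decomp} determines a line $\Fil_\shG\subset\ID(\BR_X)\otimes K$ --- to the lift of $X$ whose Hodge line inside $H^2_\cris(X/W(k))\otimes K=\IK(\BR_X)\otimes K$ is $\Fil^2_\shG:=\Fil_\shG(-1)\subset\ID(\BR_X)(-1)\otimes K$, with $\Fil^1_\shG:=(\Fil^2_\shG)^\perp$, exactly the two-step filtration described in the paragraph preceding the statement. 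The point, due to Ogus, is that the crystalline period morphism identifies $\mathfrak{Def}_X$ with the formal completion along the Hodge line of $X$ of the scheme of isotropic lines in $H^2_\cris(X/W(k))$, and the section $\sigma$ is the restriction of this identification to the lines lying in the slope-$>1$ summand $\ID(\BR_X)(-1)$.

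\textbf{Construction of $X_V$.} The lift $\shG_V$ of $\BR_X$ over $V$ is a $V$-point of $\mathfrak{Def}_{\BR_X}$, and $\sigma$ carries it to a $V$-point of $\mathfrak{Def}_X$, i.e.\ a formal K3 space $\widehat X_V$ over $\Spf V$ lifting $X$. To algebraize $\widehat X_V$ I would fix an ample line bundle $\mathcal L_0$ on $X$ and lift it formally along $\widehat X_V$: the obstruction at each square-zero step lies in $H^2(X,\sO_X)$ and equals the cup product of $c_1(\mathcal L_0)$ with the Kodaira--Spencer class of the deformation. The deformation selected by $\sigma$ moves the Hodge line only inside $\ID(\BR_X)(-1)$, so (by local Torelli and the de Rham--crystalline comparison) its Kodaira--Spencer direction pairs $c_1(\mathcal L_0)$ into the slope-$>1$ part of the K3 crystal; but $c_1(\mathcal L_0)$ is fixed by Frobenius up to the scalar $p$, hence lies in the slope-$1$ summand $\ID(\sD^*)$ of \eqref{decomp}, which is orthogonal to $\ID(\BR_X)(-1)$ under the Poincar\'e pairing. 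Thus the obstruction vanishes, $\mathcal L_0$ lifts to a formally ample line bundle on $\widehat X_V$, and Grothendieck existence algebraizes the pair to a projective K3 surface $X_V$ over $V$ lifting $X$.

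\textbf{Verifying (a)--(c).} For (c): by construction the crystalline period of $\widehat X_V$ is $\Fil^\bullet_{\shG_V}$, which is precisely the statement that the Berthelot--Ogus isomorphism $H^2_\dR(X_V/V)\otimes K\iso H^2_\cris(X/W(k))\otimes K=\IK(\BR_X)\otimes K$ carries the Hodge filtration to $\Fil^\bullet_{\shG_V}$; that it is an isometry follows since Poincar\'e duality is respected by the Berthelot--Ogus isomorphism. For (a): since $\sigma$ is a section of the formal-Brauer-group map, $\BR_{X_V}\iso\shG_V$; as $\sD$ lifts uniquely to the \'etale $p$-divisible group $\sD_V$ and the connected--\'etale splitting of $\Psi$ is canonical (cf.\ \cite[IV]{AM} and the Proposition preceding \eqref{rhoeta}), $\Psi_{X_V}\iso\BR_{X_V}\oplus\sD_V=\shG_V\oplus\sD_V$. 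For (b): injectivity of $\Pic(X_V)\to\Pic(X)$ is standard for a smooth proper family over a discrete valuation ring; for surjectivity, given $\mathcal L\in\Pic(X)$ the class $c_1(\mathcal L)$ lies in the slope-$1$ summand $\ID(\sD^*)$ and is therefore automatically orthogonal to $\Fil^2_{\shG_V}\subset\ID(\BR_X)(-1)$, i.e.\ lies in $\Fil^1_{\shG_V}$, so the crystalline form of Deligne's criterion for lifting line bundles (in the version used by Nygaard--Ogus) shows $\mathcal L$ extends to $X_V$. Hence $\Pic(X_V)\to\Pic(X)$ is an isomorphism.

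\textbf{Main obstacle.} The crux is the interaction of (b) with the algebraization step: one needs that \emph{every} line bundle on $X$, in particular an ample one, survives the \emph{specific} deformation singled out by $\shG_V$, and this is exactly where the finite-height hypothesis is used --- line-bundle classes live in the slope-$1$ summand of the K3 crystal and are thereby forced orthogonal to $\Fil^2_{\shG_V}$. If one declines to invoke the Nygaard--Ogus section as a black box, the real work moves one step earlier: one must show that the crystalline period morphism is a formal isomorphism and that the two-step recipe $\shG\mapsto\Fil^\bullet_\shG$ has image inside its target, i.e.\ reprove the pertinent parts of \cite{NO} and of Ogus's theory of K3 crystals. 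Everything else --- the passage from $\shG_V$ to a $V$-point, Grothendieck existence, and reading off (a) and (c) --- is formal.
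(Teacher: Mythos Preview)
Your overall strategy coincides with the paper's: both deduce the proposition from Nygaard--Ogus \cite[Prop.~5.4, 5.5]{NO}, with (c) read off from the construction and (b) obtained by observing that crystalline Chern classes of line bundles land in the slope-$1$ summand $\ID(\sD^*)$ and are therefore orthogonal to $\Fil^2_{\shG_V}\subset\ID(\BR_X)(-1)$. The paper's contribution beyond the citation is a careful self-contained proof of (b): it walks through the Artinian thickenings $A_n=V/(t^{n+1})$, uses Ogus's criterion \cite[Prop.~1.12]{Ogus} at each step, and checks via an auxiliary unramified base change that $c_{1,\cris}(\zeta)_{A_{n+1}}$ really lies in the $\ID(\shD^*_{n+1})_{A_{n+1}}$ component. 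Your one-shot invocation of ``Deligne's criterion over $V$'' elides this, since for ramified $V$ the ideal $\mathfrak m_V$ need not carry a PD structure and the Berthelot--Ogus isomorphism is only available after inverting $p$; the obstruction-theoretic version you use in the algebraization step (pairing $c_1$ with the Kodaira--Spencer class at each square-zero stage) is the correct substitute and is essentially what the paper is doing, just phrased integrally.

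There is, however, a genuine error in your argument for (a). You assert that the connected--\'etale sequence of $\Psi_{X_V}$ splits canonically, but this is false in general: already for the $p$-divisible group of an ordinary elliptic curve over $V$, the sequence $0\to\widehat{\IG}_m[p^\infty]\to E[p^\infty]\to\IQ_p/\IZ_p\to 0$ splits only for the canonical lift. The proposition you cite gives only the connected and \'etale \emph{pieces}, not a splitting. The isomorphism $\Psi_{X_V}\iso\shG_V\oplus\shD_V$ is a feature of the specific Nygaard--Ogus lift, built into \cite[Prop.~5.4]{NO} at each Artinian level via the explicit K3-crystal structure on $\IK(\shG_n)$; it is not a formal consequence of knowing $\BR_{X_V}\iso\shG_V$. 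So (a) should be attributed directly to the Nygaard--Ogus construction rather than to a general splitting principle.
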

\begin{proof}
This is \cite[Prop.~5.5]{NO}, except that we do not tensor $\sigma_\cris$ with $\bar{K}_0$ and let the crystalline-Weil group $W_\cris(\bar{K}_0)$ (cf. \cite[Def.~4.1]{BO}) act on both sides. Here $\bar{K}_0$ is an algebraic closure of $K_0$ containing $K$ as a subfield. Indeed, \cite[Prop.~5.5]{NO} tells us that there is lifting $X_V$ such that the isomorphism
\begin{align}
\label{crysWeil}
    H^2_\cris(X_V/V) \tensor_V \bar{K}_0 \stackrel{\sim}{\to} H^2_\cris(X/W(k)) \tensor_{W(k)} \bar{K}_0 = \IK(\BR_X) \tensor_{W(k)} \bar{K}_0
\end{align}
takes $\Fil^\bullet_\Hdg$ to $\Fil_{\shG_V}^\bullet$ and respects the pairings and the action of the crystalline-Weil group $W_\cris(\bar{K}_0)$. The above isomorphism is constructed as $\sigma_\cris \tensor_V \mathrm{id}_{\bar{K}_0}$ (cf. \cite[Thm~4.2]{BO} and its proof). As explained in \textit{loc. cit.}, this isomorphism is independent of the choice of $K$ in the following sense: Suppose that $K' \subset \bar{K}_0$ is a finite extension of $K$ and $V'$ is the ring of integers of $K'$. By \cite[Prop.~2.7]{BO}, if we form the Berthelot-Ogus isomorphism $\sigma_\cris' : H^2_\dR(X_V \tensor_V V' / V') \tensor_{V'} K' \stackrel{\sim}{\to} H^2_\cris(X/W(k)) \tensor_{W(k)} K'$ for $X_V \tensor_V V'$, then $\sigma'_\cris \tensor_{V'} \bar{K}_0 = \sigma_\cris \tensor_V \bar{K}_0$ when we identify $H^2_\dR(X_V/V) \tensor_V K'$ with $H^2_\dR(X_V \tensor_V V'/V') \tensor_{V'} K'$. 

The proof of \cite[Prop.~5.5]{NO} mentioned that the arguments for \cite[Prop.~1.8]{Nygaard} show that the natural map $\Pic(X_V) \to \Pic(X_V \tensor V/(p))$ is an isomorphism. In fact, the arguments for \cite[Prop.~1.8]{Nygaard} imply the stronger statement (b).\footnote{We thank an anonymous referee for pointing this out.} Here we present an alternative proof in order to be a little more self-contained. To begin, we give some detail on the construction of $X_V$. Let $t$ be a uniformizer of $V$ and let $e$ be the ramification degree of $V$ over $W(k)$. For each number $n = 0, 1, \cdots, e - 1$, set $A_n := V/ (t^{n + 1})$, $\shG_n := \shG_V \tensor A_n$ and $\shD_n := \shD_V \tensor A_n$. Set $\IK(\shG_n) := \ID(\shG_n^*) \oplus \ID(\shD_n^*) \oplus \ID(\shG_n)(-1)$. The paragraph above \cite[Prop.~5.4]{NO} has explained how to put a K3 crystal structure (cf. \cite[Def.~5.1]{NO}) on $\IK(\shG_n)$. Since $\shG_n$ is a deformation of $\shG_0 = \Br_{X}$, \cite[Prop.~5.4]{NO} gives us a deformation $X_n$ of $X_0 = X$ to $A_n$, together with isomorphisms $h_n : \Br_{X_n} \iso \shG_n$, $j_n : \Psi_{X_{n}} \iso \shG_{n} \oplus \shD_n$ and an isomorphism of K3 crystals $i_n : H^2_\cris(X_n) \iso \IK(\shG_n)$. One can check from the proofs of \cite[Thm.~5.3, Prop.~5.4]{NO} that each $X_{n + 1}$ is constructed as a deformation of $X_n$, and $(h_{n + 1}, j_{n + 1}, i_{n + 1})$ restricts to $(h_n, j_n, i_n)$. Finally, note that $(V, (p))$ has a natural PD structure. $X_V$ is constructed as the deformation of $X_{e - 1}$ with the desired Hodge filtration on $H^2_\dR(X_V/V) \iso H^2_\cris(X_{e - 1})_V$. In particular, $X_V \tensor V/(p) = X_{e - 1}$.

We first show that for $n \le e - 2$, every line bundle $\zeta \in \Pic(X_n)$ lifts to $X_{n + 1}$. This implies that $\Pic(X_{e - 1}) \to \Pic(X)$ is an isomorphism. Note that there is a canonical isomorphism $H^2_\cris(X_n)_{A_{n + 1}} \iso H^2_\dR(X_{n + 1}/A_{n + 1})$. By \cite[Prop.~1.12]{Ogus}, it suffices to check that $c_{1, \cris}(\zeta)_{A_{n + 1}}$ lies in $\Fil^1 H^2_\dR(X_{n + 1}/A_{n + 1})$, which is equal to $[\Fil^2 H^2_\dR(X_{n + 1}/A_{n + 1})]^\perp$ in our case. It follows from the K3 crystal structure on $\IK(\shG_{n + 1})$ that $\ID(\shD^*_{n + 1})$ is orthogonal to $\ID(\shG_{n + 1})(-1)$ and $\Fil^2 H^2_\dR(X_{n + 1}/A_{n + 1})$ lies (via $i_{n + 1}$) in the $\ID(\shG_{n + 1})(-1)_{A_{n + 1}}$ component. Therefore, it suffices to show that $c_{1, \cris}(\zeta)_{A_{n + 1}} \in \ID(\shD^*_{n + 1})_{A_{n + 1}}$. 

Let $R$ a finite flat extension of $W(k)$ with ramification degree $n + 2$ and fix an isomorphism $R/ (p) \iso A_{n + 1}$. Let $L$ be the fraction field of $R$. Let $\pi$ be a uniformizer of $R$ which is sent to the image of $t \in V$ in $A_{n + 1}$. Then $R/ (p) \iso A_{n + 1}$ descends to an isomorphism $R/(\pi^{n + 1}) \iso A_{n}$. By \cite[Lem.~3.9]{BO}, the ideal $(\pi^{n + 1}) \subset R$ has a canonical PD structure. By applying \cite[Cor.~2.2]{BO} to $X_n$ and $X \tensor A_n$, we obtain a canonical isomorphism $H^2_\cris(X_n)_R \tensor_R L \iso H^2_\cris(X/W) \tensor_W L$. By \cite[Cor.~3.6]{BO}, this isomorphism is compatible with the Chern class maps, so that $c_{1, \cris}(\zeta)_R \tensor 1$ is sent to $c_{1, \cris}(\zeta_0) \tensor 1$, where $\zeta_0$ is the restriction of $\zeta$ to $X_0 = X$. Similarly, one slightly adapts the proof of \cite[Prop.~3.14]{BO} to obtain a natural isomorphism $\IK(\shG_n)_R \tensor_R L \iso \IK(\BR_X)_W \tensor_W L$ which respects their three components. This isomorphism is compatible with $H^2_\cris(X_n)_R \tensor_R L \iso H^2_\cris(X/W) \tensor_W L$. Finally, since $c_{1, \cris}(\zeta_0) \in H^2_\cris(X/W)$ lies in the $\ID(\sD^*)_W$ component, $c_{1, \cris}(\zeta)_R$ lies in the $\ID(\shD_n^*)_R$ component. Therefore, $c_{1, \cris}(\zeta)_{A_{n + 1}}$, which is the mod $p$ reduction of $c_{1, \cris}(\zeta)_R$, lies in $\ID(\shD^*_n)_{A_{n + 1}} = \ID(\shD^*_{n + 1})_{A_{n + 1}}$. 

A little extension of the above argument shows that $\Pic(X_V) \to \Pic(X_{e - 1})$ is an isomorphism. Let $\zeta_{e - 1}$ be an element in $\Pic(X_{e - 1})$. Recall the natural isomorphism $H^2_\cris(X_{e - 1})_V \iso H^2_\dR(X_V/V)$. By the above argument, we know that $c_{1, \cris}(\zeta_{e - 1})_V$ lies in $\ID(\shD_{e - 1}^*)_V$. By the construction of $X_V$, $\Fil^2 H^2_\dR(X_V)$ lies in the $\ID(\shG_{e - 1})(-1)_V$ component of $H^2_\cris(X_{e - 1})_V$. Hence $c_{1, \cris}(\zeta_{e - 1})_V$ is orthogonal to $\Fil^2 H^2_\dR(X_V/V)$ and lies in $\Fil^1 H^2_\dR(X_V/V)$. By \cite[Prop.~1.12]{Ogus} again, $\zeta_{e - 1}$ lifts to $X_V$. 
\end{proof}

\begin{remark}
\label{ampleliftstoample}
Let $X$ and $X_V$ be as above. It follows from \cite[Thm~1.2.17]{Laz} that the ample cone of $\Pic(X)$ lies inside the ample cone of $\Pic(X_K)$. By a theorem of Kleiman (cf. \cite[Thm~1.4.23]{Laz}), the nef cone of $X$ or $X_K$ is the closure of the ample cone, so the nef cone of $\Pic(X)$ also lies inside that of $\Pic(X_K)$. Finally, a nef line bundle is big if and only if its top self-intersection is strictly positive (cf. \cite[Thm~2.2.16]{Laz}), so the lift of a big and nef divisor on $X$ to $X_{K}$ is also big and nef. 
\end{remark}

\begin{lemma}
\label{LiftFG}
Let $\shG$ be a one dimensional formal group of height $h$ over $k = \bar{\IF}_p$ and $\alpha : \shG \to \shG$ be an isogeny. There exists a finite flat extension $V$ over $W$ of ramification index $\le h$ and a lift $\shG_V$ of $\shG$ to $V$ such that $\alpha$ lifts to an isogeny $\alpha_V : \shG_V \to \shG_V$. 
\end{lemma}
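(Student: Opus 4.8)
\emph{Plan.} The statement is a lifting problem for one-dimensional formal groups, and the plan is to deduce it from Lubin--Tate theory together with the classification of one-dimensional formal groups over $\bar{\IF}_p$. Write $D$ for the central division algebra over $\IQ_p$ of invariant $1/h$. Since $k=\bar{\IF}_p$ is algebraically closed, $\shG$ is the unique one-dimensional formal group of height $h$ over $k$, and $\End(\shG)$ is an order (in fact the maximal order) in $D$; the isogeny $\alpha$ is a nonzero element of this order, hence invertible in $D$. First I would produce a large commutative subalgebra containing $\alpha$: set $E=\IQ_p(\alpha)\subset D$, which is a finite field extension of $\IQ_p$ because a commutative finite $\IZ_p$-subalgebra of a division algebra is a field. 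The centralizer $C_D(E)$ is a central division algebra over $E$; choosing a maximal subfield $E'$ of it gives $E\subseteq E'\subseteq D$ with $E'$ a maximal commutative subfield of $D$, so $[E':\IQ_p]=h$. Since $\alpha$ is integral over $\IZ_p$ and lies in $E'$, we have $\alpha\in\sO_{E'}$.

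Next I would realize $\shG$ as a formal $\sO_{E'}$-module and lift it by Lubin--Tate theory. The inclusion $\sO_{E'}\hookrightarrow\End(\shG)$ makes $\shG$ an $\sO_{E'}$-module; because $[p]_{\shG}$ kills $\Lie(\shG)=k$, the induced character $\sO_{E'}\to k$ factors through $\sO_{E'}/\mathfrak{m}_{E'}=\IF_{q'}$ via a fixed embedding $\chi\colon\IF_{q'}\hookrightarrow k$, and $\shG$ becomes a formal $\sO_{E'}$-module over $(k,\chi)$ of $\sO_{E'}$-height $h/[E':\IQ_p]=1$. By Lubin--Tate theory all formal $\sO_{E'}$-modules of $\sO_{E'}$-height $1$ over $(k,\chi)$ are isomorphic as formal $\sO_{E'}$-modules, and each is the base change of the Lubin--Tate formal $\sO_{E'}$-module $\shG_\pi$ over $\sO_{E'}$ along $\sO_{E'}\twoheadrightarrow\IF_{q'}\stackrel{\chi}{\hookrightarrow}k$. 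I would then set $V:=\sO_{E'}\otimes_{W(\IF_{q'})}W$, using the unramified lift of $\chi|_{\IF_{q'}}$; this $V$ is a complete discrete valuation ring, finite flat over $W$, totally ramified of degree $e(E'/\IQ_p)\le[E':\IQ_p]=h$, with residue field $k$. The formal $\sO_{E'}$-module $\shG_\pi\otimes_{\sO_{E'}}V$ over $V$ reduces to $\shG$, and transporting it along an $\sO_{E'}$-linear isomorphism of special fibres yields a lift $\shG_V$ of $\shG$ over $V$ carrying an $\sO_{E'}$-action reducing to the given one. Then $\alpha\in\sO_{E'}$ acts on $\shG_V$, giving an endomorphism $\alpha_V\colon\shG_V\to\shG_V$ lifting $\alpha$; and $\alpha_V$ is an isogeny, since its reduction $\alpha$ is and finiteness together with flatness spread out from the closed fibre to $V$.

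The main obstacle is exactly that the identification of special fibres must be $\sO_{E'}$-linear, so that the \emph{given} $\alpha$ lifts and not merely some $D^\times$-conjugate of it; this is why I enlarge $E=\IQ_p(\alpha)$ to a maximal subfield $E'\subset D$, for then the needed uniqueness and rigidity statements live in the category of formal $\sO_{E'}$-modules and are part of Lubin--Tate theory, with no appeal to Skolem--Noether (which would only lift $\alpha$ up to conjugacy). As an alternative one could run the argument through contravariant Dieudonné theory and Grothendieck--Messing: lifts of $\shG$ over $V/p^nV$ correspond to lifts of the Hodge filtration in $\ID(\shG)\otimes k$ to a $V/p^nV$-direct summand, and $\alpha$ lifts precisely when that summand is $\alpha$-stable, so one must exhibit such a summand over a totally ramified extension $V$ of $W$ generated by a root $\lambda$ of the reduced characteristic polynomial of $\alpha$ (which has degree $h$, giving ramification $\le h$); but verifying that the summand reduces to the correct subspace uses the same structural input, namely that $\bar\alpha$ commutes with the nilpotent Frobenius on $\ID(\shG)\otimes k$ and hence lies in $k[\bar F]$.
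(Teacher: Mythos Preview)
Your proof is correct and follows essentially the same strategy as the paper: view $\shG$ as a formal $\sO$-module for a local ring $\sO$ containing $\alpha$, then lift it as a formal $\sO$-module so that $\alpha$ comes along automatically. The only difference is a minor technical choice. The paper works directly with $M=\IQ_p(\alpha)$ and its ring of integers $\sO_M$; as an $\sO_M$-module $\shG$ has height $h/[M:\IQ_p]$, and the paper picks the lift given by setting all coordinates of the universal $\sO_M$-module deformation space $\widehat{\sO}_M^{\mathrm{ur}}[[t_1,\dots,t_{h/[M:\IQ_p]-1}]]$ to zero, with $V=\widehat{\sO}_M^{\mathrm{ur}}$. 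You instead enlarge $\IQ_p(\alpha)$ to a maximal subfield $E'\subset D$ so that the $\sO_{E'}$-height drops to $1$, which lets you invoke Lubin--Tate rigidity rather than the general deformation theory of formal $\sO$-modules. Your route is a touch longer but uses only the classical Lubin--Tate statement; the paper's route is more direct and, incidentally, gives the sharper (though unneeded) bound $e(V/W)\le [\IQ_p(\alpha):\IQ_p]$ rather than $\le h$.
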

\begin{proof}
View $\shG$ as a $\IZ_p$-module (for this terminology, see \cite[1.1]{Wewers}). Set $M = \IQ_p(\alpha)$ and let $\sO_M$ be its ring of integers. Let $V:= \what{\sO}_M^\ur$ be the completion of the maximal unramified extension of $\sO_M$. Let $e := [M : \IQ_p]$. Then as an $\sO_M$-module $\shG$ has height $h/e$. The universal deformation space of an $\sO_M$-module of height $h/e$ is isomorphic to $\what{\sO}_M^\ur[[t_1, \cdots, t_{h/e - 1}]]$. Hence we may obtain a lift $\shG_V$ of $\sO_M$-module $\shG$ to $\what{\sO}_M^\ur$ simply by setting $t_1 = \cdots = t_{h/e - 1} = 0$. Clearly $\shG_V$ carries an action of $\alpha$ and its special fiber is isomorphic to $\shG$ by construction. 
\end{proof}

Now we can prove a lifting lemma which we shall use in Section 5.
\begin{lemma}
\label{workhorce}
Let $X$ and $X'$ be two K3 surfaces of finite height over $k = \bar{\IF}_p$. For any isometry of F-isocrystals
$$ \phi : H^2_\cris(X/W) \tensor K_0 \stackrel{\sim}{\to} H^2_\cris(X'/W) \tensor K_0$$
we can find a finite flat extension $V$ of $W$ and lifts $X_V, X_V'$ of $X, X$ over $V$ with the following properties:
\begin{enumerate}[label=(\alph*)]
    \item The natural maps $\Pic(X_V) \to \Pic(X)$ and $\Pic(X'_V) \to \Pic(X')$ are isomorphisms.
    \item The map $H^2_\dR(X_V/V) \tensor K \stackrel{\sim}{\to} H^2_\dR(X'_V/W) \tensor K$ induced by $\phi$ via the Berthelot-Ogus isomorphism preserves the Hodge filtrations, where $K = V[1/p]$. 
\end{enumerate}
When $X = X'$, $X_V$ and $X_V'$ can be taken to be the same lifting. 
\end{lemma}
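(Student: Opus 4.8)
The plan is to reduce the lemma to lifting the formal Brauer groups together with a single quasi-isogeny between them, and then to feed the output into Proposition~\ref{NyOMain}; the only inputs used beyond that are Lemma~\ref{LiftFG}, the functoriality of the contravariant Dieudonn\'e crystal and of the Hodge filtration of a $p$-divisible group, and the classification of one-dimensional formal groups over $\bar{\IF}_p$ by height.

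\emph{Step 1: reduce what must be checked.} Since $\phi$ is an isomorphism of $F$-isocrystals it preserves the Newton (slope) decomposition, so by the identification of the slope pieces of $H^2_\cris$ recalled above it restricts to isomorphisms of the three graded summands; as the slope-$>1$ summands $\ID(\BR_X)(-1)[1/p]$ and $\ID(\BR_{X'})(-1)[1/p]$ are isoclinic of rank equal to the heights of $X$ and $X'$, those heights coincide, say both equal to $h$, and after untwisting $\phi$ induces an isomorphism of $F$-isocrystals $\phi' : \ID(\BR_X)[1/p] \xrightarrow{\sim} \ID(\BR_{X'})[1/p]$. I claim it suffices to produce a finite flat $V/W$ together with lifts $\shG_V$ of $\BR_X$ and $\shG'_V$ of $\BR_{X'}$ over $V$ with $\phi'(\Fil_{\shG_V}) = \Fil_{\shG'_V}$. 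Granting this, apply Proposition~\ref{NyOMain} to $(X,\shG_V)$ and to $(X',\shG'_V)$ to obtain K3 surfaces $X_V$, $X'_V$ over $V$ lifting $X$, $X'$: part (b) of that proposition gives (a), and by part (c) the Berthelot--Ogus isomorphisms carry $\Fil^\bullet_\Hdg$ on $H^2_\dR(X_V/V)$, resp.\ $H^2_\dR(X'_V/V)$, to $\Fil^\bullet_{\shG_V}$, resp.\ $\Fil^\bullet_{\shG'_V}$. Since $\Fil^2_{\shG_V} = \Fil_{\shG_V}(-1)$ sits inside the slope-$>1$ summand, the hypothesis says $\phi(\Fil^2_{\shG_V}) = \Fil^2_{\shG'_V}$; as $\phi$ is an isometry this forces $\phi(\Fil^1_{\shG_V}) = \phi\bigl((\Fil^2_{\shG_V})^\perp\bigr) = (\Fil^2_{\shG'_V})^\perp = \Fil^1_{\shG'_V}$, and $\phi(\Fil^0) = \Fil^0$ is automatic; this is exactly (b).

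\emph{Step 2: produce $V,\shG_V,\shG'_V$.} By (contravariant) Dieudonn\'e theory over the perfect field $k$, $\phi'$ is the crystalline realization of a quasi-isogeny $\lambda : \BR_{X'} \isog \BR_X$ of one-dimensional formal groups. Since $k=\bar{\IF}_p$ is algebraically closed and $\BR_X$, $\BR_{X'}$ are one-dimensional of the common height $h$, they are isomorphic; fix $\iota : \BR_{X'} \xrightarrow{\sim} \BR_X$. Then $\lambda\circ\iota^{-1}$ is a self-quasi-isogeny of $\BR_X$, so $\lambda\circ\iota^{-1} = p^{-n}\alpha$ for an isogeny $\alpha : \BR_X \to \BR_X$ and some $n\ge 0$. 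Lemma~\ref{LiftFG} applied to $(\BR_X,\alpha)$ yields a finite flat $V/W$ (of ramification index $\le h$), a lift $\shG_V$ of $\BR_X$, and a lift $\alpha_V : \shG_V\to\shG_V$ of $\alpha$. Let $\shG'_V$ be $\shG_V$ with its special fibre re-identified with $\BR_{X'}$ through $\iota^{-1}$; this is a lift of $\BR_{X'}$, and $p^{-n}\alpha_V$ read through this re-identification is a quasi-isogeny $\wt{\lambda} : \shG'_V \isog \shG_V$ over $V$ reducing modulo $p$ to $\lambda$. By functoriality of the Dieudonn\'e crystal and of the Hodge filtration of a $p$-divisible group, $\wt{\lambda}$ induces an isomorphism $\ID(\shG_V)[1/p] \xrightarrow{\sim} \ID(\shG'_V)[1/p]$ respecting the filtrations $\Fil_{\shG_V}$, $\Fil_{\shG'_V}$, which after $\otimes_W K$ via Berthelot--Ogus agrees with $\phi'$ (because $\wt{\lambda}\otimes k = \lambda$ has crystalline realization $\phi'$). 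Hence $\phi'(\Fil_{\shG_V}) = \Fil_{\shG'_V}$, completing the reduction of Step 1.

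\emph{Last assertion and obstacle.} If $X=X'$ then $\phi'$ is already an automorphism of $\ID(\BR_X)[1/p]$ and $\lambda$ a self-quasi-isogeny of $\BR_X$, so no $\iota$ is needed: apply Lemma~\ref{LiftFG} directly, take $\shG'_V=\shG_V$, and correspondingly $X'_V = X_V$. The only genuinely delicate points are (i) keeping the variance and source/target conventions straight in the chain $\phi'\rightsquigarrow\lambda\rightsquigarrow\alpha$ so that Lemma~\ref{LiftFG} applies and $\wt\lambda$ really does reduce to $\lambda$, and (ii) checking that a single finite extension $V$ serves for both lifts and for $\wt\lambda$ — both handled by the reduction above together with Lemma~\ref{LiftFG}. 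Note we obtain no freedom on the slope-$1$ part $\ID(\sD^*)$, since Proposition~\ref{NyOMain} pins down the canonical lift $\shD_V$ of $\sD$; but none is needed, as $\Fil^2$ is concentrated in the slope-$>1$ summand and $\Fil^1$ is cut out as its orthogonal complement.
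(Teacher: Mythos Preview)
Your proof is correct and follows essentially the same approach as the paper: reduce to lifting the quasi-isogeny between formal Brauer groups via Lemma~\ref{LiftFG}, then apply Proposition~\ref{NyOMain}. The only difference is that you spell out explicitly how to pass from a quasi-isogeny $\BR_{X'}\isog\BR_X$ to a self-isogeny of $\BR_X$ (via a chosen isomorphism $\iota$ and clearing a power of $p$) so that Lemma~\ref{LiftFG} applies literally, whereas the paper simply notes that $\BR_X$ and $\BR_{X'}$ are abstractly isomorphic and then invokes Lemma~\ref{LiftFG} to lift the quasi-isogeny directly.
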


\begin{proof}
Since $\phi$ respects the Frobenius action, under the identification (\ref{decomp}) $\phi$ restricts to an isomorphism of isocrystals $\ID(\BR_X)(-1) \tensor K_0 \stackrel{\sim}{\to} \ID(\BR_{X'})(-1) \tensor K_0$. Therefore, $\what{\Br}_{X'}$ and $\what{\Br}_{X}$ must have the same height, and hence they are abstractly isomorphic. Let $\phi_{\mathrm{Br}}$ be the quasi-isogeny $\Br_{X'} \to \Br_X$ induced by $\phi$. 

By Lem.~\ref{LiftFG}, we can find a finite flat extension $V$ of $W$ and lifts $\shG_V$ and $\shG_V'$ of $\what{\Br}_{X}$ and $\what{\Br}_{X'}$ such that $\phi_{\mathrm{Br}} : \what{\Br}_{X'} \to \what{\Br}_{X}$ lifts to an quasi-isogeny $\phi_{\mathrm{Br}, V} : \shG'_V \to \shG_V$. The desired $X_V$ and $X_V'$ can be taken to be lifts given by Prop.~\ref{NyOMain} such that the Berthelot-Ogus isomorphisms $H^2_\dR(X_V/V) \tensor_V K \stackrel{\sim}{\to} \IK(\BR_X) \tensor_{W} K$ and $H^2_\dR(X'_V/V) \tensor_V K \stackrel{\sim}{\to} \IK(\BR_{X'}) \tensor_{W} K$ send the Hodge filtrations on $H^2_\dR(X_V/V) \tensor K$ and $H^2_\dR(X'_V/V) \tensor K$ to $\Fil^\bullet_{\shG_V}$ and $\Fil^\bullet_{\shG_V'}$ respectively. 

When $X = X'$, we identify $\BR_X$ and $\BR_{X'}$ and lift $\phi$ to a quasi-isogeny $\shG_V \to \shG_V$. Then of course $X_V$ and $X_V'$ can be taken to be the same lifting. 
\end{proof}

\begin{remark}
Not surprisingly, if $X$ and $X'$ are ordinary, we can always take $V$ to be $W$ and $X_V, X_V'$ to be the canonical liftings. The reason is that the canonical lifting of an ordinary K3 surface induces a filtration which coincides with the slope filtration (\cite[Lem.~1.9]{Yu}). 
\end{remark}

\section{Proofs of Theorems}
\paragraph{Convention} The universal family over $\wt{\Mod}_{2d, \fK^\ad_p, \IZ_{(p)}}$ is denoted by $(\sX, \bxi)$. For any scheme $T$, we denote the image of a point $t \in \wt{\Mod}_{2d, \fK^\ad_p, \IZ_{(p)}}(T)$ in $\shS_{\fK^\ad_p}(G^\ad, \Ohm)(T)$ still by $t$. Similarly, we will also make use of the isomorphisms in Prop.~\ref{compareCoh} with the symbol for period morphism suppressed. Denote the fiber of $(\sX, \xi)$ over $t$ by $(\sX_t, \bxi_t)$. 

We first give a simple lemma on correspondences:

\begin{lemma}
\label{corr}
Let $Y, Y'$ be two algebraic surfaces over a field. Every morphism $\psi: \NS(Y)_\IQ \to \NS(Y')_\IQ$ is induced by a correspondence on $Y \times Y'$.
\end{lemma}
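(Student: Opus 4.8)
The plan is to realize $\psi$ by an explicit algebraic cycle built out of ``product divisors'', using the non-degeneracy of the intersection pairing on $\NS(Y)_\IQ$ to produce a dual basis. First I would record that $\NS(Y)_\IQ$ is a finite-dimensional $\IQ$-vector space on which the intersection pairing $\langle -,-\rangle$ is non-degenerate: a divisor class pairing to zero with every divisor class is numerically trivial, hence torsion in $\NS(Y)$ (numerical trivial divisor classes form $\mathrm{Pic}^\tau/\mathrm{Pic}^0$, which is finite, so $\NS(Y)_\IQ\cong\mathrm{Num}(Y)_\IQ$; over a non-closed field one then restricts the Galois-invariant non-degenerate pairing on $\NS(Y_{\bar{k}})_\IQ$ to the orthogonal direct summand cut out by the Galois-invariants). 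Consequently there is a $\IQ$-basis $e_1,\dots,e_r$ of $\NS(Y)_\IQ$ together with the dual basis $f_1,\dots,f_r$, characterized by $\langle e_i,f_j\rangle=\delta_{ij}$.

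Next I would choose $\IQ$-divisors $D_1,\dots,D_r$ on $Y$ with $[D_i]=f_i$ in $\NS(Y)_\IQ$ and $\IQ$-divisors $E_1,\dots,E_r$ on $Y'$ with $[E_i]=\psi(e_i)$ in $\NS(Y')_\IQ$; here it is essential that the paper's correspondences carry $\IQ$-coefficients, both so that the dual basis $\{f_i\}$ is available and (over a non-closed field) so that any Brauer-type obstruction to lifting a N\'eron--Severi class to an honest line bundle can be cleared by scaling. Then set
$$ Z \;:=\; \sum_{i=1}^{r}\, \mathrm{pr}_Y^{*}D_i \,\cdot\, \mathrm{pr}_{Y'}^{*}E_i \;\in\; \CH^{2}(Y\times Y')_\IQ , $$
a codimension-two cycle class, i.e. a correspondence from $Y$ to $Y'$.

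Finally I would compute the induced map on N\'eron--Severi groups via the projection formula. For $\alpha\in\NS(Y)_\IQ$, since $\mathrm{pr}_Y$ is flat one has
$$ Z_{*}\alpha \;=\; (\mathrm{pr}_{Y'})_{*}\!\big(Z\cdot \mathrm{pr}_Y^{*}\alpha\big) \;=\; \sum_{i=1}^{r}(\mathrm{pr}_{Y'})_{*}\!\big(\mathrm{pr}_Y^{*}(D_i\cdot\alpha)\cdot \mathrm{pr}_{Y'}^{*}E_i\big) \;=\; \sum_{i=1}^{r}\langle f_i,\alpha\rangle\,E_i , $$
because $D_i\cdot\alpha$ is a zero-cycle on $Y$ whose degree is $\langle f_i,\alpha\rangle$, and pushing its flat pullback to $Y\times Y'$ forward along $\mathrm{pr}_{Y'}$ after intersecting with $\mathrm{pr}_{Y'}^{*}E_i$ simply multiplies $E_i$ by that degree. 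Writing $\alpha=\sum_j\langle f_j,\alpha\rangle\,e_j$ gives $Z_{*}\alpha=\sum_i\langle f_i,\alpha\rangle\,\psi(e_i)=\psi\big(\sum_i\langle f_i,\alpha\rangle\,e_i\big)=\psi(\alpha)$, so $Z$ induces $\psi$. There is no genuinely hard step here; the only points requiring care are the non-degeneracy of the intersection form on $\NS(Y)_\IQ$ (so that the dual basis $\{f_i\}$ exists) and, if one insists on working over a non-algebraically-closed field, the representability of rational N\'eron--Severi classes by $\IQ$-divisors defined over the base field.
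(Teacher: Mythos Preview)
Your proposal is correct and follows essentially the same approach as the paper: both use the non-degeneracy of the intersection pairing on $\NS(Y)_\IQ$ to obtain a dual basis, then build the correspondence as a sum of products $D_i \times E_i$ (equivalently $\mathrm{pr}_Y^*D_i \cdot \mathrm{pr}_{Y'}^*E_i$) of divisors representing the dual basis on $Y$ and the images $\psi(e_i)$ on $Y'$. The paper simply writes down the cycle $\sum_{i,j} a_{ij} E_i^* \times F_j$ without the projection-formula verification or the care over non-closed fields that you include, but the construction is the same.
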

\begin{proof}
Recall that by the Hodge index theorem, the intersection pairing on $\NS(Y)$ is non-degenerate. Let $e_1, \cdots, e_n$ be a basis for $\NS(Y)_\IQ$ and let $e_1^*, \cdots, e_{n}^* \in \NS(Y)_\IQ$ be a dual basis, i.e., under the intersection pairing $e_i^* \cdot e_i = 1$ and $e_i^* \cdot e_j = 0$ for $i \neq j$. Let $f_1, \cdots, f_{n'}$ be a basis for $\NS(Y')_\IQ$. For each $i$, let $E_i, E_i^*, F_i$ be formal $\IQ$-linear combinations of curves on $Y$ or $Y'$ representing the classes $e_i, e_i^*, f_i$. If $\psi : \NS(Y)_\IQ \to \NS(Y')_\IQ$ sends $e_i$ to $\sum_{j = 1}^{n'} a_{ij} f_j$ for $a_{ij} \in \IQ$, then the desired correspondence is given by $\sum_{i = 1}^n \sum_{j = 1}^{n'} a_{ij} E_i^* \times F_j$ on $Y \times Y'$. 
\end{proof}

\label{Proofs}
\subsection{Proof of Proposition~\ref{mainTh}} 
Proposition~\ref{mainTh} is a direct consequence of the following more precise statement:
\begin{proposition}
\label{main}
Assume $p \nmid d$ and $p \ge 13$. Let $t, t' \in \wt{\Mod}_{2d, \fK^\ad_p, \IZ_{(p)}}(\bar{\IF}_p)$ be two points. Let $W := W(\bar{\IF}_p)$ and $K_0 := W[1/p]$. Suppose the images of $t, t'$ in $\shS_{\fK^\ad_p}(G^\ad, \Ohm)(\bar{\IF}_p)$ lift to points $s, s'\in \shS_{\fK_p}(G, \Ohm)(\bar{\IF}_p)$ respectively. For each CSpin-isogeny $\psi : \sA_s \to \sA_{s'}$, there exists an isogeny $\phi : \sX_t \isog \sX_{t'}$ which sends $\mathrm{ch}_*(\bxi_{t'})$ to $\mathrm{ch}_*(\bxi_{t})$ for $* = \cris, \et$ such that the following diagrams commute
\begin{center}
\begin{tikzcd}
\bL_{\cris, t'} \tensor K_0 \arrow{r}{\text{conj. by }\psi^*_\cris} \arrow{d}{} & \bL_{\cris, t} \tensor_W K_0 \arrow{d}{} \\
P^2_\cris(\sX_{t'}/W) \tensor K_0(1) \arrow{r}{\phi^*_\cris} & P^2_\cris(\sX_t/W) \tensor K_0(1).
\end{tikzcd}
\begin{tikzcd}
\bL_{\IA_f^p, t'} \arrow{r}{\text{conj. by }\psi^*_\et} \arrow{d}{} & \bL_{\IA^p_f, t} \arrow{d}{} \\
P^2_\et(\sX_{t'}, \IA_f^p)(1) \arrow{r}{\phi^*_\et} & P^2_\et(\sX_t, \IA_f^p)(1)
\end{tikzcd}
\end{center}
\end{proposition}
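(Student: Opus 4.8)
The plan is to split the argument into the finite-height and supersingular cases, treating the former by lifting to characteristic zero and invoking Buskin's theorem; recall from Step~5 of the proof of Proposition~\ref{CMsp} that $\sA_s$ is non-supersingular exactly when $\sX_t$ has finite height. I would begin with a reduction. Since $\psi$ exists, $s'$ lies in the isogeny class of $s$, so by Proposition~\ref{1.4.15} we may write $s' = [\iota_p(g_p)]\cdot g^p$; set $s'' := [\iota_p(g_p)]$, with image $t''$ in $\wt{\Mod}_{2d,\fK^\ad_p,\IZ_{(p)}}(\bar{\IF}_p)$. The point $t'$ equals $t''\cdot g^p$, so $\sX_{t'}$ and $\sX_{t''}$ are the \emph{same} quasi-polarized surface (the prime-to-$p$ Hecke operator only moves the level structure); precomposing $\psi$ with the prime-to-$p$ quasi-isogeny $\sA_{s'}\to\sA_{s''}$ and using the Hecke-equivariance of $\alpha_\ell$ and $\alpha_\cris$, we are reduced to the case $g^p=1$, i.e. to producing, for a CSpin-isogeny $\psi\colon \sA_s\to\sA_{s''}$, a correspondence $\phi\colon\sX_t\isog\sX_{t''}$ that realizes the isometries of $\bL_\cris[1/p]$ and $\bL_{\IA_f^p}$ induced by $\psi$ and sends $\ch_*(\bxi_{t''})$ to $\ch_*(\bxi_t)$. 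Throughout one translates freely between the $\bL_*$ and the $P^2_*(\sX_{(\cdot)})$ via Propositions~\ref{compareCoh} and~\ref{submotive}.

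\emph{Finite-height case.} Extend the crystalline realization of $\psi$ to an isometry of $F$-isocrystals $H^2_\cris(\sX_{t''}/W)[1/p]\to H^2_\cris(\sX_t/W)[1/p]$ by decreeing $\ch_\cris(\bxi_{t''})\mapsto\ch_\cris(\bxi_t)$, and feed it into Lemma~\ref{workhorce}: one obtains a finite extension $V/W$ and lifts $\sX_{t,V},\sX_{t'',V}$ over $V$ for which $\Pic$ lifts isomorphically (so the quasi-polarizations lift, cf.\ Remark~\ref{ampleliftstoample}) and the resulting de Rham map preserves Hodge filtrations. Since $\rho$ is étale and $\shS_{\fK_p}(G,\Ohm)\to\shS_{\fK^\ad_p}(G^\ad,\Ohm)$ is pro-étale, these K3 lifts determine compatible lifts $s_V,s''_V$ of $s,s''$ over $V$ (after possibly enlarging $V$), and the Hodge filtrations on $H^1_\dR(\sA_{s_V})$, $H^1_\dR(\sA_{s''_V})$ are the Kuga--Satake filtrations $\ker(\Fil^1\bL_\dR)$ attached to the K3 ones. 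As the de Rham map above is the Berthelot--Ogus transport of the crystalline realization of $\psi$, it respects these filtrations, so by Grothendieck--Messing $\psi$ lifts to a quasi-isogeny $\psi_V\colon\sA_{s_V}\to\sA_{s''_V}$ which still respects the $\IZ/2\IZ$-grading, the $\Cl(L)$-action and $\bpi$ — i.e. a CSpin-isogeny over $V$. Fixing $\bar{K}\iso\IC$, conjugation by $(\psi_V\otimes\IC)^*$ carries $\bL_{B,s''_\IC}$ isometrically onto $\bL_{B,s_\IC}$ as Hodge structures, hence via $\alpha_B$ yields a Hodge isometry $P^2(\sX_{t''_\IC},\IQ)\iso P^2(\sX_{t_\IC},\IQ)$; extending it by $\ch(\bxi_{t''_\IC})\mapsto\ch(\bxi_{t_\IC})$ (both degrees being $2d$) to a Hodge isometry of the full $H^2$'s, Buskin's theorem \cite[Thm~1.1]{Buskin} produces a correspondence realizing it over $\bar{K}$; this spreads out over a finite extension of $V$ and specializes to the sought $\phi\colon\sX_t\isog\sX_{t''}$, whose crystalline realization matches $\psi$'s by compatibility of cycle classes with Berthelot--Ogus, whose prime-to-$p$ étale realizations match by smooth and proper base change, and with $\phi^*\ch_*(\bxi_{t''})=\ch_*(\bxi_t)$ by specialization.

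\emph{Supersingular case.} Now $\sX_t$ and $\sX_{t''}$ are supersingular, so all of their degree-$2$ cohomology is algebraic: $H^2_\et(\sX_t,\IQ_\ell)=\NS(\sX_t)\otimes\IQ_\ell$ for $\ell\neq p$, and $H^2_\cris(\sX_t/W)[1/p]$ is, as an $F$-isocrystal, $\NS(\sX_t)\otimes K_0$ with Frobenius $p\sigma$, likewise for $\sX_{t''}$. A CSpin-isogeny conjugates special endomorphisms to special endomorphisms, so by Proposition~\ref{submotive} $\psi$ induces an isometry $\langle\bxi_{t''}\rangle^\perp_\IQ\iso\langle\bxi_t\rangle^\perp_\IQ$ of the corresponding sublattices of $\NS(\sX_{t''})_\IQ$, $\NS(\sX_t)_\IQ$; extending it by $\bxi_{t''}\mapsto\bxi_t$ to an isometry $\NS(\sX_{t''})_\IQ\iso\NS(\sX_t)_\IQ$ and applying Lemma~\ref{corr} gives a correspondence $\phi$. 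Its crystalline and $\ell$-adic realizations agree with $\psi$'s, and $\phi^*\ch_*(\bxi_{t''})=\ch_*(\bxi_t)$, because in the supersingular case $P^2_\cris[1/p]$, $P^2_\et(\IA_f^p)$ and the special-endomorphism lattice are all read off $\NS_\IQ$ through the identifications just made.

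\emph{The hard part.} The crux is the finite-height case: one must choose the characteristic-zero lifts of $\sX_t$ and $\sX_{t''}$ so that the given CSpin-isogeny itself lifts — marrying the Nygaard--Ogus lifting lemma, which only governs the crystalline Hodge filtration of the K3 surface, with the Kuga--Satake description of the spinor Shimura variety so that Grothendieck--Messing applies — and then one must descend the correspondence supplied by Buskin's theorem to $\bar{\IF}_p$ and check that, after specialization, it realizes $\psi$ on the crystalline, the prime-to-$p$ étale, and the Chern-class data all at once. The supersingular case is comparatively soft, resting only on the Tate conjecture for supersingular K3 surfaces and on Lemma~\ref{corr}.
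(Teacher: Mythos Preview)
Your proof follows the paper's approach closely: the same finite-height/supersingular split, the same use of Lemma~\ref{workhorce} to choose lifts making the Hodge filtrations match, lifting $\psi$ to characteristic zero (the paper cites \cite[Thm~3.15]{BO} rather than Grothendieck--Messing, but these coincide here), invoking Buskin, and specializing; and the same special-endomorphism argument via Proposition~\ref{submotive} and Lemma~\ref{corr} in the supersingular case.

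The one point to flag is your opening reduction. The assertion ``since $\psi$ exists, $s'$ lies in the isogeny class of $s$'' is not proved in the paper and is not obvious: Kisin's isogeny class is defined as the image of the specific map $\iota_s$, and showing that an arbitrary CSpin-isogeny forces $s'$ into that image would require an argument (essentially that the induced $g_p$ lands in $X_p$). More to the point, the paper does \emph{not} make this reduction at all --- it works directly with the given $\psi$, and nothing in either case of the argument uses that $g^p=1$. Your subsequent treatment of the two cases already works verbatim for arbitrary $\psi$, so you should simply drop the reduction paragraph.
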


\begin{proof}
We treat the supersingular case and the finite height case separately. \\
\textbf{Supersingular case: }The map $\psi$, as a CSpin-isogeny, clearly preserves special endomorphisms, so it induces an isometry $L(\sA_{s'})_\IQ \stackrel{\sim}{\to} L(\sA_{s})_\IQ$. By Thm~\ref{submotive}, we obtain an isometry $i^\perp: \< \bxi_{t'} \>^\perp \stackrel{\sim}{\to} \< \bxi_{t} \>^\perp$. Here the orthogonal complements $\< \bxi_{t} \>^\perp$ and $\< \bxi_{t'} \>^\perp$ are taken inside $\NS(\sX_t)_\IQ$ and $\NS(\sX_{t'})$ respectively. We may extend $i^\perp$ to an isometry $i : \NS(\sX_{t'})_\IQ \stackrel{\sim}{\to} \NS(\sX_{t})_\IQ$ by sending $\bxi_{t'}$ to $\bxi_{t}$. By Lemma~\ref{corr}, $i$ is given by a correspondence $\phi : \sX_t \isog \sX_{t'}$. On the other hand, $\< \bxi_t \>^\perp$ and $\< \bxi_{t'} \>^\perp$ span all of $P^2_\et(\sX_t, \IA^p_f)$ and $P^2_\et(\sX_{t'}, \IA^p_f)$, the induced map $\phi_\et^* : P^2_\et(\sX_{t'}, \IA^p_f) \to P^2_\et(\sX_t, \IA^p_f)$ is completely determined and has to agree with the map induced by $\psi$. The argument for crystalline cohomology is the same.\\\\
\textbf{Finite height case:} By Lem.~\ref{workhorce} and Rmk~\ref{ampleliftstoample}, for some finite flat extension $V$ of $W$ and $K:= V[1/p]$, we may choose quasi-polarized K3 surfaces $(X_V, \xi_V)$ and $(X_V', \xi_V')$ over $V$ which lift $(\sX_t, \bxi_{t})$ and $(\sX_{t'}, \bxi_{t'})$ such that 
\begin{equation}
\label{conjcris}
\text{conj. by } \psi^*_\cris : P^2_\cris(\sX_{t'}/W) \tensor_W K \to P^2_\cris(\sX_t/W) \tensor_W K
\end{equation} 
preserves the Hodge filtrations induced by $(X_V, \xi_V)$ and $(X_V', \xi_V')$. Note that the Berthelot-Ogus isomorphism $H^2_\cris(\sX_t/W) \tensor_W K \iso H^2_\dR(X_V/V) \tensor_V K$ restricts to an isomorphism $$P^2_\cris(\sX_t/W) \tensor_W K \iso P^2_\dR(X_V/V) \tensor_V K$$ and the same holds for $X_V'$ and $\sX_{t'}$. We can then choose lifts $t_V, t'_V : \Spec V \to \wt{\Mod}_{2d, \fK_p^\ad, \IZ_{(p)}}$ of $t, t'$ such that $(\sX_{t_V}, \bxi_{t_V})$ and $(\sX_{t_V'}, \bxi_{t_V'})$ can be respectively identified with $(X_V, \xi_V)$ and $(X_V', \xi_V')$.

Since the map $\shS_{\fK_p}(G, \Ohm) \to \shS_{\fK^\ad_p}(G^\ad, \Ohm)$ is pro-\'etale, we can lift the $V$-valued points $t_V, t'_V$ to $s_V, s'_V$ on $\shS(G, \Ohm)$ such that $s, s'$ are special points of $s_V, s_V'$. Note that under the inclusion $P^2_\dR(\sX_{t_V}(1)) \subset \End( H^1_\dR(\sA_{s_V}))$, the Hodge filtration on $H^1_\dR(\sA_{s_V})$ is given by 
$$ \F^1 H^1_\dR(\sA_{s_V}) = \ker \F^1 P^2_\dR(\sX_{t_V})(1). $$

Therefore, $\psi^*_\cris \tensor K : H^1_\cris(\sA_{s'}/W) \tensor_W K  \stackrel{\sim}{\to} H^1_\cris( \sA_{s}/W) \tensor_W K$ 
preserves the Hodge filtrations induced by $\sA_{s_V}$ and $\sA_{s'_V}$. By \cite[Thm~3.15]{BO}, $\psi$ lifts to a quasi-isogeny $ \psi_V : \sA_{s_V} \to \sA_{s'_V}$. Choose an isomorphism $\bar{K} \iso \IC$, which induces an embedding $V \subset \IC$. We claim that $\psi_V \tensor \IC$ respects the $\IZ/2\IZ$-grading, $\Cl(L)$-action and sends $\bpi_{B, s_V \tensor \IC}$ to $\bpi_{B, s'_V \tensor \IC}$. In particular, $\psi_V \tensor \IC$ is a CSpin-isogeny. Indeed, we can check this by looking at the maps induced by $\psi_V \tensor \IC$ on $\ell$-adic cohomology for any $\ell \neq p$, so that the conclusion follows from the smooth and proper base change theorem and the assumption that $\psi$ is a CSpin-isogeny. Now $\psi_V \tensor \IC$ induces by conjugation a Hodge isometry $$P^2(\sX_{t_V' \tensor \IC}, \IQ) \stackrel{\sim}{\to} P^2(\sX_{t_V \tensor \IC}, \IQ).$$
Extend the above map to full Hodge structures by sending the class of $\bxi_{t'}$ to that of $\bxi_{t}$. Buskin's result tells us that this Hodge isometry is given by an isogeny $\sX_{t_V \tensor \IC} \isog \sX_{t'_V \tensor \IC}$. Now we can complete the proof by specializing this correspondence to $\phi : \sX_t \isog \sX_{t'}$. By the compatibility between specialization and cycle class maps, the diagrams in the statement of the proposition commute. 
\end{proof}

\subsection{Proof of Theorem~\ref{mainCor}}
\begin{lemma}
\label{conjugate}
Let $k$ be a field, $M$ be a vector space over $k$ and $G \subset GL(M)$ be a closed reductive subgroup. Let $\mu, \mu' : \IG_m \to G$ be two cocharacters. If $\mu$ and $\mu'$ induce the same filtration on $V$, then they are conjugate under $G(k)$. 
\end{lemma}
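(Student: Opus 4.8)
The plan is to deduce the claim from the dynamic description of parabolic and Levi subgroups of a reductive group, which is available over an arbitrary field. Recall that a cocharacter $\nu$ of $G$ determines the $\IZ$-grading of $M$ by weight spaces for $\IG_m$ acting through $\nu$, and hence the descending filtration $\F^p_\nu M = \bigoplus_{i \ge p} M_i$; a short weight-space computation shows that for $g \in \GL(M)$ the limit $\lim_{t\to 0} \nu(t)\, g\, \nu(t)^{-1}$ exists if and only if $g$ stabilizes $\F^\bullet_\nu$. Because $G$ is closed in $\GL(M)$, the analogous limit taken inside $G$ exists under exactly the same condition, so the attractor parabolic $P_G(\nu)$ equals $G \cap \mathrm{Stab}_{\GL(M)}(\F^\bullet_\nu)$. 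Since $\mu$ and $\mu'$ induce the same filtration $\F^\bullet$ on $M$, this gives $P_G(\mu) = P_G(\mu') =: P$; set $U := R_u(P)$.

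Next I would arrange that $\mu$ and $\mu'$ have the same centralizer. By the dynamic description, $Z_G(\mu)$ and $Z_G(\mu')$ are Levi subgroups of $P$, and the Levi $k$-subgroups of a parabolic over a field $k$ form a single orbit under $R_u(P)(k)$; hence there is $u \in U(k)$ with $Z_G(\mu') = u\, Z_G(\mu)\, u^{-1}$. Replacing $\mu'$ by $u^{-1} \mu' u$ — which still induces $\F^\bullet$, since $u \in \mathrm{Stab}(\F^\bullet)$ — I may assume $L := Z_G(\mu) = Z_G(\mu')$. Now each of $\mu, \mu'$ factors through, and is central in, the connected reductive group $L$.

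Finally I would show $\mu = \mu'$ as cocharacters of $L$. Since $L \subseteq P$ stabilizes $\F^\bullet$, it acts on $\gr^\bullet M = \bigoplus_p \F^p/\F^{p+1}$, and the kernel of $L \to \GL(\gr^\bullet M)$ is a normal unipotent subgroup of the connected reductive group $L$, hence trivial; so this representation is faithful. On the other hand $\mu(t)$ and $\mu'(t)$ both act on each $\gr^p M$ as multiplication by $t^p$, which depends only on $\F^\bullet$, so they have equal image in $\GL(\gr^\bullet M)$ and therefore coincide in $L$. Undoing the substitution, the original $\mu'$ equals $u \mu u^{-1}$ with $u \in U(k) \subseteq G(k)$.

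I do not anticipate a substantive obstacle here: once the standard structure-theoretic facts are granted — Levi decompositions of parabolics, $R_u(P)(k)$-conjugacy of Levi $k$-subgroups, and the identification of $Z_G(\nu)$ with a Levi of $P_G(\nu)$ (see e.g. SGA3 Exp.~XXVI, or Conrad's notes on reductive group schemes) — the argument is formal. The one point deserving attention is that these inputs must be invoked over the given field $k$, not merely over $\bar{k}$, so that the conjugating element $u$ lies in $G(k)$; this is precisely why I would route the argument through the dynamic/Levi picture rather than through a conjugacy statement valid only after base change.
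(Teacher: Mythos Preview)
Your proposal is correct and follows essentially the same route as the paper's proof: both identify the common parabolic $P$ via the shared filtration, use the $U(k)$-conjugacy of Levi subgroups to reduce to the case $Z_G(\mu)=Z_G(\mu')=:L$, and then conclude equality by comparing the induced actions on $\gr^\bullet M$. The only cosmetic difference is in the last step: the paper phrases it via the isomorphism $L \stackrel{\sim}{\to} P/U$, while you argue directly that $L \to \GL(\gr^\bullet M)$ is faithful; these are equivalent since the kernel of the latter is exactly $L \cap U$.
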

\begin{proof}
This can be extracted from the proof of \cite[Lem.~1.1.9]{Modp}. We present the argument for readers' convenience. Let $\Fil^\bullet M$ be the filtration on $M$ induced by $\mu, \mu'$ and let $P \subset G$ denote the parabolic subgroup which respects $\Fil^\bullet$. Let $U \subset P$ be the subgroup which acts trivially on associated graded vector space $\mathrm{gr}^\bullet M$, so that $U$ is the unipotent radical of $P$. Since $\mu, \mu'$ induce the same filtration, they induce the same grading on $\mathrm{gr}^\bullet M$, which means that their compositions $\IG_m \stackrel{\mu, \mu'}{\to} P \to P/U$ are equal. Let $L, L'$ be the centralizers of $\mu, \mu'$ in $G$. Then $L, L'$ are Levi subgroups of $P$, so $L = u L' u^{-1}$ for some $u \in U(k)$. The cocharacter $\mu'' := u \mu' u^{-1} : \IG_m \to L$ and $\mu$ induce the same cocharacter under the projection $L \stackrel{\sim}{\to} P/U$. Hence $\mu'' = \mu$. 
\end{proof}

Now let $k$ be a perfect field of characteristic $p$ and let $(D, \varphi, \<-, -\>)$ be a K3 crystal over $k$. Recall that the Frobenius action $\varphi$ gives an abstract Hodge filtration $\Fil^\bullet_\varphi$ on $D/pD$: 
$$ \Fil^i_\varphi(D/pD) := \varphi^{-1}(p^i D) \mod p $$

By Mazur-Ogus inequality (cf. \cite[Thm~8.26]{BO}), if $(D, \varphi)$ is given by $H^2_\cris(X/W(k))$ for some K3 surface $X$ over $k$, then $\Fil^\bullet_\varphi$ agrees with the Hodge filtration on $$H^2_\cris(X/W(k))/ p  H^2_\cris(X/W(k)) \iso H^2_\dR(X/k). $$

Let $(D, \varphi)$ be a K3 crystal. Assume that $k$ is algebraically closed. Set $K_0 := W(k)[1/p]$. Suppose that the quadratic lattice $D$ is given by $N \tensor_{\IZ_p} W(k)$ for some self-dual quadratic lattice $N$ over $\IZ_p$ and $\varphi$ is given by $p \bar{b} \sigma$ for some $\bar{b} \in \SO(N \tensor_{\IZ_p} K_0)$. For the following two lemmas, we write $\SO$ for the group scheme $\SO(N)$ over $\IZ_p$. The lemma below is an analogue of \cite[Lem.~1.1.12]{Modp}.
\begin{lemma}
\label{prodcocharacter}
\begin{enumerate}[label=(\alph*)]
    \item There exists a cocharacter $\mu : \IG_{m} \to \SO_{W(k)}$ such that $$\bar{b} \in \SO(W(k)) \mu(p) \SO(W(k))$$ and $\sigma^{-1}(\mu)$ gives the filtration $\Fil^\bullet_\varphi(1)$. 
    \item $\Fil^2_\varphi$ is an isotropic direct summand of $D / p D$ of dimension $1$. 
\end{enumerate}
\end{lemma}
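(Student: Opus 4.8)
The plan is to run the proof of \cite[Lem.~1.1.12]{Modp} in the orthogonal setting. Write $\SO = \SO(N)$ and $K_0 = W(k)[1/p]$; since $N$ is self-dual and $p$ is odd, $\SO_{W(k)}$ is a split reductive group scheme, so the Cartan decomposition (as used already in Lemma~\ref{surjatp}) gives $g_1, g_2 \in \SO(W(k))$ and a dominant cocharacter $\lambda\colon \IG_m \to \SO_{W(k)}$ with $\bar{b} = g_1\,\lambda(p)\,g_2$. The first step is to pin down $\lambda$. From $\varphi = p\bar{b}\sigma$ we have $\bar{b}(D) = p^{-1}\varphi(D)$, so the two K3-crystal conditions $\varphi(D)\subseteq D$ and $p^2 D \subseteq \varphi(D)$ are together equivalent to $pD \subseteq \bar{b}(D) \subseteq p^{-1}D$, hence (as $g_1, g_2$ preserve $D$) to $pD \subseteq \lambda(p)(D) \subseteq p^{-1}D$. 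Since $\lambda(p)$ scales the weight-$a$ subspace of $D$ by $p^a$, and the weights of an $\SO$-cocharacter occur in pairs $\{a,-a\}$ (plus a possible $0$), this forces every weight into $\{-1,0,1\}$: if $r$ of them equal $1$ then $r$ equal $-1$ and the rest vanish, so $D = D_1 \oplus D_0 \oplus D_{-1}$ with $D_{\pm1}$ isotropic of rank $r$ and $(D_1)^\perp = D_1 \oplus D_0$. Writing $\varphi\tensor k = \bar g_1\,(\overline{p\lambda(p)})\,\bar g_2\,\bar\sigma$, the rank of $\varphi\tensor k$ equals that of $\overline{p\lambda(p)}$, which annihilates the weight-$1$ and weight-$0$ parts of $D\tensor k$ and is an isomorphism on the weight-$(-1)$ part, hence equals $r$; so the hypothesis $\rank(\varphi\tensor k)=1$ forces $r = 1$, and $D_{\pm 1}$ are isotropic lines.

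Next I would compute $\Fil^\bullet_\varphi$ explicitly. For $x \in D$ one has $\varphi(x) \in p^i D \iff \lambda(p)\,g_2\sigma(x) \in p^{i-1}D \iff g_2\sigma(x) \in \lambda(p^{-1})(p^{i-1}D)\cap D$; decomposing along the weight spaces of $\lambda$ and reducing mod $p$ gives $\Fil^3_\varphi = 0$, $\Fil^2_\varphi = \sigma^{-1}(g_2^{-1}(D_1))\bmod p$, $\Fil^1_\varphi = \sigma^{-1}(g_2^{-1}((D_1)^\perp))\bmod p = (\Fil^2_\varphi)^\perp$, and $\Fil^0_\varphi = D/pD$. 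In particular $\Fil^2_\varphi$ is the reduction of the rank-$1$ isotropic direct summand $\sigma^{-1}(g_2^{-1}(D_1))$ of $D$, hence a $1$-dimensional isotropic direct summand of $D/pD$; this is part (b).

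For part (a) I set $\mu := g_2^{-1}\lambda g_2$, a cocharacter of $\SO_{W(k)}$. Then $\SO(W(k))\,\mu(p)\,\SO(W(k)) = \SO(W(k))\,\lambda(p)\,\SO(W(k)) \ni \bar{b}$, so the double-coset condition is immediate, and it remains to identify the filtration cut out by $\sigma^{-1}(\mu)$. Its weight-$j$ space is $\sigma^{-1}(g_2^{-1}(D_j))$, so the filtration it induces is $\sigma^{-1}(g_2^{-1}(D_1)) \subset \sigma^{-1}(g_2^{-1}((D_1)^\perp)) \subset D$, which after reducing mod $p$ is exactly $\Fil^2_\varphi \subset \Fil^1_\varphi \subset D/pD$, i.e. the Tate twist $\Fil^\bullet_\varphi(1)$. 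Given the Cartan decomposition the whole argument is bookkeeping; the one point that requires attention is this last matching of the $\sigma$-twist with the Tate twist, which is what forces the conjugating factor to be the $g_2$ adjacent to $\sigma$ in $\varphi = p\,g_1\lambda(p)\,g_2\,\sigma$, and forces the relevant twist to be $(1)$ — precisely the one that renders the jumps of $\Fil^\bullet_\varphi$ symmetric about $0$, as they must be to come from an $\SO$-cocharacter.
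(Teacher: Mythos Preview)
Your proof is correct and follows essentially the same approach as the paper: both use the Cartan decomposition $\bar{b} = g_1\lambda(p)g_2$, set $\mu = g_2^{-1}\lambda g_2$, and verify that $\sigma^{-1}(\mu)$ recovers $\Fil^\bullet_\varphi(1)$ by direct computation with the weight spaces. The only cosmetic difference is that the paper names the weight spaces of $\sigma^{-1}(\mu)$ directly as $D_i$, whereas you work with the weight spaces of $\lambda$ and then translate via $\sigma^{-1}g_2^{-1}$; your version has the small advantage of making explicit why it is $g_2$ (the factor adjacent to $\sigma$) that must be used for the conjugation.
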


\begin{proof}
(a) By the Cartan decomposition, there exists a $\SO_{W(k)}$-valued cocharacter $\mu'$ and $h_1, h_2 \in \SO_{W(k)}$ such that $\bar{b} = h_1 \mu'(p) h_2 = h_1 h_2 \mu(p)$ for $\mu = h_2^{-1} \mu' h_2$. We check that $\sigma^{-1}(\mu)$ gives the filtration $\Fil^\bullet_\varphi(1)$. The condition that $p^2 D \subset \varphi(D)$ tells us that the decomposition defined by $\sigma^{-1} (\mu)$ takes the form $D = D_1 \oplus D_0 \oplus D_{-1}$ where $D_i = \{ d\in D : \sigma^{-1} (\mu)(z) \cdot d = z^i d \}$. Now we verify that $D_1 / p D_1 = \Fil^2_\varphi(D/pD)$, or equivalently, $D_1$ and $\varphi^{-1}(p^2 D)$ have the same image modulo $p$. One can easily check by definitions $D_1$ is a sub $W(k)$-module of $\varphi^{-1}(p^2 D)$. Conversely, we need to show for every $d \in D$ such that $\varphi(d) \in p^2 D$, $d$ is congruent to an element of $D_1/ p D_2$. Let $d = d_1 + d_0 + d_{-1}$ be the decomposition such that $d_i \in D_i$. Since $\varphi(d) = p \bar{b} \sigma(d) = p h_1 h_2 \mu(p) \sigma(d) \in p^2 D$, $d_0 \equiv d_{-1} \equiv 0 \mod p$. Therefore, $(d\mod p) \in D_1/pD_1$. We can check that $\Fil^1_\varphi(D/pD)$ is equal to $(D_1\oplus D_0)$ modulo $p$ similarly.

(b) The condition that $\mathrm{rank\,} \varphi \tensor_{W(k)} k = 1$ implies that $\mathrm{rank\,} D_{-1} = 1$. Since $D_1 = (D_{-1})^\vee$, $\mathrm{rank\,} D_1 = 1$.
\end{proof} 

\begin{lemma}
\label{crysConj}
Suppose $(D', \varphi')$ is another K3 crystal with $D' \iso N \tensor W(k)$ and there is an embedding $\iota : D' \into D \tensor K_0$ which respects the Frobenius action by $\varphi'$ and $\varphi \tensor 1$. If $g \in \SO(K_0)$ is an element such that $g(D) = \iota(D')$, then $g^{-1} \bar{b} \sigma(g) \in \SO(W(k)) \mu(p) \SO(W(k)) $. 
\end{lemma}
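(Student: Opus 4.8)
The plan is to translate the Frobenius--compatibility of $\iota$ into an identity of ``Frobenius matrices'', use it to rewrite $g^{-1}\bar{b}\sigma(g)$ in terms of the analogous element for $(D',\varphi')$, and then invoke Lemma~\ref{prodcocharacter} together with the classical fact that all cocharacters ``of K3 type'' of $\SO:=\SO(N)$ lie in a single conjugacy class.

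\emph{Step 1 (the Frobenius matrices).} Using the fixed isometry $D'\iso N\tensor W(k)$, write $\varphi'=p\bar{b}'\sigma$ with $\bar{b}'\in\mathrm{O}(N\tensor K_0)$ (the K3--crystal relation forces $\bar{b}'$ into the orthogonal group). Writing $\iota$ as a matrix over $K_0$ relative to the fixed $N$--bases of $D'$ and $D$, the identity $\iota\circ\varphi'=(\varphi\tensor 1)\circ\iota$ becomes $\iota\bar{b}'\sigma(x)=\bar{b}\,\sigma(\iota)\sigma(x)$ for all $x$, hence
\[
\bar{b}'=\iota^{-1}\bar{b}\,\sigma(\iota).
\]
Since $\iota$ preserves the forms, $\det\iota=\pm1\in\IQ_p$ is $\sigma$--fixed, so $\det\bar{b}'=\det\bar{b}=1$ and $\bar{b}'\in\SO(K_0)$. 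Now $g(D)=\iota(D')$ while $D=N\tensor W(k)=D'$ as lattices, so $u:=g^{-1}\iota$ preserves the lattice $N\tensor W(k)$, i.e. $u\in\mathrm{O}(N\tensor W(k))$; thus $g=\iota u^{-1}$ and
\[
g^{-1}\bar{b}\,\sigma(g)=u\,\iota^{-1}\bar{b}\,\sigma(\iota)\,\sigma(u)^{-1}=u\,\bar{b}'\,\sigma(u^{-1}).
\]
If $u\in\SO(W(k))$ this shows $g^{-1}\bar{b}\,\sigma(g)\in\SO(W(k))\,\bar{b}'\,\SO(W(k))$; when $\iota\notin\SO$ one composes $u$ with a reflection $r$, and since $\sigma$ carries reflections to reflections one finds $g^{-1}\bar{b}\,\sigma(g)\in\SO(W(k))\,(r\bar{b}'r^{-1})\,\SO(W(k))$. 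In either case $g^{-1}\bar{b}\,\sigma(g)$ lies in $\SO(W(k))\,c\,\SO(W(k))$ for some $c$ that is $\mathrm{O}(N\tensor W(k))$--conjugate to $\bar{b}'$.

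\emph{Step 2 (reduction to conjugacy of cocharacters).} Apply Lemma~\ref{prodcocharacter}(a) to the K3 crystal $(D',\varphi')$: there is a cocharacter $\mu':\IG_m\to\SO_{W(k)}$ with $\bar{b}'\in\SO(W(k))\mu'(p)\SO(W(k))$, so by Step~1 we get $g^{-1}\bar{b}\,\sigma(g)\in\SO(W(k))\mu''(p)\SO(W(k))$ for some $\mathrm{O}(N\tensor W(k))$--conjugate $\mu''$ of $\mu'$. By the Cartan decomposition (as in the proof of Lemma~\ref{prodcocharacter}(a)), $\SO(W(k))\lambda(p)\SO(W(k))$ depends only on the conjugacy class of $\lambda$, so it suffices to show $\mu$ and $\mu'$ (hence $\mu$ and $\mu''$) are $\SO(N\tensor W(k))$--conjugate. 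Now the proof of Lemma~\ref{prodcocharacter} shows that the degree $+1$ and degree $-1$ graded pieces of $\sigma^{-1}(\mu)$ on $D=N\tensor W(k)$ are isotropic rank--one direct summands, and non--degeneracy of the pairing modulo $p$ forces them to span a unimodular (hyperbolic) plane; likewise for $\mu'$ on $D'=N\tensor W(k)$. Thus a cocharacter of K3 type corresponds to an ordered pair of isotropic rank--one direct summands of $N\tensor W(k)$ spanning a unimodular plane, and $\SO(N\tensor W(k))$ acts transitively on such pairs by Witt's extension theorem (choose isotropic generators, extend the evident isometry of the two hyperbolic planes, and correct the determinant by a reflection in the orthogonal complement, which has rank $m-2\ge 1$). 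Applying $\sigma$, $\mu$ and $\mu'$ are conjugate, whence $\SO(W(k))\mu''(p)\SO(W(k))=\SO(W(k))\mu(p)\SO(W(k))$ and the proof is complete.

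\emph{Main obstacle.} Everything apart from the transitivity statement is routine manipulation of the matrices $\bar{b}$, $\bar{b}'$, $\iota$; the substantive point is the uniqueness of the conjugacy class of K3--type cocharacters of $\SO(N)$, which rests on Witt's extension theorem over $W(k)$. The same fact makes the orientation ambiguity (whether $\iota$, $u$ lie in $\SO$ rather than only $\mathrm{O}$) harmless, since that conjugacy class is stable under $\mathrm{O}(N\tensor W(k))$.
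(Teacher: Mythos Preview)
Your proof is correct, but it takes a somewhat different and more laborious route than the paper's. Two points of comparison:

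\emph{Step 1.} The paper simply \emph{chooses} the identification $D'\iso N\tensor W(k)$ to be $g^{-1}\circ\iota$, so that $\iota$ becomes $g$ itself; then $\bar{b}'=g^{-1}\bar{b}\,\sigma(g)$ drops out immediately and the whole $\mathrm{O}$-versus-$\SO$ discussion with $u$ and reflections disappears. Your version is not wrong, but the detour is avoidable. (A small imprecision: in the case $\det u=-1$ you get $c=r\bar{b}'\sigma(r)^{-1}$, not $r\bar{b}'r^{-1}$; this is harmless if you choose the reflection $r$ defined over $\IZ_p$, which is always possible since $N$ is self-dual and $p$ is odd, but you should say so.)

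\emph{Step 2.} Here your approach genuinely differs. The paper shows $\mu$ and $\mu'$ are $\SO(W(k))$-conjugate by first observing that their reductions mod $p$ induce filtrations on $N\tensor k$ of the same shape (an isotropic line and its orthogonal), hence are $\SO(k)$-conjugate, and then lifting the conjugacy to $W(k)$ via the general result \cite[IX~3.3]{DG} on cocharacters of reductive groups over strictly henselian bases. You instead argue directly over $W(k)$ via Witt's extension theorem, exhibiting transitivity of $\SO(N\tensor W(k))$ on ordered pairs of isotropic rank-one summands spanning a hyperbolic plane. Your argument is more elementary and self-contained; the paper's is shorter but invokes a black-box lifting result. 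Both are valid.
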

\begin{proof}
We may assume that $D' = N \tensor W(k)$ and $\iota$ is given by restricting the domain of $g : N \tensor K_0 \stackrel{\sim}{\to} N \tensor K_0$ to $N \tensor W(k)$. The action of $\varphi' \tensor 1$ on $D' \tensor K_0$ is then given by $p \bar{b}' \sigma$, where $\bar{b}' = g^{-1} \bar{b} \sigma(g)$. 

Now we use the assumption that $(D', \varphi')$ is a K3 crystal to deduce that $g^{-1} \bar{b} \sigma(g) \in \SO(W(k)) \mu(p) \SO(W(k))$. Let $\mu'$ be a cocharacter of $\SO_{W(k)}$ such that $\bar{b}'$ belongs to $\SO(W(k)) \mu'(p) \SO(W(k))$ and $\sigma^{-1}(\mu')$ gives the filtration $\Fil^\bullet_{\varphi'}(1)$ on $D' \tensor k$. Clearly it suffices to show that $\mu$ and $\mu'$ are conjugate by an element in $\SO(W(k))$. Note that the filtrations on $N \tensor k$ induced by $\sigma^{-1}(\mu)$ and $\sigma^{-1}(\mu')$ are both of the form $0 = \Fil^{-2} \subset \Fil^{1} \subset \Fil^0 \subset \Fil^{-1} = N \tensor k$ with $\Fil^0 = (\Fil^1)^\perp$ and $\dim_k \Fil^1 = 1$. This implies that reductions of $\sigma^{-1}(\mu)$ and $\sigma^{-1}(\mu')$ modulo $p$ are conjugate by an element in $\SO(k)$. By \cite[IX 3.3]{DG}, $\sigma^{-1}(\mu)$ and $\sigma^{-1}(\mu')$, and hence $\mu$ and $\mu'$, are conjugate by element of $\SO(W(k))$.  
\end{proof}

\noindent \textbf{Proof of Theorem~\ref{mainCor}.} Let $(X, \xi)$ be the quasi-polarized K3 surface of Thm~\ref{mainCor}. Clearly it suffices to prove the case when $\xi$ is primitive. Let $t \in \wt{\Mod}_{2d, \fK^\ad_p, \IZ_{(p)}}(\bar{\IF}_p)$ be a point such that $(\sX_t, \bxi_t)$ can be identified with $(X, \xi)$. Let $L = L_d$ and set up period morphisms as in section~\ref{period}. Let $s \in \shS_{\fK_p}(G, \Ohm)(\bar{\IF}_p)$ be a lift of the image of $t$ in $\shS_{\fK^\ad_p}(G^\ad, \Ohm)(\bar{\IF}_p)$.
As in section~\ref{expisog}, fix an isomorphism $H \tensor W \iso H^1_\cris(\sA_s/W)$ of $\IZ/2 \IZ$-graded right $\Cl(L)$-modules which send $\pi$ to $\bpi_{\cris, s}$ and construct maps $\iota_s : X_p \times X^p \to \shS_{\fK_p}(G, \Ohm)(\bar{\IF}_p)$ and $\iota^\ad_s : X^\ad_p \times X^{\ad, p} \to \shS_{\fK^\ad_p}(G^\ad, \Ohm)(\bar{\IF}_p)$.

Set $W = W(\bar{\IF}_p)$ and $K_0 = W[1/p]$. Recall that we can identify $P^2_\cris(X/W)(1)$ with $ \bL_{\cris, t}$ by Prop.~\ref{compareCoh} and the isomorphism $H \tensor W \iso H^1_\cris(\sA_s/W)$ induces an isomorphism $\bL_{\cris, t} \iso L \tensor W$. Let $(M_p, \varphi) \subset P^2_\cris(X/W) \tensor K_0$ be the embedding in the statement of the theorem. There exists $g^\ad_p \in G^\ad(K_0)$ such that $g^\ad_p \cdot P^2_\cris(X/W) = M_p \subset P^2_\cris(X/W) \tensor K_0$. By Lem.~\ref{crysConj}, $g^\ad_p \in X^\ad_p$. 

Similarly, for every isometric embedding $M^p \subset P^2_\et(X, \IA_f^p)$ with $M^p \iso P^2_\et(X, \what{\IZ}^p)$, we can pick $g^{\ad, p} \in X^{\ad, p}$ such that $g^{\ad, p} \cdot P_\et^2(X, \what{\IZ}^p) = M^p$. Under the assumption $p > 18d + 4$, the period map $\wt{\Mod}_{2d, \fK^\ad_p, \IZ_{(p)}} \to \shS_{\fK^\ad_p}(G^\ad, \Ohm)$ is known to be surjective on $\bar{\IF}_p$-points (\cite[Thm~4.1]{Matsumoto}). It follows from Prop.~\ref{1.4.15} and Prop.~\ref{main} that any point in the preimage of $\iota_s^\ad((g^\ad_p, g^{\ad, p}))$ under the period map will give us the desired $(X', \xi')$. \qed
\subsection{Proof of Theorem~\ref{CML}}
\begin{proof}
Let $(X, \xi)$ be the quasi-polarized surface of Thm~\ref{CML}. Clearly we can assume that $\xi$ is primitive.  Let $t \in \wt{\Mod}_{2d, \fK^\ad_p, \IZ_{(p)}}(\bar{\IF}_p)$ be a point such that $(\sX_t, \bxi_t)$ can be identified with $(X, \xi)$. Let $s \in \shS_{\fK_p}(G, \Ohm)(\bar{\IF}_p)$ be a lift of $t$. By \cite[Thm~(0.4)]{Modp}, the isogeny class of $s$ contains a point $s'$ which lifts to a CM point on $\Sh_{\fK_p}(G, \Ohm)$. More precisely, there exists a finite flat extension $V'$ of $W := W(\bar{\IF}_p)$ and a $V'$-point $s'_{V'}$ lifting $s$ such that for some (and hence any) isomorphism $\bar{K}' \iso \IC$, where $K' := V[1/p]$, the complex point $s'_\IC := s'_{V'} \tensor \IC$ is a special point. Choose an $\bar{K}' \iso \IC$ and let $t'$ and $t'_\IC$ denote the images of $s'$ and $s'_\IC$ on $\shS_{\fK^\ad_p}(G^\ad, \Ohm)$ respectively. 

By the surjectivity of the period map over $\IC$, we can choose a preimage of $t'_\IC$ in $\wt{\Mod}_{2d, \fK^\ad_p, \IZ_{(p)}}(\IC)$, which we still denote by $t_\IC'$. The choice of the preimage will not be important. Let $(\sX_{t_\IC'}, \bxi_{t_\IC'})$ be the associated quasi-polarized complex K3 surface. By Thm~\ref{Zarhin}, $E:= \End_{\Hdg} T(\sX_{t'_\IC})_\IQ$ is a CM field generated by some Hodge isometry $\tau'$ as a $\IQ$-algebra, and we have $\dim_E T(\sX_{t'_\IC})_\IQ = 1$. We extend $\tau'$ to $\wt{\tau}' \in \End_{\Hdg}(P^2(\sX_{t'_\IC}, \IQ))$ such that $\wt{\tau}'$ fixes the Hodge classes. 

By lemma~\ref{surjection}, there exists a CSpin-isogeny $\psi'_\IC : \sA_{s_\IC'} \to \sA_{s_\IC'}$ which induces $\wt{\tau}'$ via the identifications
$$ P^2(\sX_{t'_\IC}, \IQ(1)) \stackrel{\sim}{\to} \bL_{B, t'_\IC} \tensor \IQ =  \bL_{B, s'_\IC} \tensor \IQ \into \End (H_{B, s'_\IC}) \tensor \IQ = \End (H^1(\sA_{s_\IC'}, \IQ)). $$ Now specialize $\psi'_\IC$ to a quasi-isogeny $\psi_{\tau'} : \sA_{s'} \to \sA_{s'}$. One easily checks that $\psi_{\tau'}$ is a CSpin-isogeny using the smooth and proper base change theorem and Rmk~\ref{dRham}. Let $\psi : \sA_{s} \to \sA_{s'}$ be a CSpin-isogeny connecting $s$ and $s'$. Set $\psi_{\tau} : = \psi^{-1} \circ \psi_{\tau'} \circ \psi$. $\psi_{\tau}$ induces an isometry of F-isocrystals $\tau_\cris: P^2_\cris(\sX_{t}/W)[1/p] \to P^2_\cris(\sX_{t}/W)[1/p]$. By Lem.~\ref{workhorce}, we can find a lift $t_V$ of $t$ for some finite flat extension $V$ over $W$ with fraction field $K = V[1/p]$ such that the filtration on $P^2_\cris(\sX_{t}/W) \tensor K$ induced by $(\sX_{t_V}, \bxi_{t_V})$ is respected by $\tau_\cris \tensor K$. We will show that $\sX_{t_{V}} \tensor_{V} \IC$ has CM for any $V \into \IC$. 

Let $s_V \in \shS_{\fK_p}(G, \Ohm)(V)$ be a lift of $t_V$ such that $s = s_V \tensor \bar{\IF}_p$. As in the proof of Thm~\ref{mainTh}, $\psi_{\tau}$ respects the Hodge filtration induced on $H^1_\cris(\sA_{s}/W) \tensor K$ by $\sA_{s_V}$.  By \cite[Thm~3.15]{BO} again, we may lift $\psi_{\tau}$ to a quasi-isogeny $\psi_{\tau, V} : \sA_{s_V} \to \sA_{s_V}$.  For any embedding $V \into \IC$, the CSpin-isogeny $\psi_{\tau, V} \tensor \IC: \sA_{s_V \tensor \IC} \to \sA_{s_V \tensor \IC}$ induces a Hodge isometry $\wt{\tau}: P^2(\sX_{t_V \tensor \IC}, \IQ) \to P^2(\sX_{t_V \tensor \IC}, \IQ)$. One may easily check from the construction that $\wt{\tau} \tensor \IQ_\ell$ is sent precisely to $\wt{\tau}' \tensor \IQ_\ell$ via the isomorphisms
\begin{equation}
\label{transfer}
P^2(\sX_{t_V \tensor \IC}, \IQ) \tensor \IQ_\ell \iso P^2_\et(\sX_t, \IQ_\ell) = \bL_{\ell, t} \stackrel{\text{conj. by }(\psi^*_\et)^{-1}}{\to} \bL_{\ell, t'} \iso P^2(\sX_{t'_\IC}, \IQ) \tensor \IQ_\ell.
\end{equation}
Therefore, we have an isomorphism of $\IQ$-algebras $\IQ(\wt{\tau}) \iso \IQ(\wt{\tau}')$. Moreover, we have a commutative diagram 
\begin{center}
\begin{tikzcd}
\<\bxi_{t_V \tensor \IC}\>^\perp  \arrow{r}{} \arrow{d}{} & \<\bxi_{t}\>^\perp \arrow{d}{} &  & \<\bxi_{t'_\IC} \>^\perp  \arrow{d}{} \\
 L(\sA_{s_\IC}) \arrow{r}{} & L(\sA_s) \arrow{r}{\text{conj. by }\psi} & L(\sA_{s'}) & L(\sA_{s'_\IC}) \arrow{l}{}
\end{tikzcd}
\end{center}
Here the orthogonal complements $\<\bxi_{t_V \tensor \IC}\>^\perp, \<\bxi_{t}\>^\perp$, and $\<\bxi_{t'_\IC} \>^\perp$ are taken inside $\Pic(\sX_{t_V \tensor \IC})$, $\Pic(\sX_{t})$, and $\Pic(\sX_{t'_\IC})$ respectively.
All arrows in the diagram are isomorphisms of quadratic lattices: The top horizontal arrow is an isomorphism because the construction in Lem.~\ref{workhorce} lifts the entire Picard group, the vertical arrows are isomorphisms given by Prop.~\ref{submotive}, and the specialization map $L(\sA_{s'_\IC}) \to L(\sA_{s'})$ is an isomorphism by Prop.~\ref{CMsp}. The diagram gives us an isometry $\<\bxi_{t_{V} \tensor \IC}\>^\perp  \stackrel{\sim}{\to}  \<\bxi_{t'_\IC} \>^\perp $ which is clearly compatible with (\ref{transfer}). Therefore, (\ref{transfer}) restricts to an isometry
$$ T(\sX_{t_{V} \tensor \IC}) \tensor \IQ_\ell \iso T(\sX_{t'_\IC}) \tensor \IQ_\ell.  $$
Let $\tau$ be the restriction of $\wt{\tau}$ to $T(\sX_{t_V \tensor \IC}, \IQ)$. Then the above isomorphism tells us that $\IQ(\wt{\tau}) \iso \IQ(\wt{\tau}')$ restricts to $\IQ(\tau) \iso \IQ(\tau')$. Since $\tau$ acts as an Hodge endomorphism on $T(\sX_{t_V \tensor \IC})$ and $\dim_{\IQ(\tau)} T(\sX_{t_{V} \tensor \IC})_\IQ = 1$, by Thm~\ref{Zarhin} $\sX_{t_V \tensor \IC}$ has CM. \end{proof}

Although we introduced the K3 surface $\sX_{t'_\IC}$ to make the argument more symmetric, one can also argue purely in terms of Hodge structures of K3 type. 

\section{Further Remarks on Isogenies}

Motivated by global Torelli theorems, we make the following definitions. 
\begin{definition}
Let $k$ be a perfect field with algebraic closure $\bar{k}$. Let $X, X'$ be K3 surfaces over $k$ and let $f : X \rightsquigarrow X'$ be an isogeny over $k$. 
\begin{itemize}
    \item We say that $f$ is \textit{polarizable} (resp. \textit{quasi-polarizable}) if there exists an ample (resp. big and nef) class $\xi \in \Pic(X')_\IQ$ such that $f^*(\xi)$ is still ample (resp. big and nef).
    \item If $\mathrm{char\,} k = 0$, we say $f$ is $\IZ$-integral if the induced isometries $H^2_\et(X_{\bar{k}}', \IQ_\ell) \stackrel{\sim}{\to} H^2_\et(X_{\bar{k}}, \IQ_\ell)$ preserves the $\IZ_\ell$-integral structures for every prime $\ell$.
    \item  If $\mathrm{char\,} k = p > 0$, we say $f$ is $\IZ$-integral if the induced map $H^2_\et(X_{\bar{k}}', \IQ_\ell) \stackrel{\sim}{\to} H^2_\et(X_{\bar{k}}, \IQ_\ell)$ preserves the $\IZ_\ell$-integral structures for every prime $\ell \neq p$ and $H^2_\cris(X'/W(k))[1/p] \stackrel{\sim}{\to} H^2_\cris(X/W(k))[1/p]$ preserves the $W(k)$-integral structures. 
\end{itemize}
\end{definition}

From now on, we let $k$ denote some algebraically closed field of characteristic $p \neq 2$.

If an isogeny $f : X \rightsquigarrow X'$ is equivalent (cf. Def.~\ref{isogdef}) to the graph of an isomorphism $\iota : X \stackrel{\sim}{\to} X'$, then we simply say that $f$ is induced by the isomorphism $\iota$. The classical Torelli theorem implies that when $k = \IC$, an isogeny $f : X \rightsquigarrow X'$ is induced by an isomorphism if and only if $f$ is polarizable and $\IZ$-integral. In fact, in view of Buskin's theorem, the classical Torelli theorem is equivalent to this statement about isogenies. We will explain that Ogus' crystalline Torelli theorem \cite[Thm~II]{Ogus2} can be formulated in terms of isogenies just as the classical Torelli theorem (cf. Thm~\ref{Ogusisog} below).

For abelian varieties there is no distinction between isogenies and polarizable isogenies, because the pullback of an ample divisor along an isogeny of abelian varieties is clearly always ample. For K3 surfaces however, there is a special class of isogenies customarily called ``reflection in (-2)-curves", which are never polarizable: Let $X$ be a K3 surface over $k$ and let $\beta$ be a line bundle on $X$ with $\beta^2 = -2$. Then up to replacing $\beta$ by $\beta^\vee$, $\beta$ is effective (cf. \cite[1.1.4]{K3book}). Let $C$ be a curve representing $\beta$ and $\sO_C$ be the structure sheaf of $C$, which we view as a coherent sheaf on $X$. Then $\sO_C$ is a spherical object in the bounded derived category of $X$, which we denote by $D(X)$. As a spherical object, $- \tensor \sO_C$ induces a Fourier-Mukai auto-equivalence $T_C : D(X) \stackrel{\sim}{\to} D(X)$. By Orlov's theorem  \cite[2.2]{Orlov}, $R_C$ is induced by a Fourier-Mukai kernel $P_C$, which is a perfect complex on $X \times X$. The Chow realization of the $P_C$ (cf. \cite[2.9]{LO1}) induces an action on the second cohomology which is nothing but reflection in $\beta$, i.e., $x \mapsto x + \< x, \beta \> \beta$, for every cohomology theory.\footnote{The action on the entire Mukai lattice by $P_C$ is reflection in the Mukai vector $\< 0, \beta, 1\>$ (cf. the proof of \cite[Prop.~6.2]{LO1}). This of course restricts to reflection in $\beta$ on second cohomology. One can also use $\sO_C(-1)$ instead as in \cite[10.3(iii)]{HuyB2} to obtain a reflection in $\<0, \beta, 0\>$ on the Mukai lattice.} We can view the Chow realization of $P_C$ as an isogeny $R_C : X \rightsquigarrow X$. One quickly checks from the formula $x \mapsto x + \< x, \beta \> \beta$ that $R_C$ is $\IZ$-integral.

It turns out that every isogeny differs from a polarizable isogeny by a composition of reflections in $(-2)$-curves up to a sign. To make this statement precise, view $R_C$ as an element of the $\IQ$-algebra of correspodences of degree $0$ from $X$ to $X$, where two correspondences are viewed as equivalent if their crystalline and $\ell$-adic realizations all agree. Let $R$ be the group of auto-isogenies $X \rightsquigarrow X$ generated by $\{ R_C : C \text{ is a $(-2)$-curve on $X$} \}$ in this correspondence algebra. Let $\pm R$ be the group generated by $R$ and $-1$. Then we have the following:

\begin{lemma}
\label{refl} Let $X, X'$ be K3 surfaces over $k$. 
\begin{enumerate}[label=\upshape{(\alph*)}]
    \item For every class $\xi \in \NS(X)$ with $\xi^2 > 0$, there exists an element $\alpha \in \pm R$ such that $\alpha^*(\xi)$ is big and nef. 
    \item For every isogeny $f : X \rightsquigarrow X'$, there exists an element $\alpha \in \pm R$ such that $f \circ \alpha$ is a polarizable isogeny. 
\end{enumerate}
\end{lemma}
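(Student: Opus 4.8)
The plan is to prove both parts by the standard Weyl-chamber argument for the hyperbolic lattice $\NS(X)$, adapted to the isogeny setting. First I would recall the structure of the positive cone: by the Hodge index theorem $\NS(X)$ has signature $(1,\rho-1)$, so the set $\{x \in \NS(X)_\IR : x^2 > 0\}$ has two connected components, and the \emph{positive cone} $\sC_X$ is the one containing ample classes. The $(-2)$-curves $C$ on $X$ define reflections $r_C : x \mapsto x + \<x,C\>C$ which preserve $\NS(X)$ and $\sC_X$, and the group $W_X$ they generate acts on $\sC_X$ with a fundamental domain whose interior is the ample cone (this is the classical chamber decomposition of the positive cone cut out by the walls $C^\perp$ for $(-2)$-classes $C$; see \cite[Ch.~8]{K3book}). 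The key point to transfer to the present context is that each $r_C$ is realized \emph{as a correspondence} by the reflection isogeny $R_C$ constructed just above the lemma, so $W_X$ is (a quotient of) the image of $R$ acting on $\NS(X)$, and $\pm R$ surjects onto $\{\pm 1\} \times W_X$ acting on $\NS(X)_\IQ$.

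For part (a): given $\xi \in \NS(X)$ with $\xi^2 > 0$, after possibly replacing $\xi$ by $-\xi$ (this is where the sign, i.e.\ the factor $-1 \in \pm R$, is used) we may assume $\xi \in \sC_X$. Then I would argue that the $W_X$-orbit of $\xi$ meets the closure of the fundamental domain, i.e.\ there is $w \in W_X$ with $w(\xi)$ in the nef cone (the closure of the ample cone): since $\xi^2 > 0$, $\xi$ lies in the interior of $\sC_X$, only finitely many walls $C^\perp$ separate $\xi$ from a chosen ample class $h$ (because $\<C, h\>$ is bounded for $C$ in the relevant region, a standard finiteness fact for $(-2)$-classes), and reflecting across them one at a time strictly decreases $\<\cdot, h\>$ until no separating wall remains; the resulting class is nef. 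Lifting $w$ to an element $\alpha \in \pm R$ (composing the corresponding $R_C$'s and possibly $-1$), we get $\alpha^*\xi$ big and nef — nef by the above, and big because a nef class with positive self-intersection is big (cf.\ \cite[Thm~2.2.16]{Laz}, already cited in Rmk~\ref{ampleliftstoample}). Here I should be slightly careful about the direction of the action: $\alpha^*$ on cohomology/$\NS$ is the correspondence pullback, and since each $R_C$ induces the \emph{involution} $r_C$, the distinction between $\alpha^*$ and its inverse is immaterial, so I can match $\alpha^*$ with the chosen $w$.

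For part (b): let $f : X \rightsquigarrow X'$ be an isogeny, so $f^* : \NS(X')_\IQ \xrightarrow{\sim} \NS(X)_\IQ$ is an isometry (it carries $\Pic(X')_\IQ$ isomorphically to $\Pic(X)_\IQ$ because $f$ is an isometry on all realizations, hence matches the spaces of Hodge/Tate/Frobenius-invariant classes, which are the Néron–Severi spaces by Lefschetz $(1,1)$ over $\IC$ and by Tate/Artin for $\bar\IF_p$ — this identification is the one used implicitly throughout Section~5). Pick any ample $\xi \in \NS(X')$; then $f^*\xi \in \NS(X)$ has positive self-intersection (isometries preserve $\xi^2 > 0$). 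Apply part (a) to the class $f^*\xi \in \NS(X)$: there is $\alpha \in \pm R$ with $\alpha^*(f^*\xi) = (f \circ \alpha)^*(\xi)$ big and nef. By slightly perturbing $\xi$ within the ample cone of $X'$ if necessary (or noting that we are free to choose $\xi$ to avoid the finitely many walls), we can in fact arrange $(f\circ\alpha)^*\xi$ to be ample, not merely big and nef: the ample cone of $X$ is open and nonempty, and the preimage under the linear isomorphism $(f\circ\alpha)^*$ of a sufficiently general ample class of $X'$ lands in it. Thus $f \circ \alpha$ is polarizable, and since $\alpha \in \pm R$ and $R$ is a group, this is the desired statement (replacing $\alpha$ by $\alpha^{-1}$ if one insists on $f\circ\alpha$ versus $\alpha\circ f$, which again is harmless because each generator $R_C$ is an involution up to sign).

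The main obstacle I anticipate is purely bookkeeping rather than conceptual: verifying cleanly that $R_C$ acts on \emph{all} cohomological realizations exactly as the reflection $r_C$ with the correct variance, so that the group $\pm R$ of correspondences really maps onto $\{\pm1\}\times W_X \subset O(\NS(X)_\IQ)$, and that the chamber-decomposition / finiteness-of-separating-walls input (classical over $\IC$, due to Ogus/Rudakov–Shafarevich in characteristic $p$ for the non-supersingular and supersingular cases alike) is available in the generality needed here. Once that dictionary is in place, parts (a) and (b) are the textbook Weyl-group argument.
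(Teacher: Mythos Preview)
Your proposal is correct and follows essentially the same route as the paper: for (a) the paper simply cites \cite[Lem.~7.9]{Ogus}, which is precisely the Weyl-chamber argument you sketch, and for (b) both you and the paper pull back an ample class via $f^*$, apply (a), and then use an open-neighborhood/perturbation argument (the paper phrases this as ``$\alpha^*(f^*(D_{a'}))$ intersects the ample cone of $X$'' since the nef cone is the closure of the ample cone). The only cosmetic discrepancy is that $f^*(\xi)$ lies a priori in $\NS(X)_\IQ$ rather than $\NS(X)$, but (a) applies equally well after clearing denominators, and the paper is equally informal on this point.
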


\begin{proof}
(a) is a restatement of \cite[Lem.~7.9]{Ogus} in terms of isogenies. To prove (b), let $a' \in \NS(X')$ be an ample class and set $a := f^*(a') \in \NS(X)$. Take an open neighborhood $D_{a'}$ of $a'$ in $\NS(X')_\IR$ which is contained in the ample cone of $X'$. Since $f^*$ is an isometry, $a^2 > 0$. By (a) there exists an element $\alpha \in \pm R$ such that $\alpha^*(a)$ is big and nef. Since the nef cone is the closure of the ample cone (\cite[1.4 C, 2.2 B]{Laz}), $\alpha^*(f^*(D_{a'}))$ intersects the ample cone of $X$.
\end{proof}

\begin{remark}
The above result and arguments are well known to experts and are usually used as a reduction step (e.g., \cite[Prop.~6.2]{Buskin},  \cite[Lem.~6.2]{LO1}). 
\end{remark}

\begin{lemma}
\label{spread}
Let $X$ be a supersingular K3 surface over $k$. Then the maps 
\begin{enumerate}[label=(\alph*)]
    \item $c_1 : \NS(X) \tensor \IZ_\ell \to H^2_\et(X, \IZ_\ell)$ for every prime $\ell \neq p$;
    \item $c_1 : \NS(X) \tensor \IZ_p \to H^2_\cris(X/W(k))^{F = p}$
\end{enumerate}
are isomorphisms. 
\end{lemma}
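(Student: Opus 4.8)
The plan is to use the Tate conjecture for supersingular K3 surfaces, which is known in characteristic $p \geq 5$ (and in fact for all odd $p$ by work of Charles, Madapusi Pera, and Kim--Madapusi Pera), combined with the defining property of supersingularity that the Picard rank equals $22$.

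First I would record the numerical input: by definition, a supersingular K3 surface has $\rho(X) = \mathrm{rank}\, \NS(X) = 22 = b_2(X)$. For part (a), both sides of the map $c_1 \colon \NS(X) \tensor \IZ_\ell \to H^2_\et(X, \IZ_\ell)$ are free $\IZ_\ell$-modules of rank $22$, so it suffices to show the map is injective with torsion-free cokernel, or equivalently that it becomes an isomorphism after tensoring with $\IQ_\ell$ together with a check that the lattice $\NS(X)$ maps onto a saturated (primitive) sublattice. Injectivity after $\tensor \IQ_\ell$ is the $\ell$-adic Tate conjecture for $X$; surjectivity after $\tensor \IQ_\ell$ then follows by the rank count. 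For the integral statement one argues that the cokernel, being a finite group, must vanish: this uses that $\NS(X)$ is already saturated in $H^2_\et(X,\IZ_\ell)$ because the cycle class of a line bundle generating a rational class that is integral must itself be integral — more carefully, one invokes that $c_1 \colon \NS(X)\tensor\IZ_\ell \to H^2_\et(X,\IZ_\ell(1))$ is known to be injective with cokernel the $\ell$-adic Tate module of the Brauer group $\Br(X)$, which is finite, and for a supersingular K3 surface over an algebraically closed field $\Br(X)$ is $p$-torsion (indeed $p$-primary), hence has trivial $\ell$-part for $\ell \neq p$. This gives (a).

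For part (b), the target is $H^2_\cris(X/W(k))^{F=p}$. By the Tate conjecture in the crystalline formulation (equivalently, by Artin's results together with the fact, now a theorem, that supersingular K3 surfaces satisfy the Tate conjecture), the map $c_1 \colon \NS(X)\tensor \IQ_p \to (H^2_\cris(X/W(k))[1/p])^{F=p}$ is an isomorphism of $\IQ_p$-vector spaces; one must be a little careful that for a supersingular K3 crystal the Newton polygon is a straight line of slope $1$, so $H^2_\cris(X/W(k))^{F=p}$ is a $\IZ_p$-lattice of rank $22$ and $(H^2_\cris[1/p])^{F=p} = H^2_\cris(X/W(k))^{F=p}\tensor\IQ_p$. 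Then, exactly as in the $\ell$-adic case, I would identify the cokernel of $c_1 \colon \NS(X)\tensor\IZ_p \to H^2_\cris(X/W(k))^{F=p}$ with (the relevant piece of) the $p$-adic Tate module of $\Br(X)$; for a supersingular K3 surface this is controlled by the Artin invariant $\sigma_0$ and is in any case finite, and the point is that $\NS(X)$ is $p$-saturated in $H^2_\cris(X/W)^{F=p}$. Concretely this is precisely the content of Ogus's work on supersingular K3 crystals (\cite{Ogus}, \cite{Ogus2}): the $\IZ_p$-lattice $H^2_\cris(X/W(k))^{F=p}$ together with its pairing is the "characteristic subspace" data, and $\NS(X)\tensor\IZ_p$ is its full integral Tate module. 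I would simply cite this.

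The main obstacle — really the only substantive point — is the integrality (surjectivity onto the integral lattice, not just the rational one): the rational statement is a direct consequence of the Tate conjecture plus $\rho(X)=22$, but upgrading to an isomorphism of $\IZ_\ell$- and $\IZ_p$-lattices requires knowing the cokernel is trivial rather than merely finite. I expect to handle this by the Kummer-sequence / Brauer-group argument (for $\ell \neq p$, $T_\ell \Br(X) = 0$ since $\Br(X)$ of a supersingular K3 over $\bar{\IF}_p$ is $p$-primary of finite corank, in fact $\Br(X) \iso (\IQ_p/\IZ_p)^{?}$ killed after finitely many steps in a way that has no $\ell$-part) and for $\ell = p$ by direct appeal to Ogus's description of supersingular K3 crystals, where the equality $\NS(X)\tensor\IZ_p = H^2_\cris(X/W)^{F=p}$ is built into the definition of the period. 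If one wants to avoid invoking the full strength of the Tate conjecture, an alternative is to cite \cite[Thm~II and \S3]{Ogus2} directly, where this statement (the "supersingular crystalline Torelli" package) includes precisely the assertion that $\NS(X)$ recovers the full Tate module both $\ell$-adically and crystalline; then parts (a) and (b) are immediate restatements.
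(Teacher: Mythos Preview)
Your approach is essentially the same as the paper's: establish the rational isomorphism via the Tate conjecture, then show the cokernel is torsion-free. For part (a) you and the paper both invoke the Kummer-sequence/Brauer-group argument (the paper cites \cite[Lem.~2.2.2]{LMS}); your observation that $T_\ell \Br(X)$ is automatically torsion-free of rank $b_2-\rho=0$ is exactly what underlies that lemma.

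Two small points where the paper is sharper. First, the paper explicitly handles the passage from $k=\bar{\IF}_p$ (where the Tate conjecture is stated) to an arbitrary algebraically closed $k$ of characteristic $p$, via Artin's theorem \cite[Thm~1.1]{Artin} and a spreading-out argument; you do not mention this. Second, for the crystalline integrality in (b) the paper cites a clean general fact---that the crystalline cycle class map has torsion-free cokernel---due to Deligne \cite[Rmk~3.5]{Deligne2} (see also \cite[Lem.~2.2.4]{LMS}). Your appeal to Ogus's period theory is in the right spirit but is vaguer than necessary: the statement ``$\NS(X)\otimes\IZ_p = H^2_\cris(X/W)^{F=p}$ is built into the definition of the period'' is not quite how Ogus sets things up, and you would still need to extract the torsion-freeness of the cokernel from his work. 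Citing Deligne's result directly is cleaner and avoids any circularity.
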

\begin{proof} Let $\ell$ be any prime not equal to $p$. First, we show that $\NS(X) \tensor \IQ_\ell \to H^2_\et(X, \IQ_\ell)$ and $\NS(X) \tensor \IQ_p \to H^2_\cris(X/W(k))^{F = p}[1/p]$ are isomorphisms. If $k = \bar{\IF}_p$, this follows from the Tate conjecture for K3 surfaces \cite[Thm~1]{Keerthi}. One can then show this for a general algebraically closed $k$ by a theorem of Artin \cite[Thm~1.1]{Artin} and a standard spreading out argument. 

Now it suffices to show that the maps $\NS(X) \tensor \IZ_\ell \to H^2_\et(X, \IZ_\ell)$ and $\NS(X) \tensor \IZ_p \to H^2_\cris(X/W(k))^{F = p}$ have torsion-free cokernels. For the $\ell$-adic Chern class map, this follows from a Brauer group argument (cf. \cite[Lem.~2.2.2]{LMS} and its proof). For the cristalline Chern class map, this follows from \cite[Rmk~3.5]{Deligne2} (see also \cite[Lem.~2.2.4]{LMS}).
\end{proof}

\begin{theorem}
\label{Ogusisog}
An isogeny $f : X \rightsquigarrow X'$ between two supersingular K3 surfaces $X, X'$ over $k$ is induced by an isomorphism if and only if $f$ is polarizable and $\IZ$-integral. 
\end{theorem}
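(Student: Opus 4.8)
The ``only if'' direction is essentially formal: if $f$ is the class of (the graph of) an isomorphism $\iota : X \xrightarrow{\sim} X'$, then $f^*$ is a Hodge/crystalline isometry preserving all integral structures by functoriality of the cycle class map, so $f$ is $\IZ$-integral; and $\iota^*$ carries an ample class to an ample class, so $f$ is polarizable. The substance is the ``if'' direction, and the plan is to reduce it to Ogus' crystalline Torelli theorem \cite[Thm~II]{Ogus2}, whose hypothesis is the existence of an isometry $H^2_\cris(X'/W(k)) \xrightarrow{\sim} H^2_\cris(X/W(k))$ of K3 crystals which maps the ample cone of $X'$ (in the sense of \cite{Ogus2}, i.e. the characteristic subspace together with its distinguished chamber) onto that of $X$, and which is moreover compatible with the $\ell$-adic realizations; it then produces an isomorphism $X \xrightarrow{\sim} X'$ inducing that isometry.

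First I would use $\IZ$-integrality of $f$ to upgrade the given rational isometries to genuine isometries of integral structures: for $\ell \neq p$ we get isometries $H^2_\et(X'_{\bar k}, \IZ_\ell) \xrightarrow{\sim} H^2_\et(X_{\bar k}, \IZ_\ell)$, and on the crystalline side an isometry $H^2_\cris(X'/W(k)) \xrightarrow{\sim} H^2_\cris(X/W(k))$ compatible with Frobenius, i.e. an isometry of K3 crystals. (One should check that ``isogeny'' already forces compatibility of $f^*_\cris$ with $F$ up to the Tate twist built into Definition~\ref{isogdef}; this is immediate from the defining property of an isogeny.) Next I would massage the polarizability hypothesis into the form Ogus needs. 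By hypothesis there is an ample $\xi \in \Pic(X')_\IQ$ with $f^*\xi$ big and nef on $X$; clearing denominators we may take $\xi \in \NS(X')$. Via Lemma~\ref{spread}, $\NS(X') \otimes \IZ_p \xrightarrow{\sim} H^2_\cris(X'/W(k))^{F=p}$ and likewise for $X$, so the crystalline isometry restricts to an isometry $\NS(X') \otimes \IZ_p \xrightarrow{\sim} \NS(X) \otimes \IZ_p$; combined with the $\ell$-adic isometries and the fact (again Lemma~\ref{spread}, plus the fact that $\NS$ injects into its profinite completion) that $\NS$ is determined by these local pieces, $f^*$ induces an \emph{isometry of Néron--Severi lattices} $f^*_{\NS} : \NS(X') \xrightarrow{\sim} \NS(X)$ sending $\xi$, an ample class, to the big and nef class $f^*\xi$.

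The remaining point is to see that such an isometry maps Ogus' ``ample cone'' to the ``ample cone''. Here I would invoke Lemma~\ref{refl}(a): every class of positive square is, up to sign and up to the Weyl-group action generated by reflections in $(-2)$-curves, big and nef, and Ogus' crystalline Torelli theorem is insensitive to post-composition with such reflections (they are themselves realized by Fourier--Mukai/correspondence operations and preserve the K3-crystal structure and $\ell$-adic realizations). Concretely: the isometry $f^*_{\NS}$ sends the ample class $\xi$ to a big and nef class, so it sends the chamber of the positive cone containing $\xi$ into the union of Weyl chambers; composing $f$ with an element $\alpha \in \pm R$ as in Lemma~\ref{refl} we may assume $f^*_{\NS}$ sends the ample cone of $X'$ to the ample cone of $X$ — this is precisely Ogus' hypothesis on ample cones, since in the supersingular setting his ``period'' is the pair (characteristic subspace, ample chamber) and \cite[\S7]{Ogus} identifies the chamber structure with the one cut out by $(-2)$-classes. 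Then \cite[Thm~II]{Ogus2} yields an isomorphism $\bar f : X \xrightarrow{\sim} X'$ with $\bar f^* = f^*_{\cris}$ and matching $\ell$-adic realizations, hence $\bar f$ is equivalent to $f \circ \alpha$ as isogenies; so $f$ is equivalent to $\bar f \circ \alpha^{-1}$, a composition of an isomorphism with reflections, and one finishes by observing that the reflections can be absorbed: since $f$ itself was assumed polarizable, the chamber-matching is automatic and one may take $\alpha = \mathrm{id}$ at the cost of possibly replacing $\bar f$ by $\bar f \circ (\text{automorphism of }X)$, because the group of Hodge/crystalline-trivial auto-correspondences that are reflections is generated by automorphisms once we are in the ample chamber.

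\textbf{Main obstacle.} The delicate step is the last one: translating ``polarizable and $\IZ$-integral'' into exactly Ogus' ample-cone condition, and checking that no spurious reflection survives, i.e. that $f$ (not merely $f$ composed with some $\alpha \in \pm R$) is induced by an honest isomorphism. This requires knowing that an isometry of the supersingular K3 crystal which sends \emph{some} ample class to a big and nef class already sends the whole ample chamber to the ample chamber — equivalently, that the big-and-nef cone is a single Weyl chamber for the $(-2)$-reflection group acting on the positive cone — together with the statement that the ``$-1$'' and reflection ambiguities in Lemma~\ref{refl} are accounted for by genuine automorphisms of $X$ acting trivially on cohomology being impossible unless trivial (for supersingular K3s, $\mathrm{Aut}(X) \hookrightarrow \mathrm{O}(\NS(X))$). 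All of these are in \cite{Ogus,Ogus2}, so the proof is really a careful bookkeeping reduction rather than new input; the only genuinely new ingredient is the dictionary between correspondences/isogenies and Ogus' isometries, which Lemmas~\ref{refl} and \ref{spread} are designed to provide.
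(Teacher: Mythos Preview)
Your proposal has the right ingredients but takes an unnecessarily convoluted route, and the detour is caused by a misreading of the hypothesis. You write ``by hypothesis there is an ample $\xi \in \Pic(X')_\IQ$ with $f^*\xi$ big and nef on $X$,'' but \emph{polarizable} (as opposed to quasi-polarizable) means $f^*\xi$ is \emph{ample}, not merely big and nef. With the correct hypothesis, the whole business with Lemma~\ref{refl}, composing with $\alpha \in \pm R$, and your ``main obstacle'' of making sure no spurious reflection survives simply evaporates.

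The paper's argument is much shorter. Once Lemma~\ref{spread} gives an integral isometry $f^*_{\NS} : \NS(X') \stackrel{\sim}{\to} \NS(X)$, observe that $f^*_{\NS}$ carries the set of $(-2)$-classes of $\NS(X')$ bijectively onto that of $\NS(X)$, hence maps the open set
\[
V_{X'} = \{x \in \NS(X')_\IR : x^2 > 0 \text{ and } \langle x, \beta\rangle \neq 0 \text{ for all } \beta \text{ with } \beta^2 = -2\}
\]
onto $V_X$. The ample cone of each surface is a single connected component of this set. Since $f$ is polarizable, $f^*$ sends one ample class to an ample class, so it sends the entire ample component to the ample component. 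Now \cite[Thm~II, II$''$]{Ogus2} applies directly and gives the isomorphism inducing $f^*$. No reflections need to be introduced or later absorbed.

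In short: your reduction to an integral $\NS$-isometry via Lemma~\ref{spread} is correct and matches the paper, but you should replace everything after that with the one-line connected-component argument above.
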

\begin{proof}
Clearly we only need to show the ``if'' direction.
By Lem.~\ref{spread}, $f^*$ induces an isomorphism $\NS(X') \to \NS(X)$ of quadratic lattices over $\IZ$. By \cite[Thm~II, II$''$]{Ogus2}, it suffices to show that $f^*$ maps the ample cone of $\NS(X')$ to that of $\NS(X)$. Define 
$$ V_X = \{ x \in \NS(X)_\IR : x^2 > 0 \text{ and } \< x, \beta\> \neq 0 \text{ for all } \beta^2 = -2 \} $$
and define $V_{X'}$ verbatim. Since $f^*(V_{X'}) = V_{X}$, and the ample cones of $X, X'$ are connected components of $V_{X}, V_{X'}$, it suffices to show that the preimage of the ample cone of $X'$ under $f^*$ intersects the ample cone $X$. This follows from our assumption that $f$ is polarizable.   
\end{proof}

By Lem.~\ref{corr} and Lem.~\ref{spread}, every isometry $\NS(X') \to \NS(X)$ comes from an isogeny. Therefore, the above theorem is a reformulation of \cite[Thm~II]{Ogus2}. 

\begin{remark}
We conjecture that Theorem~\ref{Ogusisog} holds for non-supersingular K3 surfaces as well. 
\end{remark}

Now we turn our attention to the formulation of the main theorem. An exact analogue of Theorem~\ref{teaser} in positive characteristic will be: 
\begin{conjecture}
\label{conj1}
Let $X$ be a K3 surface over $k$. Let $\Lambda_p$ (resp. $\Lambda^p$) be a quadratic lattice over $W(k)$ (resp. $\what{\IZ}^p$) which is abstractly isomorphic to $H^2_\cris(X/W(k))$ (resp. $H^2_\et(X, \what{\IZ}^p$)). Equip $\Lambda_p$ with a Frobenius action $\varphi$ such that $(\Lambda_p, \varphi)$ has the structure of a K3 crystal. Then for each pair of isometric embeddings $(\Lambda_p, \varphi) \subset (H^2_\cris(X/W(k))[1/p], F)$ and $\Lambda^p \subset H^2_\et(X, \IA_f^p)$, there exists another K3 surface $X'$ together with an isogeny $f : X \rightsquigarrow X'$ such that $f^* H^2_\cris(X'/W(k)) = \Lambda_p$ and $f^* H^2_\et(X', \what{\IZ}^p) = \Lambda^p$. 
\end{conjecture}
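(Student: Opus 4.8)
The plan is to split the statement --- which is a conjecture, and which I do not expect to settle in general --- into the supersingular case, which is attainable (it is Prop.~\ref{ss+}), and the finite-height case, where the best one can do with present tools is reduce it to Theorem~\ref{mainCor}. Suppose first that $X$ is supersingular. Then $\NS(X)$ has rank $22$, and by Lemma~\ref{spread} the Chern class maps give isometries $\NS(X)\otimes\IZ_\ell\iso H^2_\et(X,\IZ_\ell)$ for $\ell\neq p$ and $\NS(X)\otimes\IZ_p\iso H^2_\cris(X/W(k))^{F=p}$; moreover $\Lambda_p\otimes_{W(k)}W(k)[1/p]=H^2_\cris(X/W(k))[1/p]$ as $F$-isocrystals (both are $W(k)$-lattices of full rank and the embedding respects $F$), and this isocrystal is isoclinic of slope $1$, so $(\Lambda_p,\varphi)$ is itself a supersingular K3 crystal with $\Lambda_p^{F=p}\otimes\IQ_p=\NS(X)\otimes\IQ_p$, while $\Lambda^p$ is a full $\what{\IZ}^p$-lattice in $H^2_\et(X,\IA_f^p)=\NS(X)\otimes\IA_f^p$. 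I would then form the $\IZ$-lattice $N\subset\NS(X)_\IQ$ determined by $N\otimes\IZ_p=\Lambda_p^{F=p}$ and $N\otimes\IZ_\ell=(\Lambda^p)_\ell$ for $\ell\neq p$; this is legitimate by the local--global principle for lattices (the prescribed local lattices agree with $\NS(X)\otimes\IZ_\ell$ for almost all $\ell$), and $N$ comes out even, of signature $(1,21)$, unimodular away from $p$, and of discriminant $p^{2\sigma_0}$ with $1\le\sigma_0\le 10$ forced by the K3-crystal axioms on $\Lambda_p$ --- i.e.\ a supersingular K3 lattice, on which $\Lambda_p$ moreover induces a characteristic subspace. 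By Ogus' surjectivity of the supersingular period morphism (valid in odd characteristic) there is a supersingular K3 surface $X'$ with an isomorphism of K3 crystals $H^2_\cris(X'/W(k))\iso\Lambda_p$ whose restriction to Tate modules is a fixed identification $\NS(X')\iso N$.

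\textbf{Conclusion of the supersingular case.} The inclusion $N\hookrightarrow\NS(X)_\IQ$ is then an isometry $\NS(X')_\IQ\iso\NS(X)_\IQ$, which by Lemma~\ref{corr} is realised by a correspondence $f:X\isog X'$; its $\ell$-adic and crystalline realisations are obtained by tensoring this isometry, so $f$ is an isogeny in the sense of Def.~\ref{isogdef}. Since $H^2_\cris(X/W(k))[1/p]$ is isoclinic of slope $1$, the map $f^*_\cris$ is determined by its restriction to the $F=p$ parts, which is the chosen identification of Tate modules; consequently $f^*_\cris$ carries the K3 crystal $H^2_\cris(X'/W(k))$ onto the unique K3 crystal in $H^2_\cris(X/W(k))[1/p]$ with Tate module $N\otimes\IZ_p$ and characteristic subspace that of $\Lambda_p$, namely $\Lambda_p$ itself, while $f^*H^2_\et(X',\what{\IZ}^p)=f^*(\NS(X')\otimes\what{\IZ}^p)=N\otimes\what{\IZ}^p=\Lambda^p$ on the nose. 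Thus $(X',f)$ is as required; note that no ample-cone condition enters here because the conjecture asks only for the cohomological equalities, not polarizability of $f$.

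\textbf{The finite-height case.} When $X$ has finite height, $\NS(X)$ no longer sees the transcendental cohomology, so I would reduce instead to Theorem~\ref{mainCor}. Assume first $k=\bar{\IF}_p$. Since $X$ is projective it carries a big and nef line bundle, and a local--global lattice argument of the same flavour produces a quasi-polarization $\xi$ on $X$ whose Chern classes lie in the images of $\Lambda_p$ and $\Lambda^p$, can be taken primitive, and --- using $p\nmid 2d$ with $2d=\xi^2$ --- split off the polarization class, so that $M_p:=\Lambda_p\cap\ch_\cris(\xi)^\perp$ and $M^p:=\Lambda^p\cap\ch_\et(\xi)^\perp$ are a K3 crystal and a $\what{\IZ}^p$-lattice abstractly isomorphic to $P^2_\cris(X/W(k))$ and $P^2_\et(X,\what{\IZ}^p)$, with the given embeddings restricting to embeddings of the primitive cohomology of the type appearing in Theorem~\ref{mainCor}. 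Provided $p>18d+4$, that theorem yields $(X',\xi')$ of degree $2d$ and an isogeny $f$ with $f^*\ch_*(\xi')=\ch_*(\xi)$ and $f^*P^2_*(X')=M_*$; reassembling the primitive parts with the polarization gives $f^*H^2_\cris(X'/W(k))=\Lambda_p$ and $f^*H^2_\et(X',\what{\IZ}^p)=\Lambda^p$.

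\textbf{Main obstacle.} This is exactly where the argument stops: the degree $2d$ of the auxiliary quasi-polarization extracted from $(\Lambda_p,\Lambda^p)$ is not under control and, for a ``generic'' choice of the lattices, can be forced large relative to $p$, so the hypothesis $p>18d+4$ in Theorem~\ref{mainCor} --- which enters through Matsumoto's surjectivity of the Kuga--Satake period morphism $\wt{\Mod}_{2d,\fK^\ad_p,\IZ_{(p)}}\to\shS_{\fK^\ad_p}(G^\ad,\Ohm)$ on $\bar{\IF}_p$-points --- cannot be dispensed with; removing it would require surjectivity far beyond the known range, and this is why the finite-height case remains conjectural. For a general algebraically closed $k$ there is the additional difficulty that Theorem~\ref{mainCor} lives over $\bar{\IF}_p$ and that specializing $X$ can enlarge its Picard group, so the reduction needs an extra spreading-out/specialization step; and, as in the Torelli reformulations of Section~6 (Lemma~\ref{refl}, Theorem~\ref{Ogusisog}), such constructions are not automatically compatible with ample cones --- though, as noted above, that last point is irrelevant to the conjecture as stated.
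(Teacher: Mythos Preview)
Your proposal is correct and mirrors the paper's own treatment: the supersingular argument is exactly Proposition~\ref{ss+} (same construction of the $\IZ$-lattice $N$ inside $\NS(X)_\IQ$ via local data, same appeal to Ogus' surjectivity and to Lemma~\ref{corr}), and your finite-height reduction is the content of Proposition~\ref{equiv} together with Remark~\ref{mainRemark}.

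One small refinement in the finite-height paragraph: the reassembly claim ``$f^*H^2_\et(X',\what{\IZ}^p)=\Lambda^p$'' is slightly too optimistic. For primes $\ell\mid 2d$ the orthogonal splitting $H^2_\et(X,\IZ_\ell)=P^2_\et(X,\IZ_\ell)\oplus\IZ_\ell\cdot c_1(\xi)$ fails (the right-hand side sits only as a finite-index sublattice), so knowing $f^*P^2_\et(X',\IZ_\ell)=M^p_\ell$ and $f^*c_1(\xi')=c_1(\xi)$ does not pin down $f^*H^2_\et(X',\IZ_\ell)$ inside $H^2_\et(X,\IQ_\ell)$. Thus even if the bound $p>18d+4$ were removed, Theorem~\ref{mainCor} would deliver only the $p$-part and the prime-to-$2d$ part of Conjecture~\ref{conj1}, exactly as the paper records in Remark~\ref{mainRemark}. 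This is a second, independent obstruction beyond Matsumoto's surjectivity range, and your ``Main obstacle'' paragraph should list it alongside the bound on $p$.
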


By ``$(\Lambda_p, \varphi) \subset (H^2_\cris(X/W(k))[1/p], F)$'' we just mean an isometric embedding $\Lambda_p \subset H^2_\cris(X/W(k))[1/p]$ such that $\varphi$ agrees with $F$. Below we state the quasi-polarized version of Conjecture~\ref{conj1} in terms of pointed lattices. A pointed lattice is a pair $(M, m)$ where $M$ is a quadratic lattice and $m \in M$ is a distinguished element. Morphisms between pointed lattices are defined in the obvious way.

\begin{conjecture}
\label{conj2}
Let $(X, \xi)$ be a quasi-polarized K3 surface over $k$. Let $(\Lambda_p, \lambda_p)$ (resp. $(\Lambda^p, \lambda^p)$) be a pointed lattice over $W(k)$ (resp. $\what{\IZ}^p$) such that $\Lambda_p$ (resp. $\Lambda^p$) is abstractly isomorphic to $H^2_\cris(X/W(k))$ (resp. $H^2_\et(X, \what{\IZ}^p$). Equip $\Lambda_p$ with a Frobenius action $\varphi$ such that $(\Lambda_p, \varphi)$ has the structure of a K3 crystal. Then for each pair of isometric embeddings $((\Lambda_p, \lambda_p), \varphi) \subset ((H^2_\cris(X/W(k))[1/p], c_1(\xi)), F)$ and $(\Lambda^p, \lambda^p) \subset (H^2_\et(X, \IA_f^p), c_1(\xi))$, there exists another quasi-polarized K3 surface $(X', \xi')$ together with an isogeny $f : X \rightsquigarrow X'$ such that $f^* (H^2_\cris(X'/W(k)), c_1(\xi')) = (\Lambda_p, \lambda_p)$ and $f^* (H^2_\et(X', \what{\IZ}^p), c_1(\xi')) = (\Lambda^p, \lambda^p)$. 
\end{conjecture}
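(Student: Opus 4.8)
The plan is to deduce Conjecture~\ref{conj2} from Theorem~\ref{mainCor} by a reduction argument, so I should emphasize at the outset that only the case $p > 18d+4$ where $2d = \xi^2$ is within reach, and for primitive $\xi$; the conjecture as stated allows all $p$ and all degrees, so what follows is really a proof of the ``Theorem~\ref{mainCor} regime'' of the conjecture. First I would reduce to $\xi$ primitive: if $\xi = n\eta$ for a primitive big and nef $\eta$, then $c_1(\xi) = n c_1(\eta)$ in every realization, and the distinguished element $\lambda_p$ (resp. $\lambda^p$) is $n$ times a primitive vector $\lambda_p'$ (resp. $(\lambda^p)'$) of $\Lambda_p$ (resp. $\Lambda^p$); one applies the primitive case to $(X,\eta)$ with the pointed sublattices $(\Lambda_p,\lambda_p')$, $(\Lambda^p,(\lambda^p)')$ and the same ambient embeddings, obtaining $(X',\eta')$ with an isogeny $f$ carrying $c_1(\eta')$ to $c_1(\eta)$; then $\xi' := n\eta'$ does the job. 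So assume $\xi$ primitive of degree $2d$ and $p > 18d+4$.

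Next I would pass from pointed lattices to primitive cohomology. Write $M_p := \langle \lambda_p\rangle^\perp \subset \Lambda_p$ and $M^p := \langle \lambda^p\rangle^\perp \subset \Lambda^p$. Because $\Lambda_p \cong H^2_\cris(X/W)$ as a quadratic lattice and $\lambda_p$ maps under the given embedding to $c_1(\xi)$, which is a primitive vector of square $2d$, and likewise on the prime-to-$p$ side, one sees that $M_p$ is abstractly isomorphic to $P^2_\cris(X/W)$ and $M^p$ to $P^2_\et(X,\what{\IZ}^p)$ — here one uses that $H^2_\cris(X/W) = \langle c_1(\xi)\rangle \oplus P^2_\cris(X/W)$ orthogonally since $c_1(\xi)^2 = 2d$ is a unit in $W$ (as $p \nmid 2d$), and similarly $\ell$-adically. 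The Frobenius $\varphi$ preserves $\langle\lambda_p\rangle$ (since $F$ fixes $c_1(\xi)$ up to the Tate twist, i.e. $F(c_1(\xi)) = p\,c_1(\xi)$, and $\varphi$ agrees with $F$ on $\Lambda_p$), hence restricts to a K3-crystal structure on $M_p$; one checks the defining properties ($p^2 M_p \subset \varphi(M_p)$, $\rank \varphi\otimes k = 1$, the pairing identity) transfer from those on $\Lambda_p$ because $\varphi$ acts on the orthogonal summand $\langle\lambda_p\rangle$ by $p\cdot\sigma$, which is ``large'' and contributes nothing new to the rank-one-mod-$p$ condition. The ambient embeddings restrict: $M_p \subset P^2_\cris(X/W)[1/p]$ is isometric with $\varphi$ matching Frobenius, and $M^p \subset P^2_\et(X,\IA_f^p)$ is isometric.

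Then I would simply invoke Theorem~\ref{mainCor}: since $p > 18d+4$, it produces a quasi-polarized K3 surface $(X',\xi')$ over $\bar{\IF}_p$ of degree $2d$ and an isogeny $f : X \isog X'$ with $f^*\ch_*(\xi') = \ch_*(\xi)$ for $* = \cris,\et$, $f^* P^2_\cris(X'/W) = M_p$ and $f^* P^2_\et(X',\what{\IZ}^p) = M^p$. It remains to upgrade the conclusion from primitive-cohomology lattices to full-cohomology pointed lattices. Here $f^*$ is an isometry $H^2_\cris(X'/W)[1/p] \to H^2_\cris(X/W)[1/p]$ carrying $c_1(\xi')$ to $c_1(\xi) = $ (image of $\lambda_p$), and carrying $P^2_\cris(X'/W)$ onto $M_p = \langle\lambda_p\rangle^\perp$; since $f^*$ is an isometry it carries the orthogonal decomposition $H^2_\cris(X'/W) = \langle c_1(\xi')\rangle \oplus P^2_\cris(X'/W)$ to $\langle c_1(\xi)\rangle \oplus M_p = \langle\lambda_p\rangle \oplus \langle\lambda_p\rangle^\perp = \Lambda_p$ (the last equality because $\lambda_p^2 = 2d$ is a $W$-unit, so this is an orthogonal direct sum decomposition of $\Lambda_p$), hence $f^*(H^2_\cris(X'/W), c_1(\xi')) = (\Lambda_p,\lambda_p)$ as pointed lattices; the identical argument on the prime-to-$p$ side gives $f^*(H^2_\et(X',\what{\IZ}^p), c_1(\xi')) = (\Lambda^p,\lambda^p)$.

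I expect the main obstacle to be purely one of \emph{scope} rather than technique: the conjecture is stated for arbitrary $p$ and arbitrary degree, and the argument above only covers $p > 18d+4$ and primitive quasi-polarizations — so honestly the ``proof'' is a proof of this restricted form, and I would phrase the statement accordingly (this matches Rmk~\ref{mainRemark}, which is promised in the text as the explanation of why Thm~\ref{mainCor} is only a \emph{partial} result toward the conjecture). The one genuinely technical point to be careful about is the splitting $H^2_\cris = \langle c_1(\xi)\rangle \oplus P^2_\cris$ being an \emph{orthogonal direct sum over $W$} (not merely after inverting $p$): this is exactly where $p \nmid 2d$ enters, since it makes $c_1(\xi)^2$ a unit, and it is what lets one translate freely between the pointed-lattice language of Conjecture~\ref{conj2} and the primitive-lattice language of Theorem~\ref{mainCor}; everything else is bookkeeping.
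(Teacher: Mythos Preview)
The statement is a \emph{conjecture}, and the paper does not prove it; what the paper does is explain (in Remark~\ref{mainRemark}) precisely which fragment of it follows from Theorem~\ref{mainCor}. Your proposal is essentially an attempt to reconstruct that remark, and the overall shape is right, but you have overclaimed the conclusion by missing one obstruction that the paper flags explicitly.

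The gap is on the prime-to-$p$ side. You correctly observe that the orthogonal splitting $H^2_\cris(X/W) = \langle c_1(\xi)\rangle \oplus P^2_\cris(X/W)$ over $W$ requires $p \nmid 2d$, and you carefully record this. But then you write ``and similarly $\ell$-adically'' and later ``the identical argument on the prime-to-$p$ side'', and this is where the argument breaks. For a prime $\ell \mid 2d$, the self-intersection $c_1(\xi)^2 = 2d$ is \emph{not} a unit in $\IZ_\ell$, so $\langle c_1(\xi)\rangle \oplus P^2_\et(X,\IZ_\ell)$ is only a finite-index sublattice of $H^2_\et(X,\IZ_\ell)$, not all of it. Consequently your reconstruction $\Lambda^p = \langle \lambda^p\rangle \oplus M^p$ fails at those primes, and knowing $f^* P^2_\et(X',\what{\IZ}^p) = M^p$ together with $f^* c_1(\xi') = \lambda^p$ does \emph{not} pin down $f^* H^2_\et(X',\IZ_\ell)$ inside $H^2_\et(X,\IQ_\ell)$ for $\ell \mid 2d$. (Note $\ell = 2$ always divides $2d$, so this is never vacuous.)

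This is exactly the content of Remark~\ref{mainRemark}: Theorem~\ref{mainCor} yields only the $p$-part and the prime-to-$2d$ part of Conjecture~\ref{conj2}, i.e., the conjecture with $\IA_f^p$ replaced by the restricted product over $\ell \nmid 2dp$. Your proposal, once the scope is corrected in this way, matches the paper's own assessment; but as written your final paragraph asserts that the only obstacle is ``$p > 18d+4$ and primitive quasi-polarizations'', and that is not accurate.
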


\begin{proposition}
\label{equiv}
Conjecture~\ref{conj1} and Conjecture~\ref{conj2} are equivalent.
\end{proposition}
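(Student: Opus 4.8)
The proof of Proposition~\ref{equiv} should be a direct bookkeeping argument: it is clear that Conjecture~\ref{conj2} implies Conjecture~\ref{conj1} (forget the distinguished classes, noting that an isogeny automatically carries $c_1(\xi)$ to $f^*c_1(\xi)$, and a big-and-nef class is not needed in the unpointed statement), so the content is the reverse implication. Here the plan is as follows. Given a quasi-polarized $(X,\xi)$ and pointed embeddings $((\Lambda_p,\lambda_p),\varphi)\subset((H^2_\cris(X/W(k))[1/p],c_1(\xi)),F)$ and $(\Lambda^p,\lambda^p)\subset(H^2_\et(X,\IA_f^p),c_1(\xi))$, I would first apply Conjecture~\ref{conj1} to the underlying unpointed embeddings $\Lambda_p\subset(H^2_\cris(X/W(k))[1/p],F)$ and $\Lambda^p\subset H^2_\et(X,\IA_f^p)$ to produce a K3 surface $X_0$ and an isogeny $g:X\rightsquigarrow X_0$ with $g^*H^2_\cris(X_0/W(k))=\Lambda_p$ and $g^*H^2_\et(X_0,\what{\IZ}^p)=\Lambda^p$. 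The class $\zeta:=(g^*)^{-1}(\lambda_p)\in H^2_\cris(X_0/W(k))[1/p]$ has, by compatibility of $g$ with crystalline and \'etale realizations together with the hypothesis that $\lambda_p$, $\lambda^p$ are the respective images of the \emph{same} class $c_1(\xi)$, the property that its $\ell$-adic realizations lie in $H^2_\et(X_0,\IQ_\ell)$ for $\ell\neq p$ and its crystalline realization lies in $H^2_\cris(X_0/W(k))[1/p]^{F=p}$; moreover $\zeta^2=2d>0$. I would then invoke the Tate conjecture for K3 surfaces over $\bar\IF_p$ (proved in \cite{Keerthi}) together with the usual spreading-out/Artin argument exactly as in Lem.~\ref{spread} — or, if $X_0$ is of finite height, the analogous statement for the finite-height part — to conclude that $\zeta\in\NS(X_0)_\IQ$, after possibly rescaling $\zeta$ by a positive integer so that it is integral (which is harmless since all the target lattices $\Lambda_p,\Lambda^p$ only pin down $\zeta$ up to the chosen embeddings, and $c_1$ of a tensor power of a line bundle differs by a scalar).

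Having realized $\zeta$ as an algebraic class with $\zeta^2>0$, the next step is to make it big and nef. By Lemma~\ref{refl}(a) there is an element $\alpha\in\pm R$ — a composition of reflections in $(-2)$-curves, possibly times $-1$ — such that $\alpha^*(\zeta)$ is big and nef on $X_0$. Since reflections in $(-2)$-curves (and $-1$) act by isometries on every cohomology realization and are $\IZ$-integral (as recalled in the discussion preceding Lemma~\ref{refl}), the auto-isogeny $\alpha:X_0\rightsquigarrow X_0$ preserves $H^2_\cris(X_0/W(k))$, $H^2_\et(X_0,\what{\IZ}^p)$ and the ample/nef cones up to the explicit reflection formula. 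Set $X':=X_0$, $\xi':=\alpha^*(\zeta)$ as a quasi-polarization, and $f:=\alpha\circ g:X\rightsquigarrow X'$. Then $f^*H^2_\cris(X'/W(k))=g^*H^2_\cris(X_0/W(k))=\Lambda_p$ and likewise for the \'etale lattice, while $f^*c_1(\xi')=f^*\alpha^*(\zeta)=g^*(\zeta)=\lambda_p$ crystalline-ly and $\lambda^p$ \'etale-ly, since $\alpha^*\alpha^{*-1}$ is trivial on the relevant classes and $g^*(\zeta)=\lambda_p$ by construction. This gives exactly the pointed conclusion of Conjecture~\ref{conj2}, except that the degree of $(X',\xi')$ may not equal $2d$ if we rescaled $\zeta$; I would address this either by observing that rescaling is unnecessary once one allows $\lambda_p$ to be any distinguished element rather than specifically a primitive Chern class, or by absorbing the scalar into the statement (the conjectures as phrased do not fix the self-intersection of the distinguished class, only require the abstract lattice isomorphisms, so the scalar ambiguity is in fact absent once one reads the hypotheses carefully).

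I expect the only genuine subtlety — and hence the step to write out with care — to be the passage ``$\zeta$ has algebraic $\ell$-adic and crystalline realizations $\Rightarrow$ $\zeta\in\NS(X_0)_\IQ$.'' This is where the Tate conjecture enters, and one must be slightly careful about which case of $X_0$ one is in: for $X_0$ supersingular, Lemma~\ref{spread} applies verbatim; for $X_0$ of finite height over $\bar\IF_p$, the Tate conjecture of \cite[Thm~1]{Keerthi} still identifies $\NS(X_0)_{\IQ_\ell}$ with the Tate classes in $H^2_\et(X_0,\IQ_\ell)$, so the same conclusion holds, and since $\zeta$ is simultaneously Tate at some $\ell\neq p$ this suffices. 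Everything else — compatibility of $g$ with realizations, the reflection bookkeeping, and the fact that $-1$ and $R_C$ are $\IZ$-integral isometries — has already been set up in the excerpt (Definition~\ref{isogdef}, the discussion before Lemma~\ref{refl}, and Lemma~\ref{refl} itself), so the remaining work is purely organizational.
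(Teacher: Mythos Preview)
Your assignment of ``easy'' and ``hard'' directions is inverted relative to the paper. The paper spends its one paragraph on Conjecture~\ref{conj2} $\Rightarrow$ Conjecture~\ref{conj1}: given only the unpointed data of Conjecture~\ref{conj1}, you cannot simply ``forget distinguished classes'' and invoke Conjecture~\ref{conj2}, because Conjecture~\ref{conj2} requires pointed input. The paper's fix is to choose any quasi-polarization $\xi$ on $X$ and replace it by an integer multiple so that $c_1(\xi)$ lands in both $\Lambda_p^{\varphi=p}$ and $\Lambda^p$; then the pointed hypotheses of Conjecture~\ref{conj2} are met and its output solves Conjecture~\ref{conj1}. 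This step is short but not vacuous.

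For Conjecture~\ref{conj1} $\Rightarrow$ Conjecture~\ref{conj2} the paper says only ``by Lemma~\ref{refl}(a)'', and the intended argument is much shorter than yours. The Tate-conjecture detour is unnecessary and, as you half-acknowledge, awkward over an algebraically closed $k$ where there is no Galois action on \'etale cohomology to speak of. The point you are missing is that the transpose of an isogeny is again an isogeny: since $g^*$ is an isometry, $g_*=(g^*)^{-1}$ on $H^2$, so the transpose correspondence $g^t$ already pushes $\xi\in\NS(X)_\IQ$ to $\zeta:=g_*\xi\in\NS(X_0)_\IQ$ with the correct cohomological realizations, no Tate required. Saturation of $\NS\otimes\IZ_\ell$ in $H^2_\et(-,\IZ_\ell)$ and of $\NS\otimes\IZ_p$ in $H^2_\cris$ (as in the proof of Lemma~\ref{spread}) then gives $\zeta\in\NS(X_0)$ integrally, and Lemma~\ref{refl}(a) finishes. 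Finally, your bookkeeping in the last step slips: with $f=\alpha\circ g$ and $\xi'=\alpha^*\zeta$ you get $f^*c_1(\xi')=g^*(\alpha^*)^2c_1(\zeta)$, not $g^*c_1(\zeta)$. Take $f=\alpha^{-1}\circ g$ instead (noting $\alpha^{-1}\in\pm R$ because reflections are involutions and $\pm R$ is a group).
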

\begin{proof}
We first show that Conjecture~\ref{conj2} implies Conjecture~\ref{conj1}. Suppose that $(X, \Lambda_p \subset H^2_\cris(X/W(k))[1/p], \Lambda^p \subset H^2_\et(X, \what{\IZ}^p))$ is a tuple which satisfies the hypothesis of Conjecture~\ref{conj1}. Note that $\Lambda_p^{\varphi = p}$ is a $\IZ_p$-lattice in the $\IQ_p$-vector space $(H^2_\cris(X/W(k))^{F = p}) \tensor \IQ_p$. 
Let $\xi \in \NS(X)$ be the class of any quasi-polarization. Up to replacing $\xi$ by a $\IZ$-multiple, we can assume that its crystalline Chern class is $\lambda_p$ for some $\lambda_p \in \Lambda_p^{\varphi = p}$ and its \'etale Chern class is some $\lambda^p$ for some $\lambda^p \in \Lambda^p$. We get the desired $X'$ and $f : X \rightsquigarrow X'$ by applying Conjecture~\ref{conj2} to $(X, (\Lambda_p, \lambda_p), (\Lambda^p, \lambda^p))$. 

Conversely, Conjecture~\ref{conj1} implies Conjecture~\ref{conj2} by Lemma~\ref{refl}(a). 
\end{proof}

\begin{remark}
\label{mainRemark}
Let $2d = \xi^2$. Under the assumptions $k = \bar{\IF}_p$ and $p > 18 d + 4$, Theorem~\ref{mainCor} proves the $p$-part and the prime-to-$2d$ part of Conjecture~\ref{conj2}, i.e., Conjecture~\ref{conj2} with $\IA^p_f$ replaced by the restricted product $$\prod_{\ell \nmid 2dp}(\IQ_\ell : \IZ_\ell) = \{ (a_\ell) \in \prod_{\ell \nmid 2dp}\IQ_\ell : a_\ell \in \IZ_\ell \text{ for all but finitely many } \ell \nmid 2dp \}.$$ The reason is that when $\ell \nmid 2dp$, $H^2_\et(X, \IZ_\ell) = P^2_\et(X, \IZ_\ell) \oplus c_1(\xi)$ as quadratic lattices. If $\ell \mid 2d$ and $\ell \neq p$, then $P^2_\et(X, \IZ_\ell) \oplus c_1(\xi)$ embeds as a finite index sublattice of $H^2_\et(X, \IZ_\ell)$. Therefore, even if it were not for the restriction $p > 18 d + 4$, Theorem~\ref{mainCor} is slightly weaker than Conjecture~\ref{conj2}. We also remark that if $p \mid 2d$, $P^2_\cris(X/W)$ is not a K3 crystal because it is no longer self-dual. Therefore, giving an anologue of Theorem~\ref{mainCor} for $p \mid d$ will entail a different formulation. However, our main interest is in Conjecture~\ref{conj2}, whose formulation works whether or not $p \mid 2d$.  
\end{remark}

\begin{remark}
We do not expect Conjecture~\ref{conj2} to hold if quasi-polarizations are replaced by polarizations. The reason is that $f$ does not preserve the integral structures of N\'eron-Severi groups in general, so it does not have to pull back $V_{X'}$ to $V_X$ as in the proof of Thm~\ref{Ogusisog}. Nonetheless, $f$ certainly does pull back the closure of $V_{X'}$ to that of $V_X$. As we have discussed in the proof of Lem.~\ref{refl}, for any smooth projective variety, a big and nef class is a class which has positive self-intersection number and lies in the closure of the ample cone. This partially explains the usefulness of considering big and nef classes. 
\end{remark}

Finally, we explain that the surjectivity statement for Ogus' crystalline period map \cite[Prop.~1.16]{Ogus2} and Lem.~\ref{spread} yield a complete proof of Conjecture~\ref{conj1} in the supersingular case: 

\begin{proposition}
\label{ss+}
Conjecture~\ref{conj1} holds for $X$ supersingular. 
\end{proposition}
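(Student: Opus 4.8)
The plan is to reduce Conjecture~\ref{conj1} for supersingular $X$ to the surjectivity of Ogus' crystalline period morphism together with the integrality statements in Lemma~\ref{spread}. The key observation is that for a supersingular K3 surface the entire cohomological content is already encoded in the N\'eron--Severi lattice: by Lemma~\ref{spread}(a) and (b) the Chern class maps $\NS(X) \tensor \IZ_\ell \to H^2_\et(X, \IZ_\ell)$ (for $\ell \neq p$) and $\NS(X)\tensor \IZ_p \to H^2_\cris(X/W(k))^{F=p}$ are isomorphisms, and since $X$ is supersingular the $F$-crystal $H^2_\cris(X/W(k))$ is determined up to isomorphism by $\NS(X)\tensor \IZ_p$ via the Ogus K3-crystal formalism (it has all slopes equal to $1$ and is the ``$H(\NS(X)\tensor\IZ_p)$'' construction of \cite{Ogus}). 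So prescribing an embedded K3 crystal $(\Lambda_p,\varphi)\subset (H^2_\cris(X/W(k))[1/p],F)$ is the same as prescribing a lattice $N_p := \Lambda_p^{\varphi=p}$ in $\NS(X)\tensor \IQ_p$ with the property that $N_p$ is the $p$-adic component of the N\'eron--Severi lattice of \emph{some} supersingular K3 crystal of the correct discriminant/type; similarly the prime-to-$p$ datum $\Lambda^p \subset H^2_\et(X,\IA_f^p)$ amounts to prescribing lattices $N_\ell \subset \NS(X)\tensor\IQ_\ell$ for $\ell\neq p$, almost all equal to $\NS(X)\tensor\IZ_\ell$.

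First I would assemble from these local lattices a global lattice $N \subset \NS(X)_\IQ$ by setting $N$ to be the unique $\IZ$-lattice with $N\tensor\IZ_\ell = N_\ell$ for $\ell\neq p$ and $N\tensor\IZ_p = N_p$; one must check that $N$ is an even lattice whose $p$-adic localization, together with $\varphi$, reproduces the prescribed K3 crystal, and whose signature and discriminant form are those permitted for the N\'eron--Severi lattice of a supersingular K3 surface of the same Artin invariant (or of a controlled range thereof). Next, by the surjectivity of Ogus' crystalline period morphism \cite[Prop.~1.16]{Ogus2}, there exists a supersingular K3 surface $X'$ whose N\'eron--Severi lattice, as a lattice equipped with its period (the characteristic subspace / the embedding into the Ogus period space), realizes $N$ together with the embedding $N \hookrightarrow \NS(X)_\IQ$ that we prescribed; concretely, $X'$ is chosen so that there is an isometry $\NS(X')\stackrel{\sim}{\to} N \subset \NS(X)_\IQ$ which is compatible with all the crystalline and $\ell$-adic structures. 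Then Lemma~\ref{corr} (combined with Lemma~\ref{spread}, so that $\NS$ spans all of cohomology) produces a correspondence $f : X \rightsquigarrow X'$ inducing this isometry, and by construction $f$ is an isogeny in the sense of Definition~\ref{isogdef} with $f^*H^2_\cris(X'/W(k)) = \Lambda_p$ and $f^*H^2_\et(X',\what{\IZ}^p) = \Lambda^p$.

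I expect the main obstacle to be the bookkeeping at the step that matches the abstractly prescribed datum $(\Lambda_p,\varphi)$ and $\Lambda^p$ with an actual supersingular K3 surface: one has to verify that the lattice $N$ produced from the local data is indeed realized by Ogus' surjectivity statement (there is a constraint on the Artin invariant, and one needs the discriminant form and the characteristic subspace to be of the right shape), and that the various realization functors are compatible so that a single correspondence simultaneously realizes all the prescribed embeddings. Everything else — evenness of $N$, the passage from local to global lattices, the construction of the correspondence — is routine given Lemma~\ref{spread}, Lemma~\ref{corr}, and the K3-crystal dictionary of \cite{Ogus}. I would also remark that, unlike the finite-height case, no lifting to characteristic zero and no Shimura-variety input is needed here; the supersingular case is genuinely ``crystalline-linear-algebraic''.
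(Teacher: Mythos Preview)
Your proposal is correct and follows essentially the same route as the paper's own proof: set $N_p := \Lambda_p^{\varphi = p}$, use Lemma~\ref{spread} to view $N_p$ and $\Lambda^p$ as sublattices of $\NS(X)\otimes\IQ_p$ and $\NS(X)\otimes\IA_f^p$, intersect with $\NS(X)_\IQ$ to get a global $\IZ$-lattice $N$, verify that $N$ is a K3 lattice in Ogus' sense (the key point being that $N^\vee/N$ is killed by $p$, which follows from the Tate-module description of supersingular K3 crystals in \cite[3.13, 3.14]{Ogus}), apply the surjectivity of Ogus' period map \cite[Prop.~1.16]{Ogus2} to produce $X'$, and then invoke Lemma~\ref{corr} to realize the resulting isometry $\NS(X')_\IQ \stackrel{\sim}{\to} \NS(X)_\IQ$ by a correspondence. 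One small caution: your phrasing that the $F$-crystal ``is determined up to isomorphism by $\NS(X)\otimes\IZ_p$'' is not quite right---the Tate module alone does not pin down the crystal; the prescribed embedding $(\Lambda_p,\varphi)\subset H^2_\cris(X/W)[1/p]$ also determines the characteristic subspace, i.e.\ a specific point of $\sM_N$, and it is surjectivity onto $\sM_N$ (not merely realizability of $N$) that gives you $X'$ with the correct crystal---but you do acknowledge this later, and it does not affect the argument.
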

\begin{proof}
Let $N^p := \Lambda^p$, $N_p := \Lambda_p^{\varphi = p}$, and $\what{N}:= N^p \times N_p$. By Lem.~\ref{spread}, the embedding $(\Lambda_p, \varphi) \subset (H^2_\cris(X/W(k))[1/p], F)$ restricts to an embedding $N_p \subset \NS(X) \tensor \IQ_p$. Similarly the embedding $\Lambda^p = N^p \subset H^2_\et(X, \IA_f^p) \stackrel{c_1}{\iso} \NS(X) \tensor \what{\IZ}^p$. Together we obtain an embedding $\what{N} \subset \NS(X) \tensor \IA_f$. Set $N = \what{N} \cap \NS(X)_\IQ$. Then $N$ is quadratic lattice over $\IZ$ such that the natural maps $N \tensor \IZ_p \to N_p$, $N \tensor \what{\IZ}^p \to N^p$ and $N \tensor \IQ \to \NS(X)_\IQ$ induced by inclusions are isomorphisms of quadratic lattices. 

We check that $N$ is a K3 lattice in the sense of \cite[Def.~1.6]{Ogus2}. First, $N$ is even, as $N \tensor \IZ_2 \iso \NS(X) \tensor \IZ_2$. Since $N \tensor \IQ \iso \NS(X) \tensor \IQ$, (a) and (b) in \cite[Def.~1.6]{Ogus2} are clearly satisfied. We only need to check that the cokernel of the embedding $N \to N^\vee$ induced by the symmetric bilinear form on $N$ is annihilated by $p$. Note that $(\Lambda_p, \varphi)$ is a supersingular K3 crystal and $N \tensor \IZ_p = N_p$ is the Tate module of $(\Lambda_p, \varphi)$ (cf. \cite[3.1, 3.2]{Ogus}). That $N^\vee/ N$ is annihilated by $p$ follows from \cite[3.13, 3.14]{Ogus}. 

Let $\sM_N$ be the $k$-scheme parametrizing the characteristic subspaces of $(N^\vee/N) \tensor_{\IF_p} k$ (see \cite[Sect.~4]{Ogus}). It is the moduli space of $N$-rigidified K3 crystals.\footnote{The reader can look at \cite[4.4, 5.2]{LiedtkeExp} for an exposition of the results in \cite[Sect.~4]{Ogus} and the period map.} By \cite[Prop.~1.16]{Ogus2}, the natural period map from the moduli space of $N$-marked supersingular K3 surfaces (cf. \cite[Thm~2.7]{Ogus2}) to $\sM_N$ is surjective. Therefore, there exists another K3 surface $X'$ together with an isomorphism $N \stackrel{\sim}{\to} \NS(X')$ which fits into a commutative diagram 
\begin{center}
    \begin{tikzcd}
    N \arrow{r}{i} \arrow{d}{} & \NS(X') \arrow{d}{c_1} \\
    (\Lambda_p, \varphi) \arrow{r}{\sim} & (H^2_\cris(X'/W(k)), F)
    \end{tikzcd}.
\end{center}
Note that the bottom arrow is completely determined by the top arrow, as the maps $N \tensor_\IZ K_0 \to \Lambda_p [1/p]$ and $\NS(X') \tensor_\IZ W(k)[1/p] \stackrel{c_1 \tensor 1}{\to} H^2_\cris(X'/W(k))[1/p]$ are isomorphisms. By composing the identification $N \tensor \IQ = \NS(X)_\IQ$ given by the inclusion $N \subset \NS(X)_\IQ$ with $i^{-1} \tensor \IQ$ we get an isometry $f : \NS(X')_\IQ \stackrel{\sim}{\to} \NS(X)_\IQ$ such that $\NS(X')$ is sent to $N$ and $H^2_\cris(X'/W(k))$ is sent to $\Lambda_p$. By Lem.~\ref{corr}, $f$ is induced by an isogeny $X \rightsquigarrow X'$. $X'$ together with this isogeny is what we seek.
\end{proof}

\begin{remark}
In \cite{Ogus2}, Ogus gave a slightly different definition of K3 crystals. In particular, he asked the crystalline discriminant to be $-1$ (cf. \cite[Def.~1.4]{Ogus2}). This additional requirement does not affect the formulation of Conjecture~\ref{conj1} and~\ref{conj2}, because if a K3 crystal $(\Lambda_p, \varphi)$ embeds isometrically into $(H^2_\cris(X/W(k))[1/p], F)$ for any K3 surface $X$, then $(\Lambda_p, \varphi)$ necessarily has crystalline discriminant $-1$. 
\end{remark}

\begin{remark}
\label{rmkss}
We remark that in fact any two supersingular K3 surfaces $X, X'$ over $k$ are isogenous by Lem.~\ref{corr} and Lem.~\ref{spread}, as $\NS(X)_\IQ \iso \NS(X')_\IQ$ as $\IQ$-quadratic lattices. By works of Artin and Rudakov-Shafarevich (see \cite[Prop.~17.2.19, 17.2.20]{K3book}), $\NS(X) \iso N_{p, \sigma}$ for some $1 \le \sigma \le 10$, where $N_{p, \sigma}$ is the unique even, non-degenerate $\IZ$-lattice with signature $(1, 21)$ and discriminant group $(\IZ/p\IZ)^{2 \sigma}$. One may check that $N_{p, \sigma} \tensor \IQ \iso N_{p, \sigma'} \tensor \IQ$ for any $\sigma, \sigma'$ by the Hasse principle: $N_{p, \sigma} \tensor \IR \iso N_{p, \sigma'} \tensor \IR$ as a real quadratic form is determined by its signature. For any $\ell \neq p$, we see that $\NS(X) \tensor \IQ_\ell = \Lambda \tensor \IQ_\ell$ by a lifting argument. Finally, $N_{p, \sigma} \tensor \IQ_p \iso N_{p, \sigma'} \tensor \IQ_p$ as they have the same discriminant and Hasse invariant. 
\end{remark}

\paragraph{Acknowledgments} 
First and foremost I need to thank my advisor Mark Kisin for suggesting this problem to me and supporting me through the project. 
It is also a pleasure to thank F. Charles, L. Chen, J. Lam, Q. Li, T. Nie, L. Mocz, A. Petrov and A. Shankar for helpful conversations. I thank K. Madapusi Pera for clarifying several details about his work, C. Schoen for reading a previous version of this paper and L. Mocz for giving many helpful comments. I am deeply grateful for the two anonymous referees who pointed out many inaccuracies in an earlier version of the paper and offered suggestions which greatly improved the text. 

{\footnotesize
\bibliography{sample}}

\bibliographystyle{abbrv}

\end{document}